\newtheorem{theorem}{Theorem}[section]
\newtheorem{proposition}[theorem]{Proposition}
\newtheorem{lemma}[theorem]{Lemma}
\newtheorem*{corollary*}{Corollary}
\theoremstyle{definition}
\newtheorem{definition}[theorem]{Definition}
\newtheorem{example}[theorem]{Example}
\newtheorem{question}[theorem]{Question}
\theoremstyle{remark}
\newtheorem{remark}[theorem]{Remark}
\newtheorem{observation}[theorem]{Remark}
\newtheorem{notation}[theorem]{Notation}
\newcommand{\N}{\mathbf N}
\newcommand{\Z}{\mathbf Z}
\newcommand{\K}{\mathbf K}
\newcommand{\R}{\mathbf{R}}
\newcommand{\bR}{\mathbf{R}}
\newcommand{\GG}{\mathcal G}
\renewcommand{\L}{\mathcal{L}}
\newcommand{\im}{\text{im}}
\newcommand{\ra}{\rightarrow}
\title{	Hochschild homology, and a persistent approach via connectivity digraphs}
\author{Luigi Caputi\footnote{School of Natural and Computing Sciences, University of Aberdeen, UK --
E-mail: luigi.caputi@abdn.ac.uk}, Henri Riihim\"{a}ki \footnote{Department of Mathematics, KTH, Stockholm, Sweden -- Email: henrir@kth.se}}
\begin{document}

\maketitle

\begin{abstract}
    We introduce a \emph{persistent Hochschild homology} framework for directed graphs. Hochschild homology groups of (path algebras of) directed graphs vanish in degree~$i\geq 2$. To  extend them to higher degrees,  we introduce the notion of \emph{connectivity digraphs} and analyse two main examples; the first, arising from  Atkin's \(q\)-connectivity, and the second, here called \emph{$n$-path digraphs}, generalising the classical notion of line graphs. Based on a categorical setting for persistent homology, we propose a stable pipeline for computing persistent Hochschild homology groups. This pipeline is also amenable to other homology theories; for this reason, we complement our work with a survey on homology theories of digraphs.
    \end{abstract}

Graph homology -- Persistent homology -- Hochschild homology -- Connectivity graphs -- \(n\)-path digraphs -- \(q\)-connectivity

\section*{Introduction}
Directed graphs, or digraphs, organise a multitude of mathematical objects and physical phenomena, in particular where an inherent directionality plays a considerable r\^ole. Prominent examples motivating this paper come from structural brain networks, \emph{i.e.},~(directed) networks modelling the synaptic connectivity in the brain; here, the pre- and post-synaptic signal propagation induces directions between neurons. This type of application in neuroscience has particularly ignited the interest in applied topology and topological data analysis (TDA), together with the subsequent development of computational tools~\cite{L_tgehetmann_2020,Reimann_2017}. For an application in classifying network (brain) dynamics, see the recent work~\cite{Tribes_math}. One of the main techniques adopted in TDA is persistent homology (PH), which has been employed not just in neuroscience and neuroimaging~\cite{neuro, chung, khalid, kuang, lee-discriminativePH,wong}, but also in fields like endoscopy analysis~\cite{DUNAEVA201613}, angiography~\cite{angio}, pulmonary diseases~\cite{Brodzki}, finance~\cite{gidea},  fingerprint classification~\cite{giansiracusa},  image classification~\cite{vmv.20171272}, to name a few. Based on a robust mathematical theory and being stable with respect to small noise  perturbations~\cite{stabilityph}, persistent homology has provided a novel qualitative and quantitative way to study complex data (in the form of either point clouds, time series or connectivity networks) via the associated (directed) graphs. 

 In concrete applications, the classical persistence homology  pipeline takes as input a filtered family of undirected graphs; in order to extend it to the directed framework, one needs suitable (co-)homology theories of directed graphs. 
In a primal approach, the pipeline would then be the following: one starts by constructing suitable simplicial complexes (the \emph{directed flag complex}~\cite{masulli-villa, Reimann_2017}) associated to the digraphs, computes (most often homological) features of the simplicial complexes, and finally uses these, or derived features, for subsequent network analysis. The implementation of the described pipeline is usually possible, thanks to existing softwares and algorithms allowing to carry on the homological computations; \emph{e.g.}, the software Flagser~\cite{L_tgehetmann_2020} has a persistence implementation with directed flag complexes (when the edges of the graphs are assigned some weighting). 
We review the construction of the directed flag complex  in Section~\ref{homologies}.  Even if the directed flag complexes are constructed using coherently oriented cliques in the digraphs, 
the computation of the associated homology groups reduces to simplicial homology. This has the effect that the associated topological invariants forget some  information carried in the directions of the edges; to see it, we present two basic homotopy equivalent spaces induced by different digraphs. Besides the directed flag complex approach, many other homology theories of digraphs have been used. In Section~\ref{homologies} we  review some of them. Among others, we give a review of the recently developed path homology~\cite{Grigoryan_first} -- see~Section~\ref{secpathhom} and \cite{chaplin2021betti} for a comparison with the directed flag complex of random graphs. We note here that the pitfall of path homology is that, beyond dimension~$2$, path homology groups  are  difficult to compute. 

A third remarkable approach uses Hochschild homology (\(HH\)), a homology theory of associative algebras introduced by Hochschild~\cite{MR11076} -- \emph{cf.}~Section~\ref{HH}. There is a standard and coherent way of associating to a directed graph an associative algebra, called the \emph{path algebra}. Then, application of Hochschild homology to path algebras of digraphs provides additional homological invariants. However, in Section~\ref{sec:whyt}, we show that all the described approaches cannot tell apart further simple examples of different digraphs.
This then begs the question about what an appropriate homology theory for digraphs should be, and how to incorporate the directed combinatorics in the theory and applications. The aim of this paper is to show that Hochschild homology of path algebras is able to capture part of the combinatorial information. Path algebras, in fact, naturally arise  from the combinatorics of the directed paths in the digraphs, and  Hochschild homology is able to coherently capture such information. If the digraph has an oriented cycle, the path algebra is automatically infinite-dimensional. In the case of acyclic digraphs instead, the dimensions of the Hochschild homology groups, seen as vector spaces over a base field, can be laid down in a simple combinatorial formula. However, also the Hochschild homology of digraphs has some shortcomings. First, the digraph is assumed to be acyclic, which rarely is the case in real networks. Second, the Hochschild homology groups vanish beyond degree~$1$, and one retrieves no information beyond dimension~$1$ of the digraph itself. 

To remedy the issues raised above, we propose in this paper the following approach: a persistent homology framework applied to a filtered family of digraphs taking into account higher orders of intrinsic connectivities. The main points of the paper follow the scheme below:

\textbf{From digraphs to connectivity digraphs.} 
With the twofold aim of providing and extending persistent homology pipelines  to higher degrees, and of capturing combinatorial information intrinsic in the directed structure of the digraphs, in this work we introduce the notion of a \emph{connectivity digraph} -- cf.~Definition~\ref{def:conn digraphs}. For a digraph $\GG$, a connectivity graph associated to~$\GG$ is a graph, possibly directed,  constructed by using the combinatorics of $\GG$. For example, connectivity graphs can be described by edges, paths, sets of edges, or more generally cliques, together with their incidence relations.
In Section~\ref{sec:connectivity_homology} we present two new connectivity structures for simplices. The \emph{directed \(q\)-analysis} extended from the work of Atkin \cite{Atkin1972} connects simplices \(\sigma\) and \(\tau\), both of dimension \(\geq q\), if the is a \(q\)-dimensional face \(\alpha\) such that \(\widehat{d_i}(\sigma) \hookleftarrow \alpha \hookrightarrow \widehat{d_j}(\tau)\), where \(\widehat{d_i}\) is an extended face map (see Definition \ref{def:di_hat}). The \emph{\(n\)-path digraph} connects \(n\)-simplices \(\sigma\) and \(\tau\) if there is an \((n-1)\)-simplex \(\alpha\) such that \(d_i(\sigma) = \alpha = d_j(\tau)\) with \(i < j\), where \(d_i\) is the standard face map. 
Both the above connectivity relations define connectivity digraphs.

{\textbf{Hochschild homology for acyclic graphs.} 
Application of homologies (simplicial homology, path homology, Hochschild homology) to connectivity digraphs  extends the family of homological invariants of digraphs to each $n\in \N$. In particular, applying Hochschild homology on connectivity digraphs enables us to admit Hochschild homology groups from degree 1 to \(n\), where \(n\) now refers to the dimension of simplices appearing in our connectivity digraphs. A  convenient computation of the dimension of Hochschild homology only applies when restricting to acyclic digraphs. In general, this fails to be true, also for our connectivity digraphs. To be able to compute Hochschild-related invariants for general digraphs, we define the \emph{Hochschild characteristic} (Definition~\ref{HHX}) which adds to the acyclic formula the component coming from the vector space generated by the simple cycles in the digraph.}

{\textbf{Stable persistent Hochschild homology.} In the case of acyclic digraphs the transformation from the category of digraphs, through connectivity structures, into finite Hochschild homology vector spaces is functorial. Then, for a given filtration of digraphs, one can apply the usual pipeline to get persistent Hochschild homology groups of digraphs. The categorical framework developed by  Bergomi and Vertechi~\cite{vertechi} provides immediately the needed abstract stability theorems.} For a filtration~$\mathcal{F}$ in the category~$\mathbf{Digraph}_0$ of digraphs without oriented cycles, the persistent Hochschild pipeline can be illustrated as the following composition of functors:
\[
(\bR, \leq) \xrightarrow{\mathcal{F}}\mathbf{Digraph}_0\xrightarrow{\mathcal{C}}\mathbf{Digraph}_0\xrightarrow{\K-} \K\text{-}\mathbf{Alg}\xrightarrow{\mathrm{HH}_1}\mathbf{FinVect} \ .
 \]
where $\mathcal{C}$ is any functorial construction of connectivity digraphs -- cf.~Section~\ref{sec:PHH} and \ref{secHHpipe}. 

\textbf{Computational \(HH\) and \(K\)-theory into applied topology toolbox.} As far as the authors know, this work is the first in bringing invariants from Hochschild homology into the realm of TDA, to be used as featurisations of common data objects. Natural extensions would be using cyclic homology theories and $K$-theoretic methods. Previous work has focused on computing the \(K\)-theory of the category of zig-zag persistence modules~\cite{GradySchecnfisch}. In a subsequent work we plan to investigate the extension in this $K$-theoretic directions as well. 

\vspace{0.2cm}

We hope with this work to raise the interest of current developments in applied topology and persistence to focus more on trying to bring unexplored tools from theoretical algebraic topology and \(K\)-theory into the applied setting. Based on our work, we believe that a following recipe is useful. First, a fruitful combination of data and invariants needs to be found. For us, this was the combinatorics of digraphs and the path algebras they generate. Standard simplicial persistence is very much made for finite metric spaces, which we feel is not a natural pairing for digraphs as combinatorial objects without any regard on metric issues. Second, the invariant needs to be computable. In our case, this comes from the known combinatorial formula for the dimensions of Hochschild homologies. Going further, one needs to focus algebraic derivations on proving similar results when these are not yet existing. Third, the main lesson from persistence theory are the stability results. Any new tool in the applied topology toolbox should take into account that small variations in the input data need to be bounded at the level of algebraic invariants and featurisations. As said above, our pipeline satisfies certain stability guarantees.

{We finish with some perspective on the apparent simplicity of digraphs. Even though graphs and digraphs are simple objects to describe and many concepts in graph theory are rather easy and intuitive to handle, it is the immense possibilities of putting together vertices and edges that gives rise to the actual complexity of graphs/digraphs. One then needs to find an appropriate balance between the objects described and complexity and information content of the invariants attached to them. As a fourth point in the recipe of the previous paragraph, in applications some level of interpretability of the invariants is desired. In standard persistence it is easy to give a geometric meaning to the generators of a barcode. In the case of Hochschild homology it is still maintainable to understand how the information of directed paths is captured. The question of interpretability is of course conditioned on the application domain. For example, in medical applications the outcomes of TDA analyses should have some meaning to, say, prognosis and diagnosis. In machine learning applications topological/algebraic invariants can be accepted more as a black box featurisations.}

\subsection*{Organisation of the paper}
We start in Section~\ref{sec:grandcpx} with recalling basic notions about directed graphs, simplicial complexes and homology theory. In Section~\ref{homologies}, we give an overview of the most employed homology theories of directed graphs: the homology of the directed flag complexes, path homology, Hochschild homology, and homology of categories. In Section~\ref{sec:connectivity_homology}, we introduce the connectivity digraphs. We analyse the \(q\)-connectivity digraphs first, and then the $n$-path digraphs. Connectivity digraphs are then used in Section~\ref{sec:extendedhomoloies} for the Hochschild homology pipeline.

\subsection*{Conventions}

Calligraphic font, as~$\GG$, is used to denote \emph{finite} graphs (both directed and undirected). 
All base rings are assumed to be unital and commutative, and algebras are assumed to be unital and associative. Unless otherwise stated, $R$ denotes a ring, $\K$ is an algebraically closed field, $A$ is a unital associative $R$-algebra, $V$ is a vector space over $\K$, and all tensor products  $\otimes$ are assumed to be over the base ring~$R$ or base field $\K$. 
General references for graph theory, category theory, and algebraic topology are \cite{West}, \cite{maclane:71}, and \cite{hatcher}, respectively.

\subsection*{Acknowledgements}
The authors  acknowledge support from the \'{E}cole Polytechnique F\'{e}d\'{e}rale de Lausanne via a collaboration agreement with the University of Aberdeen and wish to thank Ran Levi for his support and useful discussions. Henri Riihim\"{a}ki acknowledges partial support from the KTH Royal Institute of Technology in Stockholm.

\section{Graphs and complexes}\label{sec:grandcpx}
In this section we review and fix some basic notions related to graphs and simplicial complexes, needed in the follow-up. 

\subsection{The category of digraphs}

A \emph{graph} is pair \(\GG = (V,E)\) consisting of a set of vertices \(V\) and a relation \(E \subseteq [V]^2\). The relation \(E\) is the set of edges between vertices and we denote the edges by pairs \(\{v,w\}\). We are interested in graphs with oriented edges:

\begin{definition}\label{def:digraph}
	A \emph{directed graph}, or a \emph{digraph}, is pair \(\GG = (V,E)\) consisting of a set of vertices \(V\) and a subset \(E \subseteq (V \times V)/\Delta_V\), where \(\Delta_V = \{(v,v) \, | \, v \in V\}\). The subset \(E\) is the set of directed edges and we denote edges by ordered pairs \((v,w)\).
\end{definition}

In this work, unless otherwise specified, graphs and digraphs will always be finite, hence the sets $V$ and $E$ are  finite.
Note that the definition above defines simple (di)graphs without loops: there are no edges of the form \(\{v,v\}\) nor \((v,v)\) and there is only one edge between any pair of vertices. In the case of digraphs, edges are unique ordered pairs \((v,w)\), and we allow reciprocal edges \((v,w)\) and \((w,v)\) in \(E\). We use the same symbol $\GG$ for denoting both an (undirected) graph and a digraph. In the rest of the paper we will mainly deal with digraphs, and we will always make clear whether we are referring to a graph or a directed graph.

\begin{definition}\label{def:compl and cliques}
	A graph is \emph{complete} if for every pair of vertices \(v\) and \(w\) there is an edge \(\{v,w\}\). A digraph is complete if for every pair of vertices \(v\) and \(w\) there are both edges \((v,w)\) and \((w,v)\). A $k$-\emph{clique} of $\GG$ is a complete subgraph of $\GG$ on $k$ vertices.
\end{definition}

Directed graphs come equipped with \emph{source} and \emph{target} maps \(s,t\colon E\to V\). For an edge~\(e=(v,w)\), the function~\(s\) maps \(e\) to its source, \(s(e)=v\), and \(t\) to its target, \(t(e)=w\). Sometimes, when we want to specify the source and target maps, we denote a digraph as  \(\GG=(V,E,s,t)\). 

Graphs and digraphs have natural notions of morphisms between them; we spell it out in the case of digraphs. 

\begin{definition}\label{defmorgr}
	A \emph{morphism of digraphs} from  \(\GG_1 = (V_1,E_1)\) to  \(\GG_2 = (V_2,E_2)\) is a function $$\phi\colon V_1\to V_2$$ on the vertices such that $(\phi(v),\phi(w))\in E_2$ for every $(v,w)$ belonging to $ E_1$. \end{definition}

Observe that, by Definition~\ref{defmorgr}, a morphism of digraphs sends directed edges to directed  edges and it does not allow to collapse them. One can also consider  functions~$\phi\colon V_1\to V_2$ on the vertices  such that either $\phi(v)=\phi(w)$ or $(\phi(v),\phi(w))\in E_2$; we refer to these maps as \emph{maps of digraphs}. Finite digraphs and edge preserving morphisms of digraphs  form a category that we denote by $\mathbf{Digraph}$. 

\begin{observation}\label{obsmorgr}
	A morphism of digraphs from \(\GG_1\) to  \(\GG_2\) sends complete subgraphs of \(\GG_1\) to complete subgraphs of \(\GG_2\), hence cliques to cliques. Indeed, otherwise a morphism would collapse at least one of the edges in the clique, which is not allowed.
\end{observation}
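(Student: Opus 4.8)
The plan is to unwind the definitions of clique and of morphism and to exploit the fact that digraphs in this paper carry no self-loops, i.e.\ $E_2 \subseteq (V_2 \times V_2)/\Delta_{V_2}$. Fix a morphism $\phi\colon \GG_1 \to \GG_2$ and let $S = \{v_1,\dots,v_k\} \subseteq V_1$ span a $k$-clique, so that by Definition~\ref{def:compl and cliques} both ordered pairs $(v_i,v_j)$ and $(v_j,v_i)$ lie in $E_1$ for all $i \neq j$. The goal is to show that $\phi(S)$ spans a $k$-clique of $\GG_2$, which amounts to two things: that $\phi$ does not identify any two vertices of $S$ (so that $\phi(S)$ again has $k$ vertices), and that every pair of vertices of $\phi(S)$ is joined by edges in both directions.

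First I would establish injectivity of $\phi$ on $S$, which is precisely the statement that no edge of the clique is collapsed. Take $i \neq j$; since $(v_i,v_j) \in E_1$ and $\phi$ is a morphism, we have $(\phi(v_i),\phi(v_j)) \in E_2$ by Definition~\ref{defmorgr}. But $E_2$ is disjoint from the diagonal $\Delta_{V_2}$, so the pair $(\phi(v_i),\phi(v_j))$ cannot have equal entries; hence $\phi(v_i) \neq \phi(v_j)$. This is exactly the point flagged in the observation: a morphism cannot collapse an edge, because doing so would produce a forbidden diagonal element of $E_2$.

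It then remains to verify completeness of the image. For any two distinct vertices of $\phi(S)$, write them as $\phi(v_i)$ and $\phi(v_j)$ with $i \neq j$ (possible by the injectivity just proved). Completeness of $S$ gives $(v_i,v_j) \in E_1$ and $(v_j,v_i) \in E_1$, and applying $\phi$ yields $(\phi(v_i),\phi(v_j)) \in E_2$ and $(\phi(v_j),\phi(v_i)) \in E_2$. Thus every pair of vertices of $\phi(S)$ is joined in both directions, so $\phi(S)$ is a complete subgraph on $k$ vertices, that is, a $k$-clique.

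The argument is entirely formal once the conventions are in place, so there is no substantial obstacle; the only thing to be careful about is the injectivity step. Completeness of the image is immediate from edge-preservation, but whether the image is a clique of the \emph{same size} hinges on the no-self-loop convention for $E_2$. If one worked with digraphs allowing loops, a morphism could in principle fold two clique vertices together, and the conclusion would have to be weakened to sending $k$-cliques to cliques of size at most~$k$. Flagging this dependence on the convention is, in my view, the one point worth making explicit in the write-up.
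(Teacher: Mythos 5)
Your proof is correct and is essentially the paper's own argument spelled out in full: the paper's one-line justification ("a morphism would otherwise collapse an edge, which is not allowed") is exactly your injectivity step via the no-self-loop convention $E_2\subseteq (V_2\times V_2)/\Delta_{V_2}$, combined with edge-preservation for completeness of the image. Your closing remark about how the conclusion would weaken if loops were permitted is a fair observation but not a deviation from the paper's reasoning.
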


One can consider also more restrictive morphisms, namely morphisms of digraphs that are also injective (as functions of vertices). We will refer to these morphisms as \emph{regular morphims} of digraphs and denote the resulting category of digraphs (possibly with loops) and regular morphisms by $\mathbf{RegDigraph}$.

\begin{observation}\label{rem:initobj}
	Both the categories  $\mathbf{Digraph}$ and $\mathbf{RegDigraph}$ have an initial object\footnote{An initial object  in a category $\mathbf{C}$ is an object $I$ such that, for each object 
	$C$ of $\mathbf{C}$, there is a unique morphism $I\to C$.}  $\emptyset$, the empty digraph. Note that this is not a terminal object.
\end{observation}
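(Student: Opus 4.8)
The plan is to verify directly that the empty digraph $\emptyset = (\emptyset, \emptyset)$ satisfies the defining universal property of an initial object in each of the two categories, and then to exhibit a single object witnessing the failure of the dual (terminal) property. The whole argument is an elementary transport of the facts that $\emptyset$ is initial but not terminal in $\mathbf{Set}$, carried through the assignment $\GG \mapsto V$ on vertex sets, so I do not expect a genuine obstacle; the only points needing care are recording that the edge condition holds vacuously and that passing to injective morphisms preserves both existence and uniqueness.

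First, for initiality in $\mathbf{Digraph}$, I would fix an arbitrary digraph $\GG = (V,E)$. By Definition~\ref{defmorgr}, a morphism $\emptyset \to \GG$ is a function $\phi$ from the vertex set of $\emptyset$, which is empty, to $V$. There is exactly one such function, the empty function, and any two functions out of the empty set agree, so uniqueness is automatic. The edge-preservation requirement that $(\phi(v),\phi(w)) \in E$ for all edges $(v,w)$ of $\emptyset$ is satisfied vacuously, since $\emptyset$ has no edges. Hence there is a unique morphism $\emptyset \to \GG$, establishing initiality.

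For $\mathbf{RegDigraph}$, I would observe that the same empty function is injective (vacuously), hence is a regular morphism, and it remains the unique such morphism $\emptyset \to \GG$: restricting to a smaller class of morphisms can neither create new morphisms nor destroy uniqueness. So $\emptyset$ is initial in $\mathbf{RegDigraph}$ as well. Finally, to see that $\emptyset$ is not terminal, I would exhibit one digraph admitting no morphism \emph{to} $\emptyset$: take $\GG_0$ to be the digraph with a single vertex and no edges. A morphism $\GG_0 \to \emptyset$ would be a function from a nonempty set to the empty set, of which there are none, so the set of such morphisms is empty rather than a singleton. This makes the terminal property fail in both categories by the same argument, completing the observation.
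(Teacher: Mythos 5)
Your proof is correct and follows exactly the standard verification the paper leaves implicit (the observation is stated without proof): the empty function is the unique, vacuously edge-preserving and vacuously injective morphism out of $\emptyset$, and the one-vertex digraph admits no morphism into $\emptyset$, so $\emptyset$ is initial but not terminal in both categories.
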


\begin{figure}[h]
	\centering
	\newdimen\R
	\R=2.0cm
	\begin{tikzpicture}
		\draw[xshift=5.0\R, fill] (270:\R) circle(.05)  node[below] {$v_n$};
		\draw[xshift=5.0\R,fill] (225:\R) circle(.05)  node[below left]   {$v_1$};
		\draw[xshift=5.0\R,fill] (180:\R) circle(.05)  node[left] {$v_2$};
		\draw[xshift=5.0\R,fill] (135:\R) circle(.05)  node[above left] {$v_3$};
		\draw[xshift=5.0\R, fill] (90:\R) circle(.05)  node[above] {$v_4$};
		\draw[xshift=5.0\R,fill] (45:\R) circle(.05)  node[above right] {$v_5$};
		\draw[xshift=5.0\R,fill] (0:\R) circle(.05)  node[right] {$v_6$};
		\draw[xshift=5.0\R,fill] (315:\R) circle(.05)  node[below right] {$v_{n-1}$};
		
		\node[xshift=5.0\R] (v0) at (270:\R) { };
		\node[xshift=5.0\R] (v1) at (225:\R) { };
		\node[xshift=5.0\R] (v2) at (180:\R) { };
		\node[xshift=5.0\R] (v3) at (135:\R) { };
		\node[xshift=5.0\R] (v4) at (90:\R) { };
		\node[xshift=5.0\R] (v5) at (45:\R) { };
		\node[xshift=5.0\R] (v6) at (0:\R) { };
		\node[xshift=5.0\R] (vn) at (315:\R) { };
		
		\draw[thick, red, -latex] (v0)--(v1);
		\draw[thick, red, -latex] (v1)--(v2);
		\draw[thick, red, -latex] (v2)--(v3);
		\draw[thick, red, -latex] (v3)--(v4);
		\draw[thick, red, -latex] (v4)--(v5);
		\draw[thick, red, -latex] (v5)--(v6);
		\draw[thick, red, -latex] (vn)--(v0);

		\draw[xshift=5.0\R, fill] (292.5:\R) node[below right] {$e_{n-1}$};
		\draw[xshift=5.0\R,fill] (247.5:\R) node[below left] {$e_n$};
		\draw[xshift=5.0\R,fill] (202.5:\R)   node[left] {$e_1$};
		\draw[xshift=5.0\R,fill] (157.5:\R)  node[above left] {$e_2$};
		\draw[xshift=5.0\R, fill] (112.5:\R)   node[above] {$e_3$};
		\draw[xshift=5.0\R,fill] (67.5:\R) node[above right] {$e_4$};
		\draw[xshift=5.0\R,fill] (22.5:\R) node[right] {$e_5$};
		\draw[xshift=4.95\R,fill] (337.5:\R)  node {$\cdot$} ;
		\draw[xshift=4.95\R,fill] (333:\R)  node {$\cdot$} ;
		\draw[xshift=4.95\R,fill] (342:\R)  node {$\cdot$} ;
	\end{tikzpicture}
	\caption{The coherently oriented cyclic digraph $C_n$.} 
	\label{fig:poly}
\end{figure}
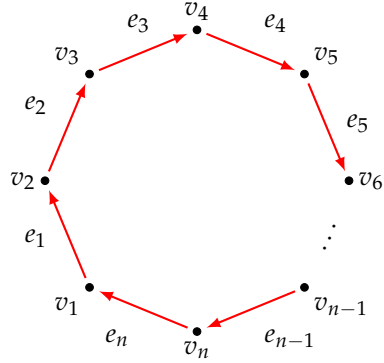

An oriented cycle in a directed graph $\GG$ is an embedding into $\GG$ of the coherently oriented cyclic digraph $C_n$ on \(n\) vertices -- \emph{cf.}~Figure~\ref{fig:poly}. In the follow-up, we might need to work with categories of digraphs without oriented cycles; we use the following notation:

\begin{notation}
We denote by  $\mathbf{Digraph_0}$  the subcategory of $\mathbf{Digraph}$  consisting of finite directed graphs without oriented cycles.
\end{notation}

A standard construction in graph theory is  the so called \emph{edge graph}, or \emph{line graph}, $\L(\GG)$ of a graph $\GG$. This is defined as the graph consisting of all the edges of $\GG$ as vertices, with connections described by the incidence relations. The construction generalizes to the case of digraphs -- see, for instance, \cite{Harary1960SomePO}:

\begin{definition}\leavevmode\label{deflinegr}
	\label{deflinedigr} The \emph{line digraph} of a directed graph $\GG=(V,E,s,t)$ is the directed graph $\L(\GG)$ whose vertices are the edges of $\GG$ and two vertices $p$ and $q$ in $\L(\GG)$ corresponding to the edges $e_p=(s(e_p), t(e_p))$ and $e_q=(s(e_q),t(e_q))$ in $\GG$ are connected by a directed edge $(p,q)\in E(\L(\GG))$ if $t(e_p)=s(e_q)$. 
\end{definition}

Associating a line (di)graph to a (di)graph is coherent with respect to morphisms:

\begin{observation}\label{linefunctor}
There is a functor 
	$$\mathcal{L}\colon \mathbf{Digraph}\to\mathbf{Digraph}$$ which sends a digraph $\GG$ to its line digraph $\L(\GG)$. 
First, note that the line digraph  $\L(\GG)$ of  a digraph~$\GG$ is the empty digraph $\emptyset$ if, and only if, $\GG$ has no edges. As by Remark~\ref{rem:initobj}, $\emptyset$ is an initial but not a terminal object in the category $\mathbf{Digraph}$, the functoriality may fail for morphisms $\phi\colon \GG_1\to \GG_2$, where $\GG_2$ has no edges. On the other hand, a morphism of digraphs sends edges to edges and collapsing is not allowed; therefore, either both $\GG_1$ and $\GG_2$ have no edges -- hence, the induced morphism between the associated line digraphs is the trivial morphism $\emptyset\to \emptyset$ -- or $\phi$ induces a morphism of digraphs $\L(\phi)\colon\L(\GG_1)\to\L(\GG_2)$ between the associated (non-empty) line digraphs. It is now straightforward to check that compositions and identities are preserved, hence $\mathcal{L}$ is a functor. 	
\end{observation}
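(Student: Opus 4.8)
The plan is to produce, from the object assignment $\GG \mapsto \L(\GG)$, a compatible assignment on morphisms and then verify the two functoriality axioms. The only genuinely delicate point, as the statement already anticipates, is the status of the empty digraph: by Remark~\ref{rem:initobj} the object $\emptyset = \L(\GG)$ (which occurs exactly when $\GG$ has no edges) is initial but not terminal, so I would first isolate and dispose of this degenerate case before defining $\L$ on morphisms between digraphs that do carry edges.

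First I would record a structural fact about $\mathbf{Digraph}$: there is no morphism $\phi\colon \GG_1 \to \GG_2$ whenever $\GG_1$ has at least one edge but $\GG_2$ has none. Indeed, by Definition~\ref{defmorgr} a morphism must send an edge $(v,w) \in E_1$ to a pair $(\phi(v),\phi(w)) \in E_2$, which is impossible if $E_2 = \emptyset$. Consequently the only morphisms whose target line digraph is $\emptyset$ have source line digraph also $\emptyset$, and they all map to the unique morphism $\emptyset \to \emptyset$; this is precisely the harmless degenerate situation flagged in the statement.

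Next, for a morphism $\phi\colon \GG_1 \to \GG_2$ between digraphs carrying edges, I would define $\L(\phi)$ on the vertices of $\L(\GG_1)$ — that is, on the edges of $\GG_1$ — by sending $e = (v,w)$ to $(\phi(v),\phi(w))$. Two inputs make this a well-defined vertex of $\L(\GG_2)$: the morphism condition of Definition~\ref{defmorgr} yields $(\phi(v),\phi(w)) \in E_2$, while the non-collapsing property of morphisms (Observation~\ref{obsmorgr}) gives $\phi(v) \neq \phi(w)$, so the image is a genuine edge of $\GG_2$, hence a vertex of $\L(\GG_2)$.

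Finally I would check that $\L(\phi)$ respects the adjacency of Definition~\ref{deflinedigr}. An edge $(p,q)$ of $\L(\GG_1)$ records the incidence $t(e_p) = s(e_q)$; applying $\phi$ gives $\phi(t(e_p)) = \phi(s(e_q))$, which is exactly the incidence condition defining an edge between $\L(\phi)(p)$ and $\L(\phi)(q)$ in $\L(\GG_2)$; the two images are moreover distinct, since their coincidence in some $f \in E_2$ would force $t(f) = s(f)$, contradicting that $\GG_2$ is loopless. Because $\L(\phi)$ is built by postcomposing the edge-labels with $\phi$, preservation of identities and of composition then follows immediately from the corresponding properties of $\phi$. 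I expect the only real obstacle to be the empty-digraph bookkeeping of the first two paragraphs; once that is settled, functoriality reduces to the routine componentwise verification above.
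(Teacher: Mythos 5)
Your proof is correct and follows essentially the same route as the paper: dispose of the empty-line-digraph case by noting that no morphism can map an edge-carrying digraph to an edgeless one, then let $\phi$ act on edges and verify that the incidence relation $t(e_p)=s(e_q)$ is transported to $\phi(t(e_p))=\phi(s(e_q))$. Your explicit check that $\L(\phi)(p)\neq\L(\phi)(q)$ (via looplessness of $\GG_2$) is a detail the paper leaves implicit, and it is a worthwhile addition since a morphism of digraphs is not permitted to collapse an edge.
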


{We will use a standard procedure for obtaining directed acyclic graphs out of a digraph.
\begin{definition}
	A \emph{strongly connected component} in a digraph \(\GG\) is an induced subgraph \(\GG'\) such that for any two vertices \(x\) and \(y\) in \(\GG'\) there are paths \(x \ra y\) and \(y \ra x\) in \(\GG'\).
\end{definition}
The strongly connected components are the equivalence classes of the relation of being strongly connected on the vertices of \(\GG\), i.e.~having directed paths between any ordered pair of vertices. The ensuing partition then enables to construct the quotient graph without directed cycles.
\begin{definition}\label{def:condensation}
	The \emph{condensation} \(c(\GG)\) of digraph \(\GG\) has as its vertices the strongly connected components of \(\GG\). Two vertices \(X\) and \(Y\) have a directed edge \((X,Y)\) in \(c(\GG)\) if there is an edge \((x,y)\) in \(\GG\) for some \(x \in X\) and \(y \in Y\).
\end{definition}

\begin{remark}\label{rem:condensation wo cycles}
	The {condensation} \(c(\GG)\) of a digraph \(\GG\) does not have oriented cycles.
\end{remark}
In particular, if \(\GG\) has the structure of a preorder, i.e.~a reflexive and transitive relation, the condensation is a canonical way of obtaining its underlying partial order \cite[Proposition 8.13]{Schroeder_book}.} Observe that taking the condensation of a digraph is not functorial; in fact, it may lead to maps $G\to *$, where $*$ is the one-point graph. However, in the theory of Alexandroff and finite topological spaces preorders and partial orders are in bijection with topological spaces and spaces with \(T_0\) separation, respectively. In this context, the condensation is a homotopy equivalence~\cite{Barmak_book}.

\subsection{Simplicial complexes and homology theories}\label{sec:scompandhom}

We recall here the definition of simplicial complexes;

\begin{definition}\label{def:abstract_simplicial complex}
	An \emph{(abstract) simplicial complex} on a vertex set \(V\) is a collection \(K\) of non-empty finite subsets \(\sigma \subseteq V\) that is closed under taking non-empty subsets: if \(\sigma \in K\) and \(\tau \subseteq \sigma\) is non-empty then \(\tau \in K\). The subsets are called simplices of \(K\).  
\end{definition}

The following list records notations related to simplices and simplicial complexes used in this paper.
\begin{center}
	\begin{tabular}{ c | p{8.5cm} }
		\hline
		Notation & Definition\\ \hline
		\(\sigma \in K\) & \(\sigma\) is a simplex in a simplicial complex \(K\). \\ \hline
		\(K_q\) & the set of simplices of \(K\) with dimension greater than or equal to \(q\). \\ \hline
		\(\text{Vert}(K)\) or $V(K)$, \(\text{Vert}(\sigma)\) & The sets of vertices of \(K\) and \(\sigma\), respectively. \\ \hline
		\(\text{dim}(\sigma)\) & \(|\text{Vert}(\sigma)| - 1\), dimension of \(\sigma\). If equal to \(k\), then \(\sigma\) is a \(k\)-simplex. \\ \hline
		\(\text{dim}(K)\) & The dimension of \(K\), the dimension of its highest dimensional simplex. \\ \hline
		\(\tau \subseteq \sigma\), \(\tau \hookrightarrow \sigma\) & Face of \(\sigma\). Faces are simplices. We use the convention that every simplex is a face of itself. Proper face has dimension strictly less than the dimension of the simplex.		
	\end{tabular}
\end{center}

Analogously to morphisms of graphs, we can define morphisms of simplicial complexes:

\begin{definition}
	A \emph{simplicial map} $f\colon K_1\to K_2$ between simplicial complexes $K_1$ and $K_2$ is a function on the vertices $f\colon V(K_1)\to V(K_2)$ such that $f(\sigma)\in K_2$ is a simplex, for every simplex $\sigma$ of $K_1$.
\end{definition}

(Abstract) simplicial complexes and simplicial maps form the category of simplicial complexes, that we denote by $\mathbf{SCpx}$. In this work we focus on homological invariants of directed graphs, and more precisely, on homology groups of digraphs. Homology groups are topological invariants of simplicial complexes. We assume that the reader is familiar with homology theories and we refer to \cite{hatcher, munkres} for comprehensive introductions. For setting the notations we briefly sketch here the main definition. We fix a commutative ring $R$.

\begin{definition}
	A \emph{chain complex} $(C,\partial)$ is a sequence $C=(C_n)_{n\in \N}$ of $R$-modules  with a boundary operator $\partial$  consisting of linear maps $\partial=(\partial_n\colon C_{n+1}\to C_{n})$ such that $\partial_{n}\circ \partial_{n+1}=0$ for all~$n$.  
\end{definition}

A \emph{morphism of chain complexes} $f\colon (C,\partial)\to (C',\partial')$ is a sequence of linear maps $f_n\colon C_n\to C_n'$ with the commutation relations $f_n \circ\partial_n = \partial_n' \circ f_{n+1}$. Chain complexes and morphisms of chain complexes over $R$ form a category~$\mathbf{Ch}(R)$, or more concisely $\mathbf{Ch}$.
For  a simplicial complex~$K$ and  a commutative ring $R$, there is a standard way to construct a chain complex~$(C,\partial)$ by considering, for each $n\in \N$,  the free $R$-module generated by the $n$-simplices of $K$. The construction gives a functor from the category~$\mathbf{SCpx}$ of simpicial complexes to the category~$\mathbf{Ch}$. Furthermore, for a chain complex $(C,\partial)$, the \emph{degree \(n\) homology} $\mathrm{H}_n(C)$ of $(C,\partial)$ is defined as the quotient
\[
\mathrm{H}_n(C)\coloneqq \ker(\partial_n)/\im(\partial_{n+1}),
\]
which is well-defined as $\im(\partial_{n+1})$ is contained in $\ker(\partial_n)$ by the identity $\partial_{n}\circ \partial_{n+1}=0$. If we want to indicate the coefficients \(R\) over which we are computing homology we write \(\mathrm{H}_n(C;R)\). The construction  gives functors from the category of chain complexes~$\mathbf{Ch}$ to the category~$\mathbf{Mod}_R$ of $R$-modules; hence, by composition, funtors from the category~$\mathbf{SCpx}$ to $\mathbf{Mod}_R$. In the next section we briefly recall how to construct, starting with a digraph, suitable simplicial complexes (flag complexes or path complexes) and  homology groups of digraphs.

\section{Homology theories of digraphs}\label{homologies}
In this section we survey some of the most prominent  homology theories of directed graphs. We start in Section~\ref{secflcompl} by recalling the definition of flag complexes and associated simplicial homology, then in Section~\ref{secpathhom} we review the path complexes and path homology, as introduced by Grigor’yan \textit{et al.}~\cite{Grigoryan_first}. In Section~\ref{secpathhom}, we provide a more detailed account on the Hochschild homology of a digraph, as of more relevance to us. Finally, in Section~\ref{sechomothers}, we sketch some  variations to these constructions, as has appeared in the literature. 

\subsection{Homology of flag complexes}\label{secflcompl}
Homology groups of undirected graphs can be defined as the simplicial homology groups of their underlying topological spaces; in fact, graphs can be seen as $1$-dimensional simplicial complexes, and one can directly apply the approach of Section~\ref{sec:scompandhom}. We first start with describing this naive approach, and then we see how  to generalize it by means of the so-called flag complexes.

For a digraph $\GG=(E,V)$ and a fixed commutative ring $R$, consider the chain complex
\begin{equation}\label{ordhomgr}
\cdots\xrightarrow{0}0\xrightarrow{0} \langle E\rangle_R\xrightarrow{\partial_1}\langle V\rangle_R\xrightarrow{0}0
\end{equation}
where $\langle E\rangle_R$ is the free $R$-module generated by the edges $E$ and $\langle V\rangle_R$ is the free $R$-module generated by the vertices $V$ of $\GG$. The boundary maps $\partial_i$ are all $0$, except for $\partial_1$; this acts on the basis edges in $\langle E\rangle_R$ as
\[
\partial_1(v,w)\coloneqq w-v,
\]
and is extended to the whole $R$-module $\langle E\rangle_R$ by $R$-linearity. The homology groups of a digraph defined this way are usually referred to  as the \emph{ordinary homology} groups of graphs. 

\begin{remark}
    The ordinary homology groups of graphs are trivial in every degree $i\geq 2$. 
\end{remark}

The $0$-th homology group of $\GG$ describes the set of connected components. The $1$-st homology group is isomorphic to the kernel of the only non-trivial map $\partial_1$, and counts the cycles of $\GG$; its rank can be entirely described in terms of numbers of vertices, edges and connected components of the digraph $\GG$ -- see for instance \cite[Theorem~1.9.6]{grdiestel}. 
A prominent approach to generate higher dimensional homology groups is to construct, out of a graph~$\GG$, the so-called flag (also known as clique) complexes~\cite{Aharoni,CHEN2001153,IVASHCHENKO1994159}; these provide natural invariants of graphs and have been generalized to digraphs~\cite{masulli-villa, Reimann_2017}, as we now recall.

We first need to introduce the ordered simplicial complexes. A set $S$, endowed with a linear order of its elements, will be called an \emph{ordered set}. 

\begin{definition}\label{def:osc}
An \emph{ordered simplicial complex} $\Sigma$  on a vertex set $V$ is a non-empty family of finite ordered subsets $\sigma\subseteq V$ with the property that, if $\sigma$ belongs to $\Sigma$ then every ordered subset $\tau$ of~$\sigma$ (ordered with the natural order induced by $\sigma$) belongs to $\Sigma$.
\end{definition}

When dealing with directed graphs, we need ordered cliques (as opposed to unordered cliques -- cf.~Definition~\ref{def:compl and cliques}.

\begin{definition}
     An \emph{ordered $k$-clique} of a directed graph $\GG$ is a totally ordered $k$-tuple $(v_1,...,v_k)$ of vertices of~$\GG$ with the property that, for every $i < j$, the pair $(v_i,v_j)$ is an ordered edge of $\GG$.
\end{definition}

 We can now extend the construction of flag complexes to directed graphs. 
 
 \begin{definition}\label{defFl}
 	Let $\GG=(V,E)$ be a directed graph. The \emph{directed flag complex} of $\GG$ is the ordered simplicial complex $\mathrm{dFl}(\GG)$ on $V$ whose $k$-simplices are all the ordered $(k+1)$-cliques of $\GG$.
 \end{definition}

We can again construct a chain complex. Let  $C_n(\mathrm{dFl}(\GG);R)$ be the $R$-module freely generated by all the $n$-simplices of $\mathrm{dFl}(\GG)$. There are well-defined face maps 
\begin{equation}
    d_j\colon C_n(\mathrm{dFl}(\GG);R)\to C_{n-1}(\mathrm{dFl}(\GG);R)
\end{equation}
for $j=0,\dots,n$. The $j$-th face map $d_j$, as an operator applied to the simplex $[v_0,\dots,v_n]$ in $C_n(\mathrm{dFl}(\GG);R)$ is defined by cancelling the $j$-th vertex: $$d_j[v_0,\dots,v_n]\coloneqq [v_0,\dots,\widehat{v_j}, \dots, v_n].$$ 
Face maps uniquely identify the faces of a simplex. In fact, let  $\sigma=[v_0,\dots,v_n]$ be an $n$-simplex  of the flag complex $\mathrm{dFl}(\GG)$; then, the  faces $d_j[v_0,\dots,v_n]$ are $(n-1)$-simplices of $\mathrm{dFl}(\GG)$. Each face map $d_j$ uniquely identifies the $j$-th $(n-1)$-face of $\sigma$ as the face opposite to the vertex $v_j$.
	For example, if $[v_0,v_1,v_2]$ is an ordered $3$-clique in a digraph $\GG$, represented below as an ordered simplex $\sigma$ of the associated directed flag complex $\mathrm{dFl}(\GG)$,
	\begin{center}
		\begin{tikzpicture}[baseline=(current bounding box.center)]
			\tikzstyle{point}=[circle,thick,draw=black,fill=black,inner sep=0pt,minimum width=2pt,minimum height=2pt]
		\tikzstyle{arc}=[shorten >= 8pt,shorten <= 8pt,->, thick]
		
		\node[] (0) at (-1.2,-1) {$v_0$};
		\node[] (2) at (1.2,-1) {$v_2$};
		\node[] (1) at (0,1) {$v_1$};
		
		\draw[arc] (-1.2,-1) to node[label=  left:$d_2(\sigma)$] {} (0,1) ;
		\draw[arc] (-1.2,-1) to node[label=  below:$d_1(\sigma)$] {} (1.2,-1);
		\draw[arc] (0,1) to node[label=  right:$d_0(\sigma)$] {} (1.2,-1);
		\end{tikzpicture}
	\end{center}
	 then we have $d_0(\sigma)=[v_1,v_2]$, $d_1(\sigma)=[v_0,v_2]$ and $d_2(\sigma)=[v_0,v_1]$.

\begin{remark}\label{rem:digraph_morphism_on_cliques}
The construction of flag complexes can be promoted to a functor from directed graphs to ordered simplicial complexes. Namely, if $\phi\colon \GG_1\to\GG_2$ is a morphism of digraphs, then it sends ordered cliques of $\GG_1$ to ordered cliques of $\GG_2$. This induces a simplicial morphism $f_{\phi}\colon \mathrm{dFl}(\GG_1)\to \mathrm{dFl}(\GG_2)$ between the flag complexes, sending a simplex $\sigma\in \mathrm{dFl}(\GG_1)$, hence an ordered clique $(v_0,\dots,v_k)$ of $\GG_1$, to the simplex $f_{\phi}(\sigma)=(f(v_0),\dots,f(v_k))$. 
\end{remark}

The (simplicial) homology groups with coefficients in \(R\) of a directed graph $\GG$ are defined as the  homology groups of the associated directed flag complex, and for each $n\in \N$ it can be seen as a composition of functors
\begin{equation}\label{eq:dFl}
\mathbf{Digraph} \xrightarrow{\mathrm{Ch}\circ\mathrm{dFl}} 
\mathbf{Ch}\xrightarrow{\mathrm{H}_n(-;R)} \mathbf{Mod}_R,
\end{equation}
where~$\mathbf{Mod}_R$ is the category  of $R$-modules.

\begin{example}\label{ex:square}
Fix as base ring the ring of integers $\Z$. 
 	Consider the square-shaped digraph $\GG$ on the vertices $0,1,2,3$ and set of edges described as in the following picture:
\begin{center}
\begin{tikzpicture}
	\tikzstyle{arc}=[shorten >= 3pt,shorten <= 3pt,->, thick]
	
	\node[] (0) at  (0,0) {0};
	\node[] (1) at  (0,1.5) {1};
	\node[] (2) at  (1.5,0) {2};
	\node[] (3) at  (1.5,1.5) {$3$};
	
	\draw[arc, red] (0) to (1);
	\draw[arc, red] (0) to (2);
	\draw[arc, red] (1) to (3);
	\draw[arc, red] (2) to (3);
	
	\node at (-0.65,0.75) {\(\GG=\)};
\tikzstyle{point}=[circle,thick,draw=black,fill=black,inner sep=0pt,minimum width=2pt,minimum height=2pt]
\node[point] at (4,0) {};
\node[point] at (5.5,0) {};
\node[point] at (4,1.5) {};	
\node[point] at (5.5,1.5) {};

\draw (4,0)  -- (5.5,0);		
\draw (4,0)  -- (4,1.5);
\draw (5.5,0)  -- (5.5,1.5);
\draw (4,1.5)  -- (5.5,1.5);

\node at (2.85,0.75) {\(\Rightarrow \ \mathrm{dFl}(\GG)=\)};						
\end{tikzpicture}
\end{center}
As we only have  ordered 2-cliques, the associated flag complex is the square itself, i.e.~an ordered simplicial complex with four vertices and four edges. It has homology groups  $$\mathrm{H}_0(\mathrm{dFl}(\GG);\Z)\cong \Z\cong\mathrm{H}_1(\mathrm{dFl}(\GG);\Z).$$ By adding to $\GG$ also the edge $(0,3)$, we get the ordered $3$-cliques $(0,1,3)$ and $(0,2,3)$; the corresponding flag complex is then the full square:
\begin{center}
	\begin{tikzpicture}
	\tikzstyle{arc}=[shorten >= 3pt,shorten <= 3pt,->, thick]
	
	\node[] (0) at  (0,0) {0};
	\node[] (1) at  (0,1.5) {1};
	\node[] (2) at  (1.5,0) {2};
	\node[] (3) at  (1.5,1.5) {$3$};
	
	\draw[arc, red] (0) to (1);
	\draw[arc, red] (0) to (2);
	\draw[arc, red] (1) to (3);
	\draw[arc, red] (2) to (3);
	\draw[arc, red] (0) to (3);
	
	\node at (-0.65,0.75) {\(\GG=\)};
	\tikzstyle{point}=[circle,thick,draw=black,fill=black,inner sep=0pt,minimum width=2pt,minimum height=2pt]
	\node[point] at (4,0) {};
	\node[point] at (5.5,0) {};
	\node[point] at (4,1.5) {};	
	\node[point] at (5.5,1.5) {};
	
	\draw[fill=green,opacity=0.25] (4,0) -- (5.5,0) -- (5.5,1.5) -- (4,1.5) --cycle; 
	\draw (4,0)  -- (5.5,0);		
	\draw (4,0)  -- (4,1.5);
	\draw (5.5,0)  -- (5.5,1.5);
	\draw (4,1.5)  -- (5.5,1.5);
	\draw (4,0)  -- (5.5,1.5);
	\draw[fill=green,opacity=0.25] (4,0) -- (5.5,0) -- (5.5,1.5) -- (4,1.5) --cycle; 
	
	\node at (2.85,0.75) {\(\Rightarrow \ \mathrm{dFl}(\GG)=\)};						
	\end{tikzpicture}
\end{center}
The first homology group $\mathrm{H}_1(\mathrm{dFl}(\GG);\Z)$ is now $0$, whereas the $0$-th homology group describing the number of connected components is again isomorphic to~$\Z$.
\end{example}

\begin{observation}\label{obs:flagsforgetdir}
From Example~\ref{ex:square} we see that some information from the orientations of the edges is lost. 
In fact, even though the simplices are constructed from the ordered cliques, the homology groups are defined as the homology groups of the (geometric realization of the) directed flag complex, which forgets some information about the directionalities. 
As an additional illustrative example, 
consider the digraphs in Figure~\ref{fig:possiblegraphs}. They have isomorphic ordinary homology groups, as well as isomorphic homology groups of the associated directed flag complexes. 
\begin{figure}[h]
    \centering
    \begin{tikzpicture}
	\tikzstyle{point}=[circle,thick,draw=black,fill=black,inner sep=0pt,minimum width=2pt,minimum height=2pt]
	\tikzstyle{arc}=[shorten >= 8pt,shorten <= 8pt,->, thick]
	
	\node[] at (-1.2,-1) {1};
	\node[] at (-1.6,0.5) {3};
	\node[] at (1.6,0.5) {4};
	\node[] at (1.2,-1) {2};
	\node[] at (0,1) {0};
	
	\draw[arc, red] (0,1) to (-1.6,0.5);
	\draw[arc, red] (-1.6,0.5) to (-1.2,-1);
	\draw[arc, red] (-1.2,-1) to (1.2,-1);
	\draw[arc, red] (1.6,0.5) to (0,1);
	\draw[arc, red] (1.2,-1) to (1.6,0.5);
	
	\node[] at (3.0,-1) {1};
	\node[] at (2.6,0.5) {3};
	\node[] at (5.4,-1) {2};
	\node[] at (5.8,0.5) {4};
	\node[] at (4.2,1) {0};
	
	\draw[arc, red] (4.2,1) to (2.6,0.5);
	\draw[arc, red] (4.2,1) to (5.8,0.5);
	\draw[arc, red] (3.0,-1) to (5.4,-1);
	\draw[arc, red] (2.6,0.5) to (3.0,-1);
	\draw[arc, red] (5.4,-1) to (5.8,0.5);
	\end{tikzpicture}
	\caption{Two possible directed configurations for pentagon graph.}
    \label{fig:possiblegraphs}
	\end{figure}
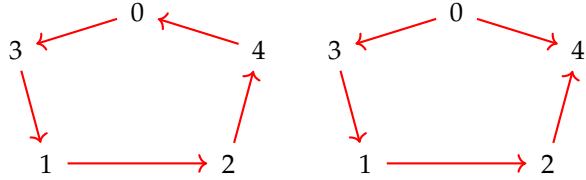
\end{observation}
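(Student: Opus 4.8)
The plan is to read off the two edge sets from Figure~\ref{fig:possiblegraphs} and to observe that, although the two digraphs differ at the level of directed combinatorics, they share the same underlying undirected graph. Concretely, the left-hand digraph has directed edges $0\to 3$, $3\to 1$, $1\to 2$, $2\to 4$, $4\to 0$, which form a single coherently oriented $5$-cycle, whereas the right-hand digraph has edges $0\to 3$, $0\to 4$, $3\to 1$, $1\to 2$, $2\to 4$ and is acyclic (vertex $0$ is a source and vertex $4$ is a sink). In both cases the underlying undirected graph is the pentagon $C_5$, with five vertices and five edges. This is the crux of the statement: the entire difference between the two digraphs lives in the orientation data, which is precisely what the two invariants below discard.

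For the ordinary homology I would argue directly from the chain complex~\eqref{ordhomgr}. Since $\partial_1(v,w)=w-v$ and both digraphs are connected with five vertices and five edges, the ranks of $\mathrm{H}_0$ and $\mathrm{H}_1$ are fixed by the numbers of vertices, edges and connected components via the first Betti number $E-V+1=1$ (cf.~\cite[Theorem~1.9.6]{grdiestel}), and all higher groups vanish. Hence $\mathrm{H}_0\cong\Z\cong\mathrm{H}_1$ with $\mathrm{H}_i=0$ for $i\ge 2$ for each digraph, so the two ordinary homologies are isomorphic.

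For the directed flag complexes I would first check that neither digraph contains an ordered $2$-clique. An ordered $2$-clique requires three vertices spanning a triangle in the underlying graph, but $C_5$ is triangle-free, so no such triples exist. Therefore, by Definition~\ref{defFl}, $\dfl{\GG}$ has simplices only in dimensions $0$ and $1$ and coincides, as a simplicial complex, with the $1$-dimensional complex underlying the pentagon, whose geometric realisation is homotopy equivalent to the circle $S^1$. Computing homology as in Section~\ref{sec:scompandhom} then gives $\mathrm{H}_0(\dfl{\GG};\Z)\cong\Z\cong\mathrm{H}_1(\dfl{\GG};\Z)$, with all higher groups trivial, for both digraphs; this yields the claimed isomorphism of directed flag complex homologies.

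The step that carries the conceptual weight — rather than any computational difficulty — is the first one: making explicit that one digraph is a genuine oriented cycle while the other is acyclic, so that the two are visibly non-isomorphic as objects of $\mathbf{Digraph}$, yet both homological invariants collapse this distinction. There is no real technical obstacle, since the homology computations are those of a $1$-dimensional complex and the orientation-independence of both invariants is immediate from their definitions. The content of the statement is exactly this failure to detect the cyclic/acyclic dichotomy, which is what motivates passing to Hochschild homology in the sequel.
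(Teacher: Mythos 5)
Your proof is correct and fills in exactly the argument the paper leaves implicit: both digraphs share the pentagon as underlying undirected graph, the ordinary homology of~\eqref{ordhomgr} depends only on vertices, edges and connected components, and the triangle-free pentagon forces both directed flag complexes to be $1$-dimensional and homotopy equivalent to $S^1$, so all the stated invariants coincide. One small terminological slip: by Definition~\ref{defFl} a $2$-simplex corresponds to an ordered $3$-clique (three vertices), not an ordered $2$-clique (which is just a directed edge, and both digraphs certainly contain those); your intended claim and the rest of the argument are unaffected.
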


Therefore, for constructing homology theories of digraphs more sensitive to the directionalities, one might need to incorporate the directed combinatorics in the definition of the homology groups. This is partially achieved with the homology theories recalled in the next subsections. 

\subsection{Path homology}\label{secpathhom}

Path homology can be considered as a homology theory of directed graphs, explicitly constructed from the edges of the digraphs. It was introduced in \cite{grigoryan2013homologies,Grigoryan_first}, and it has nowadays many developments. We recall its definition, following the first works on the subject.

\begin{definition}
Let $V$ be a finite set. For $p\in \N$, an \emph{elementary $p$-path} on $V$ is an ordered sequence $i_0\dots i_p$ of $p+1$ elements of $V$.
\end{definition}

Let $\K$ be a field (this assumption may be relaxed, but for the sake of reference, we use here the same assumptions as in \cite{grigoryan2013homologies}).
The vector space over $\K$ consisting of formal linear combinations of elementary $p$-paths is denoted by $\Lambda_p(V)$, or simply by $\Lambda_p$. The basis element of $\Lambda_p(V)$ corresponding  to the elementary $p$-path $i_0\dots i_p$ is denoted by $e_{i_0\dots i_p}$ and the elements of $\Lambda_p(V)$ are called \emph{paths}. The linear maps $\partial(e_{i_0\dots i_p})\coloneqq \sum_{q=0}^p (-1)^q e_{i_0\dots\widehat{i_q}\dots i_p}$ define the boundary operator $\partial\colon \Lambda_p\to\Lambda_{p-1}$, hence a chain complex \cite[Lemma~2.1]{Grigoryan_first}. 

\begin{definition}\cite[Definition~3.1]{Grigoryan_first}
	A \emph{path complex} over $V$ is a non-empty collection  $P$ of elementary paths on $V$ with the property that if $i_0\dots i_p$ belongs to $P$, then also $i_0\dots i_{p-1}$ and $i_1\dots i_p$ belong to $P$. The paths in $P$ are called \emph{allowed}.
\end{definition}

For a path complex $P$ on a set $V$, the vector space $\mathcal{A}_p(P)$ spanned by all the allowed $p$-paths from $P$ is a subspace of $\Lambda_p$. Define the subspace $\Omega_p(P)$ of $\mathcal{A}_p(P)$ as
\[
\Omega_p(P)\coloneqq \left\{v\in \mathcal{A}_p(P)\mid \partial v\in \mathcal{A}_{p-1}(P)\right\}.
\]
The elements of \(\Omega_p(P)\) are called the \emph{$\partial$-invariant} paths of $P$. Then the boundary operator $\partial$ restricts to a boundary operator on $\Omega_p(P)$~\cite[Section~3.2]{Grigoryan_first}, and provides a chain complex $(\Omega_n(P),\partial)$. 

\begin{definition}
The \emph{path homology} groups $\mathrm{PH}_n(P)$ of the path complex $P$ are the homology groups of the chain complex $(\Omega_n(P),\partial)$. 
\end{definition}

To every directed graph there is an associated path complex \cite[Ex.~3.3]{Grigoryan_first}. If $\GG=(V,E)$ is a digraph, an elementary $p$-path $i_0\dots i_p$ is allowed if $(i_{k-1},i_k)\in E$ for all $k=1,\dots,p$. The set of allowed $p$-paths on $\GG$ is denoted by $P_p(\GG)$; note that  $P_0(\GG)=V$, and $P_1(\GG)=E$. Then, the union $P(\GG)\coloneqq \bigcup_n P_n(\GG)$ is a path complex. In particular, if $\GG$ is a digraph, then the path homology of $\GG$ is defined as the path homology of the path complex $P(\GG)$, after restricting to the $\partial$-invariant paths. 

\begin{remark}
It has been shown that path homology satisfies nice functorial properties: for a morphism of digraphs  $f\colon \GG_1\to\GG_2$ one gets a homomorphism $f_*\colon\mathrm{PH}_*(\GG_1) \to \mathrm{PH}_*(\GG_1)$ between the associated path homology groups  \cite[Theorem~2.10]{grigoryan2014homotopy}. Furthermore, it has been shown that path homology satisfies K{\"u}nneth formulas \cite{grigor2017homologies} and analogous properties to the classical Eilenberg-Steenrod axioms \cite{Grigorianstax}.
\end{remark}

The following example illustrates a computation of the path homology groups of the same digraph as in Example~\ref{ex:square}.

\begin{example}\label{ex:sqfrp}
We return to the the square-shaped digraph $\GG=(V,E)$ on the vertices $0,1,2,3$ introduced in Example~\ref{ex:square}. The path complex $P(\GG)$ associated to $\GG$ is the complex with elementary allowed paths  as depicted below:
\begin{center}
	\begin{tikzpicture}
	\tikzstyle{arc}=[color=blue,shorten >= 3pt,shorten <= 3pt,->, thick]

	\node[] (0) at  (0,0) {0};
	\node[] (1) at  (0,1.5) {1};
	\node[] (2) at  (1.5,0) {2};
	\node[] (3) at  (1.5,1.5) {3};
	
	\draw[arc] (0) to (1);
	\draw[arc] (0) to (2);
	\draw[arc] (1) to (3);
	\draw[arc] (2) to (3);
	
	\node[right] at (3,1.5) {\(P_0=\{0,1,2,3\}=V\)};
	\node[right] at (3,0.75) {\({\color{blue}P_1=\{01, 02, 13, 23\}=E}\)};
	\node[right] at (3,0) {\({\color{magenta}P_2=\{013,023\}}\)};
	
	\draw[magenta,->] (-0.2,0.2) .. controls (-0.4,1.9) and (-0.4,1.9) .. (1.3,1.7);
	\draw[magenta,->] (0.2,-0.2) .. controls (1.9,-0.4) and (1.9,-0.4) .. (1.7,1.3);			
	\end{tikzpicture}
\end{center}
All the other $P_n$ are empty. As the edge $(0,3)$ does not belong to $\GG$, the paths $e_{013}$ and $e_{023}$ are not $\partial$-invariant. However, the linear combination $e_{013}-e_{023}$ would be, as \[\partial(e_{013}-e_{023})=e_{13}-e_{03}+e_{01}-e_{23}+e_{03}-e_{02}=e_{13}+e_{01}-e_{23}-e_{02}.\]
Therefore, if $\K$ is a field, then $\Omega_0=\langle e_0,e_1,e_2,e_3\rangle$, 
$\Omega_1=\langle e_{01}, e_{02}, e_{13}, e_{23} \rangle$ and $\Omega_2=\langle e_{013}-e_{023} \rangle$. The associated path homology of $\GG$ is then $\mathrm{PH}_0(P(\GG))\cong \K$ and it is $0$ in higher dimensions. This can be shown by direct computation from the chain complex $(\Omega_n(P),\partial)$, or by using the fact that the square digraph is contractible \cite[Example~3.13]{grigoryan2014homotopy}.
Observe that if we add the edge $(0,3)$ to $\GG$ the spaces $\Omega_1$ and $\Omega_2$ become $\Omega_1=\langle e_{01}, e_{02}, e_{13}, e_{23}, e_{03} \rangle$ and $\Omega_2=\langle e_{013},e_{023} \rangle$, but the path homology groups do not change. 
\end{example}

 Despite being a very interesting theory, path homology is not easily computable in higher degrees. As far as the authors know, there is no efficient algorithm for computing path homology groups in degree $i\geq 2$; for an algorithm in degree~$1$, see for instance~\cite{dey2020efficient}. 

\subsection{Hochschild homology of path algebras}
\label{HH}

Hochschild (co-)homology, introduced by Hochschild \cite{MR11076}, is a natural invariant of associative, not necessarily commutative, unital  algebras.  An investigation of this homology theory is beyond the purposes of this work, and we refer to \cite{loday} for a general and comprehensive overview on the subject. To each digraph $\GG$, we associate an  algebra~$R\GG$, called the \emph{path algebra} -- see Definition \ref{defpathalg}. The path algebra is associative and, for finite digraphs, also unital. Therefore, Hochschild homology groups of $R\GG$ give (algebraic) invariants of digraphs.  Goal of this section is to review some properties of Hochschild (co-)homology, and to introduce a new characteristic of digraphs (Definition~\ref{HHX}).

\subsubsection{Hochschild homology}
We first recall the definition of Hochschild homology, following \cite[Section~1.1]{loday}. For a commutative ring $R$, let  $A$ be an associative unital $R$-algebra; for example, $A$ can be a polynomial algebra over~$R$. For a bimodule\footnote{A bimodule over the algebra $A$ is an $R$-module endowed with an action of $A$ both on the left and on the right, such that $(am)a'=a(ma')$ for all $a, a'\in A$, and $m\in M$. The actions are compatible, and if $A$ is unital, the unit acts as the identity.} $M$ over $A$, let $C_n(A,M)$ be the $R$-module  $$C_n(A,M)\coloneqq M\otimes A^{\otimes n}$$ defined as the tensor product of \(M\) and $n$ copies of $A$ (all the tensor products being  over $R$). The boundary operator, classically denoted by $b$, is the $R$-linear map $b\colon  C_n(A,M)\to C_{n-1}(A,M)$ defined as follows:
\[
b(m,a_1,\dots,a_n)= (ma_1,a_2,\dots,a_n)+\sum_{i=1}^{n-1} (-1)^i(m,a_1,\dots, a_i a_{i+1},\dots,a_n)+ (-1)^n(a_n m,a_1,\dots, a_{n-1})
\]
In the formula, for simplicity of notation, we have dropped the tensor products.
The map $b$ is a boundary operator \cite[Lemma~1.1.2]{loday} and the pair $(C_*(A,M),b)$ is a chain complex, called the \emph{Hochschild complex}.

\begin{definition}\label{defhh}
	The \emph{Hochschild homology} groups $\mathrm{HH}_*(A,M)$ of an associative unital algebra $A$ with coefficients in a bimodule $M$ are the homology groups of the Hochschild complex.
\end{definition}

When $M$ is the bimodule $A$ itself, we use a simpler notation:
\begin{notation}
The {Hochschild homology} groups of an associative unital algebra $A$ with coefficients in $A$ are denoted by $\mathrm{HH}_*(A)$.
\end{notation}

 For illustration, we provide some elementary computations; for further details see \cite{loday}.

\begin{example}
	Let $M=A=R$ be a commutative ring. Then, we have
	\[
\mathrm{HH}_*(R)=
	\begin{cases}
		R & \text{ if } *=0 \\
		0 & \text{otherwise} 
	\end{cases}
	\ .
\]
In fact, as tensor products are computed over $R$ and $R\otimes_R R\cong R$, the chain complex $C_*(R,R)$ is a copy of $R$ in each degree, and the boundary operator $b$ is either the identity or the zero map, depending on the parity of $n$. The computation follows.
\end{example}

\begin{example}
	Let $M=A$ be an associative algebra. Then,
	$$\mathrm{HH}_0(A)=A/[A,A]$$
	where $[A,A]$ denotes the commutator submodule generated by all the $[a,a']=aa'-a'a$. In fact, for $a\otimes a'\in C_1(A,A)$, we have $b(a\otimes a')=aa'-a'a$. In particular, if $A$ is commutative, then we obtain $\mathrm{HH}_0(A)\cong A$, and for $A$ non-commutative  $\mathrm{HH}_0(A)$ coincides with its center.
\end{example}

 We recall that if $A$ is unital and commutative, then  the module of K\"ahler differentials~$\Omega^1(A)$ is the $A$-module generated by all the formal differentials $da$, with $a\in A$, which are $R$-linear, \emph{i.e.}, $d(\lambda a + \mu b)=\lambda da +\mu db$ for all $a,b\in A$ and $\lambda,\mu\in R$, and satisfy the additional product condition $d(ab)=a db+b da$ for all $a,b\in A$.
 Then the first Hochschild homology group of~$A$ is isomorphic to $\Omega^1(A)$ over the ring $R$ (\cite[Definition~1.1.9 \& Proposition~1.1.10]{loday}).
Analogously, in higher degrees, the $n$-th Hochschild homology~$\mathrm{HH}_n(A)$ of $A$ is related to the module of $n$-forms (\cite[Section~1.3]{loday}).

\begin{example}\label{ex:kX}
Let $A=\K[X]$ be the polynomial algebra over a field~$\K$. As $A$ is a commutative algebra, then $\mathrm{HH}_0(A)\cong A$. The first Hochschild homology group is isomorphic to the ideal~$(X)$ of $A$. In fact, by \cite[Proposition~1.1.10]{loday} it is isomorphic to the module of K\"ahler differentials~$\Omega^1(A)$, and this latter is generated by $X$. All its higher Hochschild homology groups are zero. 
\end{example}

\begin{remark}
    Note that the Hochschild homology groups $\mathrm{HH}_*(A,M)$ depend on the choice of the ground ring. Non-isomorphic ground rings can lead to different computations -- see for example \cite[Section~1.1.18]{loday}.
\end{remark}

A dual cohomological theory can be easily derived. In fact, the Hochschild cohomology groups~$\mathrm{HH}^*(A,M)$ of an associative algebra~$A$ with coefficients in $M$ can be defined as the homology groups of the cochain complex $\mathrm{Hom}_R(C_n(A,M))=\mathrm{Hom}_R(A^{\otimes n},M)$ -- \emph{cf.}~\cite{happel,loday}.

\begin{remark}\label{rmk:HH funct}
	The Hochschild homology construction is functorial in both the bimodule~$M$ and the algebra~$A$, and for $M=A$, Hochschild homology is a covariant functor from the category of associative $R$-algebras to the category of $R$-modules \cite[Section~1.1.4]{loday}. More precisely, a bimodule homomorphism $f\colon M\to M'$ induces a map
	\[
	f_*\colon \mathrm{HH}_*(A,M)\to \mathrm{HH}_*(A,M')
	\]
	 in Hochschild homology by sending the element~$(m,a_1,\dots, a_n)\in C_n(A,M)$ to the element $(f(m),a_1,\dots, a_n)\in C_n(A,M')$; the differential $b$ clearly commutes with it and induces the required homomorphism of homology groups. Similarly, if $M=A$ and $\varphi\colon A\to A'$ is an $R$-algebra homomorphism, then $\varphi$ extends to the tensor products giving a map $(a_0,\dots,a_n)\mapsto (g(a_0),\dots, g(a_n))$ and providing a morphism of chain complexes $C_*(A,A)\to C_*(A',A')$; hence, a homomorphism of Hochschild homology groups. 
\end{remark}

We note here that the functoriality with respect to $R$-algebra homomorphisms does not extend to Hochschild cohomology -- see, for instance, \cite[Section~1.5.5]{loday}.
This lack of functoriality  is the reason why we will focus on Hochschild homology, rather than cohomology.
	
\subsubsection{Hochschild homology of path algebras}\label{sec:HHpa}

In this subsection we apply Hochschild homology to certain algebras associated to digraphs. All results and proofs provided here are classical, and we claim no originality. For computational purposes, we also  restrict to the case in which the base ring \(R\) is an (algebraically closed) field~$\K$; we can think of $\K$ as the field of complex numbers.

Let $\GG=(V,E,s,t)$ be a (finite) directed graph. By a \emph{path} in $\GG$ we mean a sequence $\gamma=(e_1,\dots,e_n)$ of composable  
edges $e_i$ in $\GG$ such that $s(e_{i+1})=t(e_i)$. The number $n$ is the \emph{length} of the path. For any vertex $v$ of $\GG$, we also consider the  trivial path~$e_v$ of length $0$ at the vertex $v$.

\begin{definition}\label{defpathalg}
	The \emph{path algebra} $\K\GG$ associated to the digraph $\GG$ is the $\K$-vector space with a basis consisting of all possible paths in $\GG$, and the multiplication being defined 
	on two basis paths \(\gamma = (e_1,\dots,e_n)\), \(\gamma' = (e'_1,\dots,e'_p)\) by the formula
	\[\gamma \gamma' = \begin{cases}
		(e_1,\dots,e_n,e'_1,\dots,e'_p) &\text{ if } s(e'_1)=t(e_n)\\
		0 &\text{ otherwise} 
	\end{cases} \ .\]
\end{definition}

The following lemma is easily derived from the definition:

\begin{lemma}
The path algebra $\K\GG$ associated to a digraph $\GG$ is an associative algebra over $\K$, and has a unit if the digraph is finite.
\end{lemma}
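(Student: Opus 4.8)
The plan is to verify the algebra axioms directly on the distinguished basis of paths and then extend everything by $\K$-bilinearity. Since the multiplication in Definition~\ref{defpathalg} is defined on basis paths and extended bilinearly, the $\K$-vector space $\K\GG$ is automatically closed under a product compatible with scalar multiplication; what remains is to check associativity and to produce a unit. Throughout I would treat the trivial paths $e_v$ as length-$0$ paths with $s(e_v)=t(e_v)=v$, so that they fit the same composability bookkeeping as edges.

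For associativity, I would fix three basis paths $\gamma=(e_1,\dots,e_n)$, $\gamma'=(f_1,\dots,f_p)$, $\gamma''=(g_1,\dots,g_q)$ and compare $(\gamma\gamma')\gamma''$ with $\gamma(\gamma'\gamma'')$; associativity on the basis then propagates to all of $\K\GG$ by bilinearity. The key observation is that any product of two basis paths is either a single concatenated basis path or $0$, and that whether concatenation occurs depends only on the two ``junction'' conditions $t(e_n)=s(f_1)$ and $t(f_p)=s(g_1)$. A short case analysis shows that both triple products are nonzero precisely when both junctions match, in which case each equals the full concatenation $(e_1,\dots,e_n,f_1,\dots,f_p,g_1,\dots,g_q)$; if either junction fails, both sides collapse to $0$. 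Since the two sides agree case by case, associativity holds.

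For the unit, I would exhibit the element $1_{\K\GG}:=\sum_{v\in V} e_v$, the sum of all trivial paths. For any basis path $\gamma$ with source $u$ and target $w$, left-composability of $e_v$ with $\gamma$ forces $v=u$, so $\left(\sum_{v} e_v\right)\gamma = e_u\gamma = \gamma$; symmetrically $\gamma\left(\sum_{v} e_v\right) = \gamma e_w = \gamma$. Hence $1_{\K\GG}$ is a two-sided identity, and by bilinearity it acts as the identity on all of $\K\GG$.

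The only genuinely restrictive point is where finiteness enters, and I expect this to be the main (indeed the only) subtlety worth flagging. The elements of $\K\GG$ are by definition \emph{finite} $\K$-linear combinations of paths, so the candidate unit $\sum_{v\in V} e_v$ belongs to the algebra exactly when $V$ is finite. The associativity verification is pure bookkeeping and requires no finiteness, whereas unitality genuinely fails for infinite vertex sets: there $\K\GG$ is only a ring equipped with a family of orthogonal idempotents $\{e_v\}$ rather than a unital algebra.
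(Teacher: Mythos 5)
Your proof is correct and follows essentially the same route as the paper: associativity is checked on basis paths by observing that both triple products are the full concatenation when all junctions match and zero otherwise, and the unit is exhibited as the sum of all trivial paths. Your explicit remark that finiteness of $V$ is needed precisely so that $\sum_{v\in V} e_v$ is a legitimate (finite) linear combination is a nice clarification of why the hypothesis appears, but the argument itself is the same.
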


\begin{proof}
	Let $\gamma, \gamma', \gamma''$ be paths in $\GG$. Then, by definition of product of paths, both $(\gamma\gamma')\gamma''$ and $\gamma(\gamma'\gamma'')$ are given as concatenation of paths if they are all compatible, and are both $0$ otherwise (by bilinearity). Since the algebra $\K\GG$ is defined as a vector space generated by the possible paths in $\GG$, this is enough to show its associativity. 
	
	If the digraph $\GG$ is finite, then the element $$e\coloneqq \sum_{v\in V(\GG)}e_v$$ is a unit. In fact, if $\gamma= (e_1,\dots,e_n)$ is a path of $\GG$, then $e_{s(e_1)}\gamma=\gamma$ and $\gamma e_{t(e_n)}=\gamma$, where $e_{s(e_1)}$ and $e_{t(e_n)}$ are the constant paths at $s(e_1)$ and $t(e_n)$, respectively. All the other products with paths of type $e_v$, for $v$ a vertex of $\GG$, are zero; hence we have $e\gamma= e_{s(e_1)}\gamma=\gamma$, and $\gamma e=\gamma e_{t{e_n}}=\gamma$.
\end{proof}

 The path algebra~$\K\GG$ is generated (as an algebra over $\K$) by all the paths of length at most~$1$, and the unit is the sum of all trivial paths. More precisely, it has the structure of a graded vector space with grading induced by the length of paths~\cite{Brion}. Observe that all trivial paths $e_v$, with $v\in V(\GG)$, are idempotents of $\K\GG$, as $(e_v)^2=e_v e_v=e_v$. Furthermore, if $v$ and $w$ are distinct vertices of~$\GG$, then $e_v e_w=0$.
 
 We will be interested in digraphs without oriented cycles. In fact, we have the following:
 
 \begin{proposition}\label{prop:HH finite}
 	The path algebra $\K\GG$ is of finite dimension\footnote{here as a $\K$-vector space} if, and only if, $\GG$ is finite and has no oriented cycles. 
 \end{proposition}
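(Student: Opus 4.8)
The plan is to exploit the fact that, by Definition~\ref{defpathalg}, the set of all paths in $\GG$ (including the trivial paths $e_v$) constitutes a $\K$-basis of $\K\GG$. Consequently $\dim_\K \K\GG$ equals the cardinality of the set of paths in $\GG$, and the proposition reduces to the purely combinatorial assertion that this set is finite precisely when $\GG$ is finite and has no oriented cycle. All the work is in translating the two hypotheses into statements about lengths and multiplicities of paths.

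For the implication from acyclicity (and finiteness) to finite-dimensionality, I would first record the key observation that, when $\GG$ has no oriented cycle, every path visits pairwise distinct vertices. Indeed, a path $\gamma=(e_1,\dots,e_n)$ determines the vertex sequence $s(e_1),t(e_1),\dots,t(e_n)$ of length $n+1$; if two entries coincided I would pass to the \emph{first} repetition, so that the intermediate vertices are distinct, and observe that the edges joining them form an embedding of some $C_m$, i.e.\ an oriented cycle, contradicting the hypothesis. Hence, writing $N=|V|$, every path has length at most $N-1$, and since there are only finitely many edges, there are only finitely many paths of bounded length. Therefore $\K\GG$ is finite-dimensional.

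For the converse I would argue by contraposition, showing that if $\GG$ is infinite or contains an oriented cycle then the basis of paths is infinite. If $\GG$ has infinitely many vertices (respectively edges), the trivial paths $e_v$ (respectively the length-one paths) already supply infinitely many distinct basis vectors. If instead $\GG$ contains an oriented cycle, i.e.\ an embedded $C_m$, then for each $k\in\N$ traversing this cycle $k$ times yields a path of length $km$; these are pairwise distinct basis elements, so $\K\GG$ is infinite-dimensional.

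The hard part will not be any genuine analytic difficulty but rather the bookkeeping in the acyclicity step: one must argue carefully that a repeated vertex along a path gives an oriented cycle \emph{in the paper's sense} of an embedded $C_m$. This is precisely why I would reduce to the first repetition, so that the intermediate vertices are distinct and the resulting sub-loop is a genuine embedding of $C_m$ rather than merely a closed walk. Once this point is settled, the length bound $n\le N-1$ and the ensuing finiteness count are immediate.
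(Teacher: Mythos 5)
Your proof is correct and follows essentially the same route as the paper's: reduce to counting the basis of paths, bound path lengths in the finite acyclic case via the observation that a repeated vertex along a path yields an oriented cycle, and for the converse produce infinitely many paths from infiniteness of $\GG$ or from repeated traversal of a cycle. If anything, your reduction to the \emph{first} repeated vertex makes explicit a step the paper leaves implicit, namely that a closed walk contains a genuinely embedded $C_m$.
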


\begin{proof}
	The path algebra $\K\GG$ is generated by the paths of length at most one, this number being bounded by the number of all vertices and edges of $\GG$. 
	
	To prove the statement, it is enough to show that the number of paths of $\GG$ is finite if, and only if, $\GG$ is finite and has no oriented cycles. Assume first that $\GG$ is finite without oriented cycles. Then the number of paths of length $l$ in $\GG$ is bounded by $|E(\GG)|^l$. If the number of paths of $\GG$ is infinite, then  there is a path of arbitrary large length. In particular, there exists a path~$\gamma$ with length greater than the number of vertices of $\GG$. This leads to a contradiction, because there exists a vertex $v$ of $\GG$ encountered twice by $\gamma$, hence an oriented cycle. 
	Conversely, any infinite graph, or any oriented cycle of $\GG$, gives infinitely many paths.
\end{proof}

	\begin{figure}[h]
		\centering
		\begin{tikzpicture}[baseline=(current bounding box.center)]
			\tikzstyle{point}=[circle,thick,draw=black,fill=black,inner sep=0pt,minimum width=2pt,minimum height=2pt]
			\tikzstyle{arc}=[shorten >= 8pt,shorten <= 8pt,->, thick]
			
			\node[above] (v0) at (0,0) {$v_1$};
			\draw[fill] (0,0)  circle (.05);
			\node[above] (v1) at (1.5,0) {$v_2$};
			\draw[fill] (1.5,0)  circle (.05);
			\node[] at (3,0) {\dots};
			\node[above] (v4) at (4.5,0) {$v_{n-1}$};
			\draw[fill] (4.5,0)  circle (.05);
			\node[above] (v5) at (6,0) {$v_{n}$};
			\draw[fill] (6,0)  circle (.05);
			
			\draw[thick, red, -latex] (0.15,0) -- (1.35,0);
			\draw[thick, red, -latex] (1.65,0) -- (2.5,0);
			\draw[thick, red, -latex] (3.4,0) -- (4.35,0);
			\draw[thick, red, -latex] (4.65,0) -- (5.85,0);
		\end{tikzpicture}
		\caption{The linear $n$-graph $I_n$.}
		\label{fig:nstep}
	\end{figure}
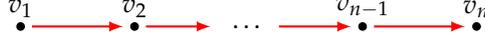
	
	We proceed with some elementary examples of path algebras:

\begin{example}
	The simplest graphs to consider are given by the graph $\GG_0$ with a single vertex~$v$, the directed graph~$\GG_1$ with a vertex $v$ and a loop $e$ at $v$, and the digraph $\GG_2$ with two vertices~$v_0,v_1$ and a directed edge $e_1=(v_0,v_1)$ between them. The associated path algebras are given by the base field $\K$, the polynomial ring $\K[X]$, and by the ring of upper triangular $(2\times 2)$-matrices. More generally, if $I_n$ is the graph illustrated in Figure~\ref{fig:nstep}, then the path algebra~$\K I_n$ is isomorphic to the 
	ring of upper triangular $(n\times n)$-matrices.
	An isomorphism can be described by sending the trivial path $e_{v_i}$  to the entry $(i,i)$, and edge $(v_i,v_{i+1})$ to the entry~$(i,i+1)$.
\end{example}

Observe that the map assigning to a digraph  its path algebra is functorial:

\begin{observation}\label{obsHHfunctor}
The assignment $\GG\mapsto \K\GG$ associating  the path algebra $\K\GG$ to a digraph~$\GG$ extends to a functor from the category of directed graphs to the category \(\K\text{-}\mathbf{Alg}\) of associative $\K$-algebras. To see this, we first observe that a morphism of digraphs sends distinct vertices to distinct vertices and, for  consecutive edges, it preserves sources and targets. Hence, a morphism of digraphs sends paths to paths, induces a $\K$-linear map between the vector spaces, and preserves the composition of paths. As a consequence, the induced map between the associated path algebras uniquely extends by linearity to a $\K$-algebra homomorphism.
Hochschild homology of associative path algebras is functorial -- see Remark~\ref{rmk:HH funct}. Then, the composition
\begin{equation}\label{eq:HH}
\mathbf{Digraph}\xrightarrow{\K-} \K\text{-}\mathbf{Alg}\xrightarrow{\mathrm{HH}_n}\mathbf{Vect}
\end{equation}
describes the Hochschild homology of the path algebra of a (finite) digraph as a functor on the category $\mathbf{Digraph}$ with values in the category~$\mathbf{Vect}$ of vector spaces over $\K$. 
\end{observation}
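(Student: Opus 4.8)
The plan is to produce the functor $\K-$ by prescribing its values on objects and morphisms, verifying that morphisms are sent to algebra homomorphisms and that the functor axioms hold; the statement about $\mathrm{HH}_n$ will then be purely formal, following from the functoriality of Hochschild homology recorded in Remark~\ref{rmk:HH funct}. On objects I set $\GG\mapsto\K\GG$, the path algebra of Definition~\ref{defpathalg}. On a morphism $\phi\colon\GG_1\to\GG_2$ I define $\K\phi\colon\K\GG_1\to\K\GG_2$ on basis paths by applying $\phi$ vertexwise: a trivial path $e_v$ goes to $e_{\phi(v)}$, and a nontrivial path $\gamma=(e_1,\dots,e_n)$ goes to the sequence $(\phi(e_1),\dots,\phi(e_n))$, where $\phi$ carries an edge $(v,w)$ to $(\phi(v),\phi(w))$; I then extend $\K$-linearly. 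The first (routine) point is that this is a well-defined linear map: since $\phi$ preserves sources and targets, $t(\phi(e_i))=\phi(t(e_i))=\phi(s(e_{i+1}))=s(\phi(e_{i+1}))$, so the image sequence is again a composable path of $\GG_2$ and hence a genuine basis element.

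The substance is to check that $\K\phi$ is multiplicative. This splits into two cases. If $\gamma,\gamma'$ are composable in $\GG_1$, then $\K\phi(\gamma\gamma')$ is the image of their concatenation, which by the source/target compatibility above is exactly the concatenation $\K\phi(\gamma)\,\K\phi(\gamma')$. The delicate case is $\gamma\gamma'=0$, i.e. $t(\gamma)\neq s(\gamma')$: multiplicativity then forces $\K\phi(\gamma)\,\K\phi(\gamma')=0$, which holds precisely when $\phi(t(\gamma))\neq\phi(s(\gamma'))$. This is the assertion that $\phi$ does not identify distinct vertices, and the same input guarantees the orthogonality $e_{\phi(v)}e_{\phi(w)}=0$ of the idempotents for $v\neq w$. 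Granting multiplicativity, the functor axioms $\K(\id)=\id$ and $\K(\psi\circ\phi)=\K\psi\circ\K\phi$ are immediate, as both sides agree on each basis path.

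The main obstacle is exactly this no-collapse requirement, so I would isolate it at the outset. A general edge-preserving morphism in the sense of Definition~\ref{defmorgr} need not be injective on vertices: it may send two nonadjacent vertices to the same target, and such a collapse can turn a noncomposable pair into a composable one, breaking multiplicativity (collapsing $t(\gamma)$ onto $s(\gamma')$ makes $\K\phi(\gamma)\,\K\phi(\gamma')$ a nonzero concatenation while $\gamma\gamma'=0$). I would therefore either declare the functor on the subcategory $\mathbf{RegDigraph}$ of vertex-injective (regular) morphisms, or record that the morphisms relevant to the persistence pipeline are inclusions and so are injective. On either restricted domain, $\K-$ is a genuine functor into $\K\text{-}\mathbf{Alg}$.

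For the final assertion I would invoke Remark~\ref{rmk:HH funct}: since $\mathrm{HH}_n$ is covariant on algebra homomorphisms, the composite $\mathrm{HH}_n\circ(\K-)$ is again a functor into $\mathbf{Vect}$, which is the displayed pipeline. It is worth flagging that $\K\phi$ need not be unit preserving—an inclusion sends the unit $\sum_{v}e_v$ to the idempotent $\sum_{v}e_{\phi(v)}$—but this is harmless, because the Hochschild differential $b$ is built only from the multiplication of $A$ and makes no use of the unit, so $(\K\phi)^{\otimes(\bullet+1)}$ is a chain map regardless.
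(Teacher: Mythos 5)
Your construction is the same as the paper's: define $\K\phi$ on basis paths by applying $\phi$ vertexwise, check multiplicativity, and then compose with the functoriality of $\mathrm{HH}_n$ from Remark~\ref{rmk:HH funct}. The interesting divergence is at the multiplicativity step. The paper's justification rests on the assertion that ``a morphism of digraphs sends distinct vertices to distinct vertices,'' but under Definition~\ref{defmorgr} this is only guaranteed for \emph{adjacent} vertices (since edges cannot be collapsed onto the excluded diagonal); two non-adjacent vertices may well be identified. You correctly isolate this as the crux: if $\phi$ identifies $t(\gamma)$ with $s(\gamma')$ for a non-composable pair, then $\gamma\gamma'=0$ while $\K\phi(\gamma)\,\K\phi(\gamma')$ is a nonzero concatenation, so $\K\phi$ fails to be an algebra homomorphism. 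A concrete instance: two disjoint edges $(a,b)$ and $(c,d)$ mapping onto consecutive edges $(x,y)$ and $(y,z)$ via $b,c\mapsto y$ is a legitimate morphism in $\mathbf{Digraph}$ that breaks multiplicativity. Your proposed repair --- restricting the functor to $\mathbf{RegDigraph}$, or to the inclusions actually used in the persistence pipeline --- is the honest fix, and on that restricted domain your verification is complete. Your secondary remark about $\K\phi$ not preserving units (it sends $\sum_v e_v$ to the idempotent $\sum_v e_{\phi(v)}$) is also a real point the paper passes over silently; your observation that the Hochschild differential $b$ uses only the multiplication, so the induced map on $C_*(A,A)$ is still a chain map, disposes of it. In short: same route as the paper, but you found and patched a genuine gap in the paper's own argument rather than reproducing it.
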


Computations of Hochschild (co-)homology groups may be difficult for arbitrary associative algebras, but when $A$ is the path algebra~$\K\GG$ of a directed graph, computations are easier and reflect the combinatorial properties of the digraph~$\GG$. First, it is a standard fact that the path algebra associated to a digraph is a hereditary algebra, \emph{i.e.}, all submodules of its projective modules are projective. In fact, modules over a path algebra have a standard resolution of length~$1$ \cite[Proposition~1.4.1]{Brion}, and as a consequence the Hochschild homology groups  $\mathrm{HH}_*(A)$ of the path algebra $A$ vanish in degree $\geq 2$.  
In degrees $0$ and $1$, the computation is due to Happel~\cite{happel} 
(see also \cite[Proposition~4.4]{redondo}):

\begin{theorem}\label{thmhapp}
	If $\GG=(V(\GG),E(\GG),s,t)$ is a connected  directed graph without oriented cycles and $\K$ is an algebraically closed field, then 
	\[
	\dim_{\K} \mathrm{HH}^i(A)=\dim_{\K} \mathrm{HH}_i(A)=
	\begin{cases}
		1 \quad &\text{ if } i=0 \\
		0 \quad &\text{ if } i>1 \\
		1-n +\sum_{e\in E(\GG)}\dim_{\K} e_{t(e)}A e_{s(e)}  \quad &\text{ if } i=1
	\end{cases}
	\] 
	where $A=\K\GG$ is the path algebra of $\GG$,  $n=|V(\GG)|$ is the number of vertices of $\GG$ and $e_{t(e)}A e_{s(e)}$ is the subspace of $A$ generated by all the possible paths from $s(e)$ to $t(e)$ in $\GG$.
\end{theorem}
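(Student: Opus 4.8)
The plan is to compute everything from a single length-one projective resolution of $A=\K\GG$ over its enveloping algebra $A^e=A\otimes_\K A^{\mathrm{op}}$, and then to extract the three cases by bookkeeping of dimensions. The vanishing in degrees $i>1$ is immediate: as recalled before the statement, $\K\GG$ is hereditary (its modules admit projective resolutions of length one, \cite{Brion}), so $A$ has global dimension at most one, whence $\mathrm{HH}_i(A)=\mathrm{HH}^i(A)=0$ for $i\geq 2$. Moreover $A$ is finite dimensional by Proposition~\ref{prop:HH finite}, so all groups involved are finite dimensional over $\K$.

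For the degrees $0$ and $1$ I would use the standard bimodule resolution attached to a quiver. Writing $S\coloneqq\bigoplus_{v\in V}\K e_v\subseteq A$ for the (separable, commutative) subalgebra of trivial paths — here algebraic closedness of $\K$ guarantees the splitting $A/\mathrm{rad}(A)\cong S$, placing us in the setting of \cite{happel} — there is a short exact sequence of $A$-bimodules
\[
0\longrightarrow A\otimes_S\langle E\rangle_\K\otimes_S A\xrightarrow{\ \mu_1\ }A\otimes_S A\xrightarrow{\ \mu_0\ }A\longrightarrow 0,
\]
where $\langle E\rangle_\K$ is the $S$-bimodule spanned by the edges, $\mu_0$ is the multiplication, and $\mu_1(x\otimes e\otimes y)=xe\otimes y-x\otimes ey$. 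Establishing exactness — equivalently, that the kernel of the multiplication map is generated as a bimodule exactly by the arrows — is the technical heart of the argument; it rests on the length grading of $\K\GG$ and on $A$ being generated by paths of length $0$ and $1$.

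Applying $\mathrm{Hom}_{A^e}(-,A)$ and using the adjunction $\mathrm{Hom}_{A^e}(A\otimes_S M\otimes_S A,\,A)\cong\mathrm{Hom}_{S^e}(M,A)$ collapses the resolution to the two-term complex
\[
0\longrightarrow\bigoplus_{v\in V}e_vAe_v\xrightarrow{\ \phi\ }\bigoplus_{e\in E}e_{t(e)}Ae_{s(e)}\longrightarrow 0,
\]
so that $\mathrm{HH}^0(A)=\ker\phi$ and $\mathrm{HH}^1(A)=\mathrm{coker}\,\phi$ (the homological groups arise from the dual procedure $A\otimes_{A^e}(-)$ applied to the same resolution). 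Since $\GG$ has no oriented cycles, $e_vAe_v=\K e_v$, so the left-hand term has dimension $n=|V|$, while the right-hand term has dimension $\sum_{e\in E}\dim_\K e_{t(e)}Ae_{s(e)}$. I would then conclude in two steps. First, in degree $0$ one identifies $\mathrm{HH}^0(A)=\ker\phi$ with the centre $Z(A)$: a central element is block-diagonal, hence lies in $\bigoplus_v\K e_v$, and commuting with every arrow forces its coefficients to agree along edges, so connectedness of $\GG$ forces $Z(A)$ to be spanned by the unit $\sum_v e_v$, giving $\dim_\K\mathrm{HH}^0(A)=1$. Second, rather than computing $\phi$ explicitly, I would read off $\mathrm{HH}^1$ from the Euler characteristic of the two-term complex,
\[
\dim_\K\mathrm{HH}^0(A)-\dim_\K\mathrm{HH}^1(A)=n-\sum_{e\in E}\dim_\K e_{t(e)}Ae_{s(e)},
\]
and solving with $\dim_\K\mathrm{HH}^0(A)=1$ gives exactly $\dim_\K\mathrm{HH}^1(A)=1-n+\sum_{e\in E}\dim_\K e_{t(e)}Ae_{s(e)}$.

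The main obstacle is the exactness of the bimodule resolution together with the clean adjunction reduction to the diagonal blocks $e_vAe_v$ and the edge-blocks $e_{t(e)}Ae_{s(e)}$; once this reduction is in place, the degree-$0$ centre computation (where connectedness enters) and the degree-$1$ Euler-characteristic count are routine. Since all of this is classical, an acceptable alternative is to cite Happel's computation \cite{happel} and its formulation in \cite[Proposition~4.4]{redondo}, and to limit the proof to verifying the two dimension identities above.
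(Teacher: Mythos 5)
The paper itself contains no proof of this statement: the vanishing in degrees $\geq 2$ is attributed to hereditarity of path algebras via \cite{Brion}, and the degree $0$ and $1$ formulas are quoted from Happel \cite{happel} and \cite[Proposition~4.4]{redondo}. Your reconstruction of the \emph{cohomological} half is correct and is exactly the standard (Happel) argument: the length-one bimodule resolution, the adjunction collapsing $\mathrm{Hom}_{A^e}(-,A)$ to the two-term complex $\bigoplus_{v}e_vAe_v\to\bigoplus_{e}e_{t(e)}Ae_{s(e)}$, the identification of $\mathrm{HH}^0(A)$ with the centre (one-dimensional by connectedness and acyclicity), and the Euler-characteristic count giving $\dim_\K\mathrm{HH}^1(A)=1-n+\sum_{e}\dim_\K e_{t(e)}Ae_{s(e)}$.

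The gap is the parenthetical claim that the homological groups ``arise from the dual procedure $A\otimes_{A^e}(-)$ applied to the same resolution'', which you treat as routine. It is not: applying $A\otimes_{A^e}(-)$ to the same resolution, the degree-one term is the cyclic tensor product $\K E\otimes_{S^e}A$, whose summand attached to an edge $e$ is spanned by the paths from $t(e)$ back to $s(e)$; since $\GG$ has no oriented cycles this term vanishes, while the degree-zero term $\bigoplus_v e_vAe_v$ has dimension $n$. The dual procedure therefore yields $\mathrm{HH}_0(A)\cong\K^{n}$ and $\mathrm{HH}_1(A)=0$, not the values asserted in the statement. This is consistent with the paper's own observation that $\mathrm{HH}_0(A)=A/[A,A]$: in an acyclic digraph every nontrivial path $p$ equals the commutator $[e_{s(p)},p]$, while no commutator of basis paths has a nonzero component on the trivial paths, so $A/[A,A]\cong\K^{n}$; it also agrees with the classical splitting of $\mathrm{HH}_*$ of a triangular matrix algebra into the $\mathrm{HH}_*$ of its diagonal blocks. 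So your own method, carried out honestly, proves the cohomological formula but contradicts the asserted equality $\dim_\K\mathrm{HH}_i(A)=\dim_\K\mathrm{HH}^i(A)$ whenever $n>1$: Happel's formula is a cohomology computation only, and the homology of an acyclic path algebra is concentrated in degree $0$ with $\dim_\K\mathrm{HH}_0(A)=n$. The homological half of the theorem cannot be obtained by ``dualising'' and would need to be either corrected or given a separate, genuinely different justification.
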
  

In general, for infinite digraphs, or digraphs admitting cycles, this computation can not be used and the first Hochschild homology group is of infinite rank.

\begin{example}
Let $\GG$ be the digraph with a vertex $v$ and the single directed edge (a loop) $(v,v)$. The path algebra $\K\GG$ is isomorphic to the polynomial algebra $\K[X]$. This is a commutative algebra over $\K$, hence by  Example~\ref{ex:kX}, we get
	\[
	\mathrm{HH}_i(\K\GG)\cong
	\begin{cases}
		\K[X] \quad &\text{ if } i=0 \\
		(X) \quad &\text{ if } i=1 \\
		0 \quad &\text{ if } i>1,
	\end{cases}
	\] 
which is not finite over $\K$.
\end{example}

In concrete applications, one avoids the case of digraphs with paths  of infinite length by restricting to some special classes of acyclic directed subgraphs. 
In such restricted context, we observe that Hochschild homology can be seen as a functor with values in the category of  (graded)  vector spaces of {finite} dimension: 

\begin{observation}\label{obs:HHfunctorfin}
	Theorem~\ref{thmhapp} implies that, when restricting to digraphs without oriented cycles, the associated Hochschild (co-)homology groups are vector spaces of finite dimension. Let $\mathbf{Digraph}_0$ be the subcategory of $\mathbf{Digraph}$ consisting of finite digraphs without oriented cycles and induced morphisms of digraphs. Then, the composition in Eq.~\eqref{eq:HH} induces the composition of functors
		\[
	\mathbf{Digraph}_0\xrightarrow{\K-} \K\text{-}\mathbf{Alg}\xrightarrow{\mathrm{HH}_n}\mathbf{FinVect}\]
	where now the target category $\mathbf{FinVect}$ is the category of \emph{finite-dimensional} vector spaces over~$\K$.
\end{observation}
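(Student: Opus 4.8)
The plan is to observe that almost nothing new needs to be proved at the level of functoriality: the composite in Eq.~\eqref{eq:HH} is already established in Observation~\ref{obsHHfunctor}, and since $\mathbf{Digraph}_0$ is a full subcategory of $\mathbf{Digraph}$, restriction immediately yields a functor $\mathbf{Digraph}_0 \to \mathbf{Vect}$ with no further checks on compositions or identities. The only genuine content is therefore that this restricted functor takes values in the full subcategory $\mathbf{FinVect} \hookrightarrow \mathbf{Vect}$ of finite-dimensional vector spaces; because $\mathbf{FinVect}$ is full, it suffices to verify the claim on objects, i.e.~that $\mathrm{HH}_n(\K\GG)$ is finite-dimensional for every $\GG \in \mathbf{Digraph}_0$.

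First I would settle this object-level claim by the quickest route, via Proposition~\ref{prop:HH finite}: since $\GG$ is finite and has no oriented cycles, the path algebra $A = \K\GG$ is a finite-dimensional $\K$-vector space. Consequently each Hochschild chain group $C_n(A,A) = A \otimes A^{\otimes n} = A^{\otimes(n+1)}$ is finite-dimensional, and $\mathrm{HH}_n(A) = \ker(b_n)/\im(b_{n+1})$ is a subquotient of a finite-dimensional space, hence itself finite-dimensional. This already proves the statement, and in fact delivers finiteness in all degrees $n$ simultaneously.

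Alternatively --- and this is the route suggested by the phrasing of the statement --- one can read off finiteness directly from Theorem~\ref{thmhapp}. That theorem is stated for \emph{connected} acyclic digraphs, so to cover the possibly disconnected objects of $\mathbf{Digraph}_0$ I would first decompose $\GG$ into its finitely many (weakly) connected components $\GG_1, \dots, \GG_k$. Since no path of $\GG$ crosses between components, the basis of paths splits and one obtains a $\K$-algebra isomorphism $\K\GG \cong \K\GG_1 \times \cdots \times \K\GG_k$. Hochschild homology is additive over such finite products, $\mathrm{HH}_n(B \times C) \cong \mathrm{HH}_n(B) \oplus \mathrm{HH}_n(C)$, the mixed bimodule contributions vanishing because the units of the factors are orthogonal idempotents in $\K\GG$; hence $\mathrm{HH}_n(\K\GG) \cong \bigoplus_{i=1}^{k} \mathrm{HH}_n(\K\GG_i)$. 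Each summand is finite-dimensional by Theorem~\ref{thmhapp}, and a finite direct sum of finite-dimensional spaces is finite-dimensional.

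There is essentially no hard step here; the only point requiring a moment's care is the disconnected case in the second approach, namely recording the product decomposition of the path algebra together with the additivity of Hochschild homology over finite products. If one prefers to avoid even that, the first approach via Proposition~\ref{prop:HH finite} bypasses it entirely, so I would lead with that argument and relegate the Happel computation to a remark that it additionally supplies the explicit dimensions.
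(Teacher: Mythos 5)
Your proposal is correct, and in fact more careful than the paper, which offers no argument beyond the single sentence ``Theorem~\ref{thmhapp} implies that \dots the associated Hochschild (co-)homology groups are vector spaces of finite dimension.'' Your primary route is genuinely different from the paper's: instead of invoking Happel's dimension formula, you derive finiteness directly from Proposition~\ref{prop:HH finite} --- the path algebra $A=\K\GG$ of a finite acyclic digraph is finite-dimensional, so every Hochschild chain group $A^{\otimes(n+1)}$ is finite-dimensional and $\mathrm{HH}_n(A)$ is a subquotient of a finite-dimensional space. This is more elementary, works uniformly in all degrees, and sidesteps any connectedness hypothesis; what it does not give is the explicit dimension count that the paper later relies on (e.g.\ in Definition~\ref{HHX} and the persistence pipeline), which is what the appeal to Theorem~\ref{thmhapp} buys. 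Your secondary route is the paper's implicit argument made rigorous: you correctly observe that Theorem~\ref{thmhapp} is stated only for \emph{connected} acyclic digraphs, whereas objects of $\mathbf{Digraph}_0$ need not be connected, and you repair this by decomposing $\K\GG$ as a finite product over weakly connected components and using additivity of Hochschild homology over products of algebras (the orthogonal-idempotent argument is the standard one, cf.\ Loday). That is a genuine, if small, gap in the paper's phrasing that your write-up closes. Your observation that only the object-level claim needs checking, because $\mathbf{FinVect}$ is a full subcategory of $\mathbf{Vect}$, is also correct and is the right way to dispose of the functoriality part.
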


We give a concrete example:

\begin{example}
	Let $\phi\colon \GG_1\to \GG_2$ be the regular morphism of digraphs illustrated in Figure~\ref{fig:diag square morph}, and defined by sending $v_i$ to the vertex $w_i$, for $i=0,1,2$. 
	The morphism~$\phi$ sends paths of $\GG_1$ to paths of $\GG_2$ of the same length, thus inducing by  $\K$-linearity an homomorphism of vector spaces $\phi_*\colon \K\GG_1\to \K\GG_2$. In terms of basis elements, the trivial path~$e_{v_i}$ is sent to the trivial path~$e_{w_i}$, whereas the basis elements corresponding to the $1$-paths $(v_0,v_1)$, $(v_0,v_2)$, and $(v_1,v_2)$ are sent to those of~$\K\GG_2$ corresponding to $(w_0,w_1)$, $(w_0,w_2)$, and $(w_1,w_2)$. The tensor product operations are clearly compatible. As in Remark~\ref{obs:HHfunctorfin}, we get morphisms 
	\[
	\phi_n\colon C_n(\K\GG_1,\K\GG_1)= \K\GG_1\otimes\dots\otimes \K\GG_1\longrightarrow \K\GG_2\otimes\dots\otimes \K\GG_2 =C_n(\K\GG_2,\K\GG_2),
	\]
	hence induced maps between the Hochschild (co-)chain complexes. By applying the functors $\mathrm{Hom}(-,\K\GG_{-})$ to the minimal projective resolutions of $\K\GG_1$ and $\K\GG_2$, we get a diagram of short exact sequences (for such a computation, see \cite{happel})
	\[
	\begin{tikzcd} 
	0	\arrow[r] & \K \arrow[d,"\mathrm{Id}_{\K}"']\arrow[r]& \K^3\cong \K^{|V(\GG_1)|}\arrow[r]\arrow[d,"\phi_*"'] & \K^4 \arrow[d,"\phi_*"'] \arrow[r] &0 \\ 
		0	\arrow[r] & \K \arrow[r]& \K^4\cong \K^{|V(\GG_2)|}\arrow[r] & \K^8  \arrow[r] &0 \\ 
	\end{tikzcd}\]
	where the maps are induced by identification of paths of length $0$ (central map) and of the paths $(v_0,v_1)$, $(v_0,v_2)$, $(v_1,v_2)$, and $(v_0,v_1,v_2)$ with the respective ones in $\GG_2$ (rightmost map).
	\begin{figure}
		\centering
	\begin{tikzpicture}[scale=0.8][baseline=(current bounding box.center)]
		\tikzstyle{point}=[circle,thick,draw=black,fill=black,inner sep=0pt,minimum width=2pt,minimum height=2pt]
		\tikzstyle{arc}=[shorten >= 8pt,shorten <= 8pt,->, thick]		
		
		\node  (v0) at (0,0) {};
		\node[below] at (0,0) {$v_0$};
		\draw[fill] (0,0)  circle (.05);

		\node  (v2) at (1.5,2) { };
		\node[above] at (1.5,2) {$v_{1}$};
		\draw[fill] (1.5,2)  circle (.05);
		\node  (v3) at (3,0) { };
		\node[below]  at (3,0) {$v_{2}$};
		\draw[fill] (3,0)  circle (.05);
		
		\draw[thick, red, -latex] (v0) -- (v2);
		\draw[thick, red, -latex] (v2) -- (v3);
		\draw[thick, red, -latex] (v0) -- (v3);
		
		\draw[thick,  -latex] (4,1) -- (7,1);
	\end{tikzpicture}
	\begin{tikzpicture}[scale=0.7][baseline=(current bounding box.center)]
		\tikzstyle{point}=[circle,thick,draw=black,fill=black,inner sep=0pt,minimum width=2pt,minimum height=2pt]
		\tikzstyle{arc}=[shorten >= 8pt,shorten <= 8pt,->, thick]		
		
		\node (o) at (-1.5,0) {};
		
		\node  (v0) at (0,0) {};
		\node[below] at (0,0) {$w_0$};
		\draw[fill] (0,0)  circle (.05);
		\node  (v1) at (0,3) { };
		\node[above]  at (0,3) {$w_3$};
		\draw[fill] (0,3)  circle (.05);
		\node  (v2) at (3,3) { };
		\node[above] at (3,3) {$w_{1}$};
		\draw[fill] (3,3)  circle (.05);
		\node  (v3) at (3,0) { };
		\node[below]  at (3,0) {$w_{2}$};
		\draw[fill] (3,0)  circle (.05);
		
		\draw[thick, red, -latex] (v0) -- (v1);
		\draw[thick, red, -latex] (v0) -- (v2);
		\draw[thick, red, -latex] (v2) -- (v3);
		\draw[thick, red, -latex] (v0) -- (v3);
		\draw[thick, red, -latex] (v1) -- (v2);
	\end{tikzpicture}
	\caption{Regular morphism of digraphs $\phi(v_i)=w_i$.}
	\label{fig:diag square morph}
\end{figure}
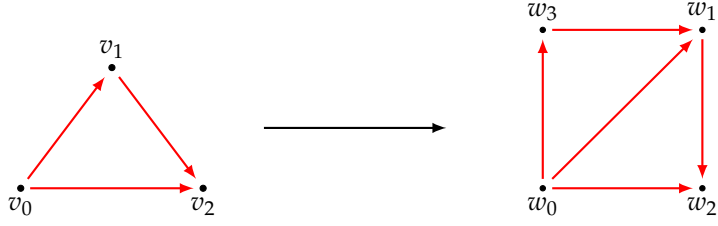
Focusing on the first Hochschild homology groups $\mathrm{HH}_1(\K\GG_1)\cong \K^2$ and $ \mathrm{HH}_1(\K\GG_2)\cong \K^5$,  this roughly describes the functoriality of Hochschild homology by means of the paths in the digraphs; the first Hochschild homology groups being obtained as alternating sums of the vector spaces appearing in the horizontal diagrams of the short exact sequences.
\end{example}

In general, descriptions of the Hochschild homology groups are not easy. In this work, we use the computation of Theorem~\ref{thmhapp} as handleable. In order not to loose the information captured by the number of cycles, we will also consider the following characteristic measure:

\begin{definition}\label{HHX}
	Let $\GG=(V(\GG),E(\GG),s,t)$ be a digraph,  and $A=\K\GG$ the path algebra of $\GG$ over a field~$k$. The \emph{Hochschild characteristic} of $\GG$ is defined as
	\[
	\mathcal{X}_{\mathrm{HH}}(\GG)\coloneqq \dim_k \mathrm{HH}_0(A)-\left(1-n +\sum_{e\in E(\GG)}\dim_k e_{t(e)}A e_{s(e)}\right)+C(\GG)
	\]
	where $n=|V(\GG)|$ and $C(\GG)$ is the number of simple oriented cycles in $\GG$, i.e.\ cycles with only the first and last vertices being equal.
\end{definition}

Note that the Hochschild characteristic agrees with the Euler characteristic of the Hochschild chain complex associated to the path algebra $k\GG$, if $\GG$ has no oriented cycles and $k$ is algebraically closed. The definition of {Hochschild characteristic} extends then to any field.
We conclude with an illustrative  example.
\begin{example}
	We apply the Hochschild homology computation of Theorem~\ref{thmhapp} to the square digraph~$\GG$ of Example~\ref{ex:square} and Example~\ref{ex:sqfrp}. The Hochschild homology group in degree~$0$ is isomorphic to $\K$, whereas the dimension of $ \mathrm{HH}_1(\K\GG)$ is $1$. The computation in degree $1$ changes if we add to $\GG$ the edge $(0,3)$, giving $\dim_{\K} \mathrm{HH}_1(\K\GG)=4$. 
\end{example}

\subsection{A view towards other approaches}\label{sechomothers}

We conclude this survey section by reviewing some generalizations and other approaches to (co-)homology theories of digraphs; the literature on the subject is very rich, and this survey is far from being exhaustive.
\begin{itemize}
    \item A generalization of the homology of  directed graphs as the homology of the associated flag complex (see Section~\ref{secflcompl}) is given by the homology of the so-called \emph{flag tournaplex}~\cite{govc2021complexes}. 
In fact, when a directed graph has no reciprocal edges the associated flag tournaplex
is isomorphic to the flag complex of the underlying undirected graph. The flag tournaplex of a digraph has been employed  as a classifier in \cite{govc2021complexes}, combined with  directionality invariants and persistent homology methods.
\item The theory of path homology  for digraphs, as developed in \cite{Grigoryan_first}, has been further extended to ring coefficients \cite{li2020geometric} and to multigraphs and quivers \cite{Grigoryan} or to more general  path algebras, \emph{e.g.}, to the realm of differential algebras \cite{ren2021differential}. It has also a cohomological counterpart \cite{Grigoriancoho}. This has also been extended to a persistent path homology approach -- see for instance \cite{dey2020efficient}.
\item Hochschild homology can be endowed with an additional differential of degree $1$, usually denoted by $B$, turning it into a \emph{mixed complex} \cite{KASSEL1987195}. The additional differential $B$ leads to the construction of the so-called \emph{cyclic homology}, and to its variations \emph{negative cyclic homology} and \emph{periodic cyclic homology} of algebras -- see also \cite{loday}. Therefore, application of such homology theories to the path algebra of a digraph may lead to other invariants, extending the approach surveyed in Section~\ref{sec:HHpa}.
\item Ordinary homology groups of digraphs, and the Hochschild homology groups of the associated path algebras, have  natural generalizations to the categorical framework.
 One way to do so is by replacing the path algebra of Definition~\ref{defpathalg} with a suitable (freely generated) category~$\mathbf{Path}(\GG)$, called the \emph{path category} -- see also the discussion below. 
 In a similar fashion, instead of constructing the path algebra or the path category, one can associate to a directed graph other mathematical objects, for example the so-called   path posets~$P(\GG)$. First introduced by Turner and Wagner~\cite{turner}, the path poset has been recently used to define new combinatorial cohomologies of digraphs \cite{secondo, caputi2021multipath}, and that can be generalized to arbitrary \emph{monotone} properties of graphs~\cite{monotone}. This approach seems to be related to other topological/combinatorial invariants of simplicial complexes~\cite{caputi2022categorifying}. 
 \item Among other approaches,  sheaf homology
  has been used with some applications to the  Max-Flow Min-Cut  theorem \cite{krishnan2014flowcut}, as well as a directed approach to algebraic topology built upon  cohomology of small categories with coefficients in natural systems as in \cite{BAUES1985187} and \cite{10.1007/978-3-662-47666-6_14} and in the references therein.
\end{itemize}

We can illustrate these various approaches in the following (non-commutative) diagram
\[
\begin{tikzcd}
 & &\mathbf{Cat} \arrow[dddrr,bend left,  "H"]& &  \\
 & & \mathbf{Poset}\arrow[u, ]\arrow[ddrr,  "H"] && \\
 & &\mathbf{SCpx}\arrow[u, ]\arrow[drr, "H"]&& \\
\mathbf{DiGraph} \arrow[drr, "\K-"] \arrow[rrrr, dashed, "\text{Homologies of digraphs}"]\arrow[urr, "dFl"]\arrow[uurr, "P"] \arrow[uuurr, bend left, "\mathrm{Free}"] & & & & \mathbf{Ab} \\
 & &\K\text{-}\mathbf{Alg}\arrow[urr, "\mathrm{HH}"]&& \\
\end{tikzcd}
\]
where we synthetically show the construction of homology theories of digraphs as representations of the category $\mathbf{DiGraph} $. Note that path homology does not appear in the picture because it is not yet known if it factors through simplicial complexes/algebras/posets/categories.

We wish to spend a few more words on the path category and related homology theories, as these are close to the Hochschild homology of path algebras.
The path category $\mathbf{Path}(\GG)$ associated to a directed graph $\GG$ is the category freely generated by the paths of $\GG$.
 There is a forgetful functor from the category $\mathbf{Cat}$ of small categories and functors to the category of quivers (thought of as directed graphs with loops and multiple edges). Such forgetful functor has a left adjoint,  the functor sending a quiver to the free category on that quiver (see \emph{e.g.},~\cite[Section~II.7]{maclane:71}). As each directed graph  is, in particular, also a quiver, one gets a functor 
\begin{equation}\label{eqfreecat}
\mathrm{Free}\colon\mathbf{DiGraph}\to\mathbf{Cat}
\end{equation}
from the category of digraphs to the category of small categories.
It is easy to see that $\mathrm{Free}(\GG)$ is in fact the path category $\mathbf{Path}(\GG)$. Then, the functor in \eqref{eqfreecat} allows one to use homology theories of categories for obtaining new  invariants of digraphs. The naive idea of directly computing the homology groups of the category~$\mathbf{Path}(\GG)$ (with constant coefficients), 
would not give new information due to the following (see \cite[Ex.~4.3]{citterio}):

\begin{proposition}\label{prop:hogr=homcat}
	The classifying space $|{N}(\mathbf{Path}(\GG))|$ of the path category  of a directed graph $\GG$ has the homotopy type of the geometric realization $|\GG|$ of the digraph $\GG$.
\end{proposition}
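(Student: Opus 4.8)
The plan is to realize $|\GG|$ as a subcomplex of the classifying space $|N(\mathbf{Path}(\GG))|$ and then collapse the latter onto it, exploiting the one property that distinguishes the free category: every non-identity morphism of $\mathbf{Path}(\GG)$ factors \emph{uniquely} as a composite of generating edges. First I would recall the cell structure of the classifying space: its non-degenerate $n$-cells are the strings $(f_1,\dots,f_n)$ of composable non-identity morphisms of $\mathbf{Path}(\GG)$, each $f_i$ being a path of length $\geq 1$, with the standard nerve face maps ($d_0$ deletes $f_1$, $d_n$ deletes $f_n$, and $d_i$ composes $f_i$ with $f_{i+1}$ for $0<i<n$). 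The canonical inclusion $\GG\to\mathbf{Path}(\GG)$, sending vertices to objects and edges to length-$1$ paths, identifies $|\GG|$ with the subcomplex whose cells are the $0$-cells (the vertices) together with the $1$-cells $(e)$ for $e$ a single edge; since $d_1(e)$ and $d_0(e)$ are the source and target of $e$, this subcomplex is exactly the geometric realization of $\GG$.

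The heart of the argument is a discrete Morse matching on the cells of $|N(\mathbf{Path}(\GG))|$. Given a cell $(f_1,\dots,f_n)$, inspect the first morphism $f_1$. If $\ell(f_1)\geq 2$, write $f_1=g\circ e$ with $e$ its first generating edge and $g$ of length $\geq 1$, and match $(f_1,\dots,f_n)$ \emph{upward} with the $(n+1)$-cell $(e,g,f_2,\dots,f_n)$, whose $d_1$-face composes $e$ and $g$ and so recovers $(f_1,\dots,f_n)$. If instead $\ell(f_1)=1$ and $n\geq 2$, match $(f_1,\dots,f_n)$ \emph{downward} with its $d_1$-face $(f_2\circ f_1,f_3,\dots,f_n)$; since $f_1$ is a single edge, $f_2\circ f_1$ has length $\geq 2$ with first edge $f_1$, so the up-rule applied to this face returns $(f_1,\dots,f_n)$. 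Unique factorization makes the two rules mutually inverse, so the matching is a well-defined involution, and the two cases are disjoint and exhaust all non-critical cells. The only critical cells are the $0$-cells and the $1$-cells $(e)$ with $e$ a single edge, i.e.\ precisely the cells of the subcomplex $|\GG|$.

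It then remains to check that the matching is \emph{acyclic}, meaning the modified Hasse diagram has no closed $V$-paths; this is the step I expect to be the main obstacle. The difficulty is that the matching only rearranges a string near its front (splitting off or absorbing a leading edge and shifting the remaining entries), so establishing acyclicity requires producing a genuine monovariant that strictly decreases along every admissible zig-zag. A natural candidate is the lexicographic order on the length sequence $(\ell(f_1),\ell(f_2),\dots)$, or the pair consisting of $\ell(f_1)$ together with the number of entries; verifying that one of these is monotone under the down-then-up steps of a $V$-path is the technical crux of the proof.

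Granting acyclicity, the standard consequence of discrete Morse theory for the CW structure of a classifying space gives a deformation retraction of $|N(\mathbf{Path}(\GG))|$ onto the subcomplex spanned by its critical cells, namely $|\GG|$, which is the desired conclusion. As an independent sanity check, the argument specializes correctly to the single-loop digraph, where $\mathbf{Path}(\GG)$ is the free monoid on one generator and both sides are $S^1$; and one may cross-verify on invariants, since the Morse complex reduces to the cellular chain complex of $\GG$ and $\pi_1$ of both spaces is the free group on the edges outside a spanning tree.
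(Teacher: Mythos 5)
The paper does not actually prove this statement; it only cites \cite[Ex.~4.3]{citterio}, so your argument is necessarily a different route, and it is essentially correct and has the virtue of being self-contained. Your matching on the nondegenerate simplices of $N(\mathbf{Path}(\GG))$ is well defined and involutive exactly as you describe, with critical cells the vertices and the one-edge $1$-cells. The step you flag as the main obstacle --- acyclicity --- is in fact the easiest part and closes with the first monovariant you name, namely $\ell(f_1)$ alone: if $\alpha=(f_1,\dots,f_n)$ is matched upward with $\beta=(e,g,f_2,\dots,f_n)$, where $f_1=g\circ e$, then every face of $\beta$ other than $d_1\beta=\alpha$ and $d_0\beta$ still has the length-one morphism $e$ as its first entry, hence is matched downward (or is the critical cell $(e)$), so a $V$-path can only continue through $d_0\beta=(g,f_2,\dots,f_n)$, whose first entry has length $\ell(f_1)-1$. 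Thus the length of the first entry strictly decreases along every $V$-path and no closed $V$-path exists. The one point you should make explicit is that you are applying discrete Morse theory to a simplicial set rather than a simplicial complex: the nerve of $\mathbf{Path}(\GG)$ is not a regular CW complex in general (already for a single loop the $1$-cell $(e)$ has both endpoints at the same vertex), so you need either the simplicial-set version of the theory or the observation that in each matched pair the face $d_1\beta$ occurs exactly once among the faces of $\beta$ (its first entry has length at least $2$, while $d_0\beta$ begins with the strictly shorter $g$ and all remaining faces begin with $e$), which makes it a regular free face and legitimizes the elementary collapse. With that remark added, and the sanity checks you already give, your proof is complete.
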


However, more interesting homology theories  arise when considering the homology of categories with coefficients in  functors \cite[App.~2]{Gabriel1967CalculusOF}. In this framework, one considers the homology groups of $\mathbf{Path}(\GG)$ with twisted coefficients in the same spirit as in usual homology of topological spaces but with local coefficients (see also \cite[Section~1]{quillenI}).
Remarkably, this point of view has been used in node embedding and community detection problems~\cite{kaul}.

\section{Connectivity structures and digraphs}\label{sec:connectivity_homology}

As outlined in Section \ref{homologies}, homology theories of digraphs are flexible and versatile tools, able to capture various topological information from the input graphs. However, as we will show with examples in Section~\ref{sec:whyt}, these homology theories might miss some combinatorial information inherent in the digraph structure. Therefore, with the aim of capturing both the topology and combinatorics of digraphs, in this section we will study some special cases of \emph{connectivity digraphs}, \emph{i.e.},~digraphs constructed by using some combinatorial information of $\GG$. For example, connectivity digraphs can be described by edges, paths, sets of edges, or cliques, together with their incidence relations. 

We will focus on two closely related \emph{simplicial} connectivity approaches and show that they capture the combinatorial information described by edges and ordered cliques in a digraph.
The first one, developed by the second author in \cite{Riihimaki_simplicial_connectivities} and briefly recalled in Section~\ref{sec:q_analysis}, generalizes the classical \emph{\(q\)-connectivity} analysis \cite{Atkin1972,Atkin1974_book} to the context of ordered simplicial complexes. This construction is based on ordered cliques sharing \(q\)-faces respecting a chosen directionality condition. The connectivity digraphs constructed have the additional structure of a preordered set (Definition \ref{def:q_near_directed}).
Then in Section~\ref{secnpathgr}, we investigate the particular case of connectivity digraphs built with ordered cliques and codimension~$1$ incidence relations. The connectivity digraph structure is induced from a total order of simplicial face maps. This choice is shown to generalize the notion of line digraphs  (going from the combinatorics of edges to the combinatorics of higher simplices), giving what we call here the \emph{$n$-path digraphs} -- cf.~Definition~\ref{npdigraph}.

\subsection{Connectivity digraphs}\label{sec:conn graphs}

Goal of this subsection is to introduce the concept of connectivity digraphs. For a digraph $\GG$, a connectivity digraph associated to $\GG$ is meant to encapsulate some combinatorial information of $\GG$. In order to make it formal, we start with the notion of a connectivity structure. 
\begin{definition}\label{def:conn_struct}
	A \emph{connectivity structure} is a triple $(\GG,\mathfrak{F}, {A})$ consisting of a digraph $\GG$, and a non-empty family $\mathfrak{F}$ of subgraphs of $\GG$ together with a $\{0,1\}$-valued function $A\colon \mathfrak{F}\times \mathfrak{F}\to \{0,1\}$.
\end{definition}

Note that there are no additional requirements on the map $A$; roughly, a connectivity structure is a way to encode the connectivity properties of families of subsets of $\GG$, all at once. 
\begin{remark}
    For a given connectivity structure $(\GG,\mathfrak{F}, {A})$ there is an associated well-defined digraph $E_{\mathfrak{F}}\GG$ (possibly with self-loops) constructed as follows: the set of vertices of $E_{\mathfrak{F}}\GG$ is the family $\mathfrak{F}$, and for $H_1$, $H_2$ in $\mathfrak{F}$, there is a directed 
edge $(H_1,H_2)$ in $E_{\mathfrak{F}}\GG$ if, and only if, 
$A(H_1,H_2)=1$.
\end{remark}
We can now give the formal definition of connectivity digraphs.
\begin{definition}\label{def:conn digraphs}
	A \emph{connectivity digraph} is the directed graph $E_{\mathfrak{F}}\GG$ associated to a connectivity structure $(\GG,\mathfrak{F}, {A})$.

	A \emph{morphism}  of connectivity structures $(\GG,\mathfrak{F}, {A})\to (\GG',\mathfrak{F}', {A}')$ is a morphism of digraphs $\Phi\colon \GG\to\GG'$ inducing a function $\phi\colon\mathfrak{F}\to \mathfrak{F}'$ such that $A(H_1,H_2)=1$ implies $A'(\phi(H_1),\phi(H_2))=1$.
\end{definition}

Connectivity structures and morphisms of connectivity structures form a category  where compositions are induced by compositions of morphisms of digraphs. It is also easy to see that the map which associates to a connectivity structure~$(\GG,\mathfrak{F}, {A})$ the connectivity digraph~$E_{\mathfrak{F}}\GG$ is functorial with respect to morphisms of connectivity structures. 

The context of connectivity structures is quite general. Different connectivity structures can encode various combinatorial information of digraphs. Our main motivation for introducing connectivity structures is that the associated connectivity digraphs provide domains for the homology theories described in Section~\ref{homologies}, and then one can extend the class of digraph invariants.

We remark here that a definition similar to Definition~\ref{def:conn digraphs} has previously appeared in \cite[Section~6]{Grigoryan} in the form of a connectivity multigraph of a CW-complex, and meant to extend Atkin's connectivity graphs (see Section \ref{sec:q_analysis}): the vertices of the multigraph are the $n$-cells and two vertices are adjacent if the corresponding cells share a face. This graph is given the structure of a directed graph by first numbering all the cells, and then using this enumeration for describing the directions of the edges. 
Our construction of connectivity digraphs, and our two main examples of simplicial connectivities provided in Section~\ref{sec:q_analysis} and Section~\ref{secnpathgr}, do not require a predetermined enumeration of the vertices, making the definition of connectivity digraphs more intrinsic. We also give a more immediate recipe for connectivity digraphs; in fact, Definition~\ref{def:conn_struct} essentially provides the adjacency matrices.

\subsection{{Motivating examples}}\label{sec:whyt}

We start by motivating our combinatorial constructions, provided in Section~\ref{sec:q_analysis} and Section~\ref{secnpathgr}, with some examples. Specifically we construct  non-isomorphic digraphs, such that  all the homology theories introduced in Section~\ref{homologies} fail to distinguish them. Then, we will see that the same homology theories, applied to the connectivity digraphs, are able to distinguish them -- see Example~\ref{exHHdigraps} and Example~\ref{exspheresHH}, proving that encoding the combinatorics is quite helpful. 

\begin{example}\label{expaths}
	Consider the following directed graphs:
	\begin{center}
		\[\GG_1=
		\begin{tikzpicture}[baseline=(current bounding box.center)]
		\tikzstyle{arc}=[shorten >= 3pt,red, shorten <= 3pt,->, thick]
		
		\node[] (0) at  (-2,-1) {0};
		\node[] (1) at  (-2,1) {1};
		\node[] (2) at  (0,-1) {2};
		\node[] (3) at  (2,1) {$3$};
		\node[] (4) at (2,-1) {4};
		\draw[arc] (0) to (1);
		\draw[arc] (0) to (2);
		\draw[arc] (2) to (4);
		\draw[arc] (1) to (2);
		\draw[arc] (3) to (1);
		\draw[arc] (3) to (2);
		\draw[arc] (3) to (4);
		\end{tikzpicture}
		\quad \text{ and } \quad \GG_2=
		\begin{tikzpicture}[baseline=(current bounding box.center)]
		
		\tikzstyle{arc}=[shorten >= 3pt,red, shorten <= 3pt,->, thick]			
		\node[] (0) at  (-2,-1) {0};
		\node[] (1) at  (-2,1) {1};
		\node[] (2) at  (0,-1) {2};
		\node[] (3) at  (2,1) {$3$};
		\node[] (4) at (2,-1) {4};
		\draw[arc] (0) to (1);
		\draw[arc] (0) to (2);
		\draw[arc] (2) to (4);
		\draw[arc] (1) to (2);
		\draw[arc] (1) to (3);
		\draw[arc] (2) to (3);
		\draw[arc] (4) to (3);
		\end{tikzpicture}
		\]
	\end{center}
	The two digraphs are not isomorphic. For example, the total degree of the vertex $2$  in $\GG_1$ is $4$ with out-degree $1$, but there are no vertices of out-degree $1$ and total degree $4$ in $\GG_2$. 
	First, observe that the directed flag complexes associated to  $\GG_1$ and $\GG_2$ are  both contractible. Hence, the associated simplicial homology groups are isomorphic.

	Following Section~\ref{secpathhom}, we find the $\partial$-invariant paths for \(\GG_1\) and \(\GG_2\): $\Omega_2(\GG_1)=\{e_{012}, e_{312}, e_{324}\}$ and $\Omega_3(\GG_1)=\emptyset$, and $\Omega_2(\GG_2)=\{e_{012}, e_{123}, e_{243}, e_{013}-e_{023}\}$ and $\Omega_3(\GG_2)=\{e_{0123}\}$, and all the other $\Omega_n$, with $n\geq 3$, are empty. The \(\Omega_1\) and \(\Omega_0\) are always spanned by the edges and vertices, respectively. Therefore, we get the chain complexes:
	\[
	(\Omega_*(\GG_1),\partial)\coloneqq 0\to \Omega_2(\GG_1)\to\Omega_1(\GG_1)\to\Omega_0(\GG_1),
	\]
	\[
	(\Omega_*(\GG_2),\partial)\coloneqq 0\to\Omega_3(\GG_2)\to \Omega_2(\GG_2)\to\Omega_1(\GG_2)\to\Omega_0(\GG_2).
	\]
	The homology groups of these chain complexes are both trivial and concentrated in degree $0$, with the only non-trivial path homology groups $\mathrm{PH}_0(P(\GG_1))\cong \K\cong \mathrm{PH}_0(P(\GG_2))$.
	
	In the case of the Hochschild homology of the path algebras, as both graphs are acyclic, we can use Theorem~\ref{thmhapp}. This gives us isomorphic \(\K\)-vector spaces in all degrees (with both $\dim_{\K} \mathrm{HH}_1(\K\GG_1)$ and $\dim_{\K} \mathrm{HH}_1(\K\GG_2)$ equal to $7$).
\end{example}

Remarkably, the same homology theories can distinguish the associated line digraphs --  the line digraphs having $2$ and $1$ connected components, respectively -- suggesting that  the directed combinatorics plays an important role. We give another example:

\begin{example}\label{exspheres}
	Consider the following digraphs with reciprocal edges:
	\begin{center}
		\begin{tikzpicture}
			\tikzset{vertex/.style = {minimum size=1.5em}}
			\tikzstyle{arc}=[shorten >= 1pt,red, shorten <= 1pt,->,thick]
			
			\node[left] at (-1.2,0) {\(\mathcal{S}_1=\)};
			\node[vertex] (0) at  (0,1.3) {0};
			\node[vertex] (1) at  (-1,0) {1};
			\node[vertex] (2) at  (1,0) {2};
			\node[vertex] (3) at  (0,-1.3) {3};
			\draw[arc] (0) to (1);
			\draw[arc] (0) to (2);
			\draw[arc] (2) to[bend left] (1);
			\draw[arc] (1) to[bend left] (2);
			\draw[arc] (2) to (3);
			\draw[arc] (1) to (3);
			
			\node[left] at (3.8,0) {\(\mathcal{S}_2=\)};
			
			\node[vertex] (0) at  (5,1.3) {0};
			\node[vertex] (1) at  (4,0) {1};
			\node[vertex] (2) at  (6,0) {2};
			\node[vertex] (3) at  (5,-1.3) {3};
			\draw[arc] (0) to (1);
			\draw[arc] (0) to (2);
			\draw[arc] (2) to[bend left] (1);
			\draw[arc] (1) to[bend left] (2);
			\draw[arc] (3) to (1);
			\draw[arc] (3) to (2);
		\end{tikzpicture}
	\end{center}
 The associated directed flag complexes in both cases are topologically $2$-spheres, hence their homology groups are isomorphic. A computation similar to the one  in the previous example shows that the associated path homologies are also isomorphic. The Hochschild homology groups of the path algebras associated to the graphs $\mathcal{S}_1$ and $\mathcal{S}_2$ are also isomorphic (in degree $1$ of infinite dimension over $\K$, both digraphs having oriented cycle). 
	\end{example}
 
Motivated by these examples, we proceed with investigating two examples of (simplicial) connectivity digraphs.

\subsection[q-connectivity]{\(q\)-connectivity}\label{sec:q_analysis}
Our first connectivity structure is an extension of Atkin's \(Q\)-analysis \cite{Atkin1972} as developed in \cite{Riihimaki_simplicial_connectivities}. We briefly summarise this theory as needed for the purposes of this paper, and guide the reader to the previous references for more in-depth expositions. The essential idea in Atkin's work is the generalisation of edge path connectivity of a simplicial complex to sequences of connected simplices through sharing of faces of certain dimension. 

\begin{definition}\leavevmode\label{def:q_connectivity}
Let \(K\) be a simplicial complex. Two simplices \(\sigma\) and \(\tau\) of \(K\) are \emph{\(q\)-near}, if they share a \(q\)-face. Two simplices \(\sigma\) and \(\tau\) of \(K\) are \emph{\(q\)-connected}, if there is a sequence of simplices in \(K\),
\[\sigma=\alpha_0,\alpha_1,\alpha_2,\dots,\alpha_n,\alpha_{n+1}=\tau,\]
such that any two consecutive ones are \(q\)-near. The sequence of simplices is called a \emph{\(q\)-connection} between \(\sigma\) and \(\tau\). 
\end{definition}
Similarly to the property of being path connected, we say that the complex \(K\) is \(q\)-connected if any two simplices in \(K\) of dimension greater than or equal to \(q\) are \(q\)-connected. The notion of \(q\)-connectivity is an equivalence relation on the set \(K_q\) of simplices of dimension \(q\) and higher, for \(0 \le q \le \dim(K)\). 

The aim of \(Q\)-analysis is to associate a simplicial complex with its \(q\)-connectivity equivalence classes, or its \(q\)-connected components. Note that if a simplex \(\sigma\) is maximal in $K$ with respect to inclusion and \(\text{dim}(\sigma) = q\), then \(\sigma\) is \(q\)-connected only to itself; hence every maximal \(q\)-simplex generates its own equivalence class. For each \(q\) the equivalence classes encode the connectivity information of the \(q\)-upper skeleton of \(K\). A related notion was introduced in \cite{clique_communities} to study community structures in networks.
\begin{definition}\label{def:conncliques}
	Let \(\GG\) be a graph and \(n \geq 2\) a natural number. Two \(n\)-cliques in \(\GG\) are connected if there is a sequence of \(n\)-cliques of \(\GG\) such that any two consecutive cliques share \(n-1\) vertices. A \emph{\(n\)-clique community} of \(\GG\) is a maximal set of pairwise connected \(n\)-cliques.
\end{definition}

The \(n\)-cliques of a graph are in correspondence with the \((n-1)\)-simplices in the associated flag complex. The \(n\)-clique communities are obtained from the \(Q\)-analytical information of the flag complex. Note that we can put the \(n\)-cliques as vertices of a graph with edges between vertices if the associated cliques share \(n-1\) vertices. This leads us to define our first connectivity graph.
\begin{definition}\label{def:q-graph}
	The \(q\)-\emph{graph} of a simplicial complex \(K\) has as its vertices the simplices in \(K_q\) and edges between pairs of \(q\)-near simplices. 
\end{definition}

Standard \(Q\)-analysis as outlined above fails to take into account directionality in the case of digraphs and directed flag complexes. Consider the cycle and star digraphs in the figure below. As undirected graphs, or 1-dimensional simplicial complexes, they are indistinguishable by \(q\)-connectivity information: both contain one component so they are 0-connected, and the maximal 1-simplices each form their own 1-connected components in both cases.
\begin{center}
	\begin{tikzpicture}
	\tikzstyle{point}=[circle,thick,draw=black,fill=black,inner sep=0pt,minimum width=2pt,minimum height=2pt]
	\tikzstyle{arc}=[shorten >= 8pt,shorten <= 8pt,->, draw=red, thick]
	
	\node[] at (-1.2,-1) {1};
	\node[] at (1.2,-1) {2};
	\node[] at (0,1) {0};
	
	\draw[arc] (0,1) to (-1.2,-1);
	\draw[arc] (-1.2,-1) to (1.2,-1);
	\draw[arc] (1.2,-1) to (0,1);
	
	\node[] at (5,-0.3) {a};
	\node[] at (3.8,-1) {b};
	\node[] at (6.2,-1) {c};
	\node[] at (5,1) {d};		
	
	\draw[arc] (3.8,-1) to (5,-0.3);
	\draw[arc] (6.2,-1) to (5,-0.3);
	\draw[arc] (5,1) to (5,-0.3);
	\end{tikzpicture}
\end{center}
We therefore introduce a refined version of \(Q\)-analysis that is sensitive to the directionality of simplices, or directed cliques in directed graphs. We do this by imposing directions through face maps.
\begin{definition}\label{def:di_hat}
	Let \(\sigma\) be an \(n\)-simplex. We denote by \(\widehat{d_i}\) the face map
	\[\widehat{d_i}(\sigma) = 
	\begin{cases}
	(v_0,\dots,\widehat{v_i},\dots, v_n), \ \text{if } i < n,\\
	(v_0,\dots,v_{n-1}, \widehat{v_n}), \ \text{if } i \geq n.
	\end{cases}\]
\end{definition}
The face map \(\widehat{d_i}\) now makes sense in any dimension since it always removes the vertex at position \(\text{min}\{i,\text{dim}(v)\}\). The reason to modify the standard face map in this fashion is due to the fact \(q\)-connectivity looks at all the simplices of dimension \(q\) and higher.
\begin{definition}\label{def:q_near_directed}
	For an ordered simplicial complex \(K\), let \((\sigma,\tau)\) be an ordered pair of simplices \(\sigma \in K_s\) and \(\tau \in K_t\), where \(s,t \geq q\). Let \((\widehat{d_i},\widehat{d_j})\) be an ordered pair of the \(i\)- and \(j\)-face maps. Then \((\sigma,\tau)\) is \emph{\(q\)-near along \((\widehat{d_i},\widehat{d_j})\)} if either of the following conditions is true:
	\begin{enumerate}
		\item \(\sigma \hookrightarrow \tau,\)
		\item \(\widehat{d_i}(\sigma) \hookleftarrow \alpha \hookrightarrow \widehat{d_j}(\tau),\) for some \(q\)-simplex \(\alpha \in K\).
	\end{enumerate}
	The ordered pair \((\sigma,\tau)\) of simplices of \(K\) is \emph{\(q\)-connected along \((\widehat{d_i},\widehat{d_j})\)} if there is a sequence of simplices in \(K\),
	\[\sigma=\alpha_0,\alpha_1,\alpha_2,\dots,\alpha_n,\alpha_{n+1}=\tau,\]
	such that any two consecutive ones are \(q\)-near along \((\widehat{d_i},\widehat{d_j})\). The sequence of simplices is called a \emph{\(q\)-connection along \((\widehat{d_i},\widehat{d_j})\)} between \(\sigma\) and \(\tau\). We simply write this connection as \((\sigma \alpha_1 \alpha_2 \dots \alpha_n \tau)\).
\end{definition}
We will call the above connection \((q,\widehat{d_i},\widehat{d_j})\)-connection, when the choices of \(q\) and directions \(\widehat{d_i}\) and \(\widehat{d_j}\) are made, and similarly we say \((q,\widehat{d_i},\widehat{d_j})\)-near. 

The directed \((q,\widehat{d_i},\widehat{d_j})\)-connectivity is a preorder on the set of directed cliques \(K_q\). By the classical Alexandroff correspondence, preorders and topological spaces are in bijection \cite{Barmak_book}. The \((q,\widehat{d_i},\widehat{d_j})\)-preorders thus endow a directed graph with a collection of new topological spaces. Up to homotopy it is enough to study partial orders obtained by condensing the preorders (Definition \ref{def:condensation} and the discussion after). The homotopy types of partial orders can then be studied through their order complexes, i.e.\ taking the nerve. 

As our main connectivity digraph stemming from \(q\)-connectivity we take the Hasse diagram form of the \((q,\widehat{d_i},\widehat{d_j})\)-connectivity preorder.
\begin{definition}\
    For an ordered simplicial complex \(K\) the vertices in the \((q,\widehat{d_i},\widehat{d_j})\)-digraph, or simply \(q\)-digraph, are the simplices in \(K_q\), and for two vertices \(\sigma\) and \(\tau\) there is a directed edge \((\sigma,\tau)\) when the pair \((\sigma,\tau)\) is \((q,\widehat{d_i},\widehat{d_j})\)-near.
\end{definition}
As an illustrative example we see that our new connectivity digraph can be used to distinguish prior Example \ref{exspheres}; we refer the reader to \cite{Riihimaki_simplicial_connectivities} for a more detailed investigations of these connectivity digraphs.
\begin{example}\label{ex:q_graphs_spheres}
The full \((1,\widehat{d_1},\widehat{d_2})\)-digraphs of the 2-spheres in Example \ref{exspheres} are shown below.
	\begin{center}
		\begin{tikzpicture}
		\tikzstyle{point}=[circle,thick,draw=black,fill=black,inner sep=0pt,minimum width=2pt,minimum height=2pt]
		\tikzstyle{arc}=[shorten >= 12pt,shorten <= 12pt,->, thick]
		
		\coordinate (01) at (1,1.75);
		\coordinate (02) at (1,1);
		\coordinate (13) at (1,-2.5);
		\coordinate (23) at (1,-3.25);
		\coordinate (12) at (-1.25,-0.75);
		\coordinate (21) at (3.25,-0.75);
		\coordinate (012) at (0,0);
		\coordinate (021) at (2,0);
		\coordinate (123) at (0,-1.5);
		\coordinate (213) at (2,-1.5);		
		
		\node[] at (01) {(01)};
		\node[] at (02) {(02)};
		\node[] at (13) {(13)};
		\node[] at (23) {(23)};
		\node[] at (12) {(12)};
		\node[] at (21) {(21)};
		\node[] at (012) {(012)};
		\node[] at (021) {(021)};
		\node[] at (123) {(123)};
		\node[] at (213) {(213)};
		
		\draw[arc] (012) to (123);
		\draw[arc] (021) to (213);
		\draw[arc] (02) to (012);
		\draw[arc] (02) to (021);
		\draw[arc] (12) to (012);
		\draw[arc] (12) to (123);	
		\draw[arc] (21) to (021);
		\draw[arc] (21) to (213);
		\draw[arc] (13) to (123);
		\draw[arc] (13) to (213);
		\draw[arc,bend right=20] (01) to (012);
		\draw[arc,bend left=20] (01) to (021);
		\draw[arc,bend left=20] (23) to (123);
		\draw[arc,bend right=20] (23) to (213);
		\draw[arc,bend right=20] (021) to (012);
		\draw[arc,bend right=20] (012) to (021);
		
		\coordinate (01) at (7.5,1.75);
		\coordinate (02) at (7.5,1);
		\coordinate (31) at (7.5,-2.5);
		\coordinate (32) at (7.5,-3.25);
		\coordinate (12) at (5.25,-0.75);
		\coordinate (21) at (9.75,-0.75);
		\coordinate (012) at (6.5,0);
		\coordinate (021) at (8.5,0);
		\coordinate (312) at (6.5,-1.5);
		\coordinate (321) at (8.5,-1.5);
		
		\node[] at (01) {(01)};
		\node[] at (02) {(02)};
		\node[] at (31) {(31)};
		\node[] at (32) {(32)};
		\node[] at (12) {(12)};
		\node[] at (21) {(21)};
		\node[] at (012) {(012)};
		\node[] at (021) {(021)};
		\node[] at (312) {(312)};
		\node[] at (321) {(321)};
		
		\draw[arc] (02) to (012);
		\draw[arc] (02) to (021);
		\draw[arc] (12) to (012);
		\draw[arc] (12) to (312);
		\draw[arc] (21) to (021);
		\draw[arc] (21) to (321);	
		\draw[arc] (31) to (312);
		\draw[arc] (31) to (321);
		\draw[arc,bend right=20] (01) to (012);
		\draw[arc,bend left=20] (01) to (021);
		\draw[arc,bend left=20] (32) to (312);
		\draw[arc,bend right=20] (32) to (321);
		
		\draw[shorten >= 18pt,shorten <= 18pt,->, thick,bend right=20] (021) to (012);
		\draw[shorten >= 18pt,shorten <= 18pt,->, thick,bend right=20] (012) to (021);
		\draw[shorten >= 18pt,shorten <= 18pt,->, thick,bend right=20] (321) to (312);
		\draw[shorten >= 18pt,shorten <= 18pt,->, thick,bend right=20] (312) to (321);
		\end{tikzpicture}
	\end{center}
	The connectivity digraphs are different between the spheres. Passing to condensations and order complexes of the associated \((1,\widehat{d_1},\widehat{d_2})\)-preorders, the homotopy type of the left sphere is a wedge of circles \(S^1 \vee S^1\), while that of the right sphere is \(S^1\). The \(q\)-connectivity therefore assigns the underlying digraphs with new homotopy types that distinguish them.
\end{example}

\subsection{The \emph{n}-path digraph}\label{secnpathgr}
Our second example of connectivity digraphs is given by the \emph{$n$-path digraphs} $\{\mathcal{PG}^{(n)}\}_{n\in \N}$. These are defined as the digraphs with  the ordered $(n+1)$-cliques of $\GG$ as vertices, and with directed edges given by their incidence relations (see Definition~\ref{npdigraph}). The construction is a generalization of the line digraph (Definition \ref{deflinegr}); furthermore, we will show that it can be promoted to an endofunctor on the category of digraphs~$\mathbf{Digraph}$ -- \emph{cf.}~Theorem~\ref{functoriality}.

Let $n$ be a positive natural number and let $\GG$ be a directed graph. We start by rephrasing the concept of $q$-graph of Definition~\ref{def:q-graph}, in the specific case in which the simplicial complex~$K$ is  the directed flag complex of a digraph~$\GG$, the simplices have all the same fixed dimension, and the relation is the $1$-codimensional \(q\)-nearness.

\begin{definition}\label{npgraph2}
	Let $\GG$ be a digraph and let $\mathrm{dFl}(\GG)$ be its associated directed flag complex.
	The \emph{$n$-path graph} $\mathcal{G}^{(n)}$ associated to $\GG$ is the graph whose vertices are the $n$-simplices of $\mathrm{dFl}(\GG)$, and such that two vertices $\sigma$ and $\tau$ are connected by an  edge whenever $\sigma$ and $\tau$ share a common $(n-1)$-face.
\end{definition}
\begin{observation}\label{lem:npgr}
Note that Definition \ref{npgraph2} gives the underlying graph for the \(n\)-clique communities (Definition~\ref{def:conncliques}). The name {$n$-path graph} is then justified by the fact that simple paths in $\mathcal{G}^{(n)}$ correspond to ordered $(n+1)$-cliques of $\GG$, consecutively connected by common ordered \(n\)-cliques. If $\GG$ is a digraph, then the $1$-path graph $\mathcal{G}^{(1)}$ associated to $\GG$ is nothing but the line graph $\mathcal{L}(\GG)$ of the underlying undirected graph associated to $\GG$ -- cf.~Definition~\ref{deflinegr}.
\end{observation}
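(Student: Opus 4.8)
The plan is to prove all three assertions by directly unwinding the definitions, so that each reduces to elementary bookkeeping about the vertex sets of cliques. Throughout I use Definition~\ref{defFl}, which identifies the $n$-simplices of $\mathrm{dFl}(\GG)$ with the ordered $(n+1)$-cliques of $\GG$, together with the fact that an $(n-1)$-face of such a simplex $\sigma=(v_0,\dots,v_n)$ is obtained by deleting a single vertex.

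First I would establish the adjacency dictionary. Since deleting one vertex from an ordered $(n+1)$-clique yields an ordered $n$-clique on $n$ of its $n+1$ vertices, two $n$-simplices $\sigma$ and $\tau$ share a common $(n-1)$-face if and only if the corresponding $(n+1)$-cliques have exactly $n$ vertices in common, carrying the induced order. This is precisely the adjacency relation of Definition~\ref{def:conncliques} for cliques of size $n+1$, which by definition share $(n+1)-1=n$ vertices; hence the edges of $\mathcal{G}^{(n)}$ record exactly this relation, connectedness in $\mathcal{G}^{(n)}$ coincides with the connectivity of Definition~\ref{def:conncliques}, and the connected components of $\mathcal{G}^{(n)}$ are the associated clique communities. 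This settles the first assertion.

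The justification of the name is then immediate from this dictionary: a simple path $\sigma_0,\sigma_1,\dots,\sigma_k$ in $\mathcal{G}^{(n)}$ is, by Definition~\ref{npgraph2}, a sequence of distinct vertices with consecutive terms adjacent, which under the dictionary is exactly a sequence of distinct ordered $(n+1)$-cliques in which each consecutive pair $\sigma_i,\sigma_{i+1}$ is glued along a common ordered $n$-clique. For the identification with the line graph when $n=1$, I would specialise: the vertices of $\mathcal{G}^{(1)}$ are the $1$-simplices of $\mathrm{dFl}(\GG)$, i.e.\ the ordered $2$-cliques, which are the edges of $\GG$, and two of them are adjacent precisely when they share a common $0$-face, i.e.\ a common vertex. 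Passing to the underlying undirected graph, this is verbatim the incidence relation defining the classical line graph $\mathcal{L}(\GG)$ recalled just before Definition~\ref{deflinegr}.

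None of the three steps presents a genuine obstacle, since the content is purely definitional; the only point that truly requires care is the vertex-set comparison in the last step when $\GG$ has reciprocal edges. There a reciprocal pair $(v,w),(w,v)$ contributes two distinct ordered $2$-cliques to $\mathcal{G}^{(1)}$ but only a single edge to the underlying undirected graph, so the clean identification holds on the nose only once one restricts to digraphs without reciprocal edges, or else keeps track of this two-to-one collapse. Making this collapse explicit is the sole nontrivial piece of bookkeeping.
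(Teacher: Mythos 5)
Your proposal is correct and follows exactly the definitional unwinding that the paper leaves implicit: this statement is presented as an Observation with no written proof, and the intended content is precisely the dictionary between $(n-1)$-faces of $n$-simplices in $\mathrm{dFl}(\GG)$ and shared ordered $n$-cliques that you spell out, together with the $n=1$ specialisation to the line graph. Your closing caveat about reciprocal edges (a pair $(v,w),(w,v)$ yielding two vertices of $\mathcal{G}^{(1)}$ but one edge of the underlying undirected graph) is a genuine and worthwhile refinement that the paper glosses over; the identification with $\mathcal{L}(\GG)$ as stated holds on the nose only for digraphs without reciprocal edges.
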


Let $\GG$ be a digraph, $n$ be a natural number  and $\mathrm{dFl}(\GG)$ be the associated directed flag complex. Based on Definition \ref{npgraph2}, we now define the $n$-path digraphs as the connectivity digraphs on the set of ordered $(n+1)$-cliques with their incidence relations. The digraph structure is induced from the total order on $\{0,\dots,n\}$ which induces a total order on the associated face maps.

\begin{definition}\label{npdigraph}
	For $n\geq 1$, the \emph{$n$-path digraph} $\mathcal{PG}^{(n)}$ associated to $\GG$ is the directed graph with the $n$-simplices of~$\mathrm{dFl}(\GG)$ as vertices. For vertices $\sigma$ and $\tau$, there is a directed edge $(\sigma,\tau)$ if, and only if, there is an $(n-1)$-simplex~$\alpha$ of~$\mathrm{dFl}(\GG)$ and some \(i,j \in \{0,\dots,n\}\) such that  
	$$d_i(\sigma) = \alpha = d_j(\tau)  \text{, with } i < j.$$ 
	When $n=0$, we set the $0$-th path digraph $\mathcal{PG}^{(0)}$ to be the digraph $ \GG$.
\end{definition}

We investigate some  properties of the $n$-path digraphs; first, note that these path digraphs generalize the notion of line digraphs:

\begin{proposition}\label{obs1pathareline}
	When $n=1$, the $1$-path digraph $\mathcal{PG}^{(1)}$ is isomorphic to the line digraph $\mathcal{L}(\GG)$ of $\GG$.
\end{proposition}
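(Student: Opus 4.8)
The plan is to construct an explicit isomorphism of digraphs between $\mathcal{PG}^{(1)}$ and $\mathcal{L}(\GG)$ by exhibiting a bijection on vertices that respects the edge relations in both directions. First I would identify the vertices: by Definition~\ref{npdigraph}, the vertices of $\mathcal{PG}^{(1)}$ are the $1$-simplices of $\mathrm{dFl}(\GG)$, which are exactly the ordered $2$-cliques of $\GG$, i.e.\ the directed edges of $\GG$. By Definition~\ref{deflinedigr}, the vertices of $\mathcal{L}(\GG)$ are likewise the edges of $\GG$. Thus the identity assignment on edges is the natural candidate for the vertex bijection $\psi$, sending the $1$-simplex $\sigma = [v_0,v_1]$ (corresponding to the edge $(v_0,v_1)$) to the vertex of $\mathcal{L}(\GG)$ labelled by that same edge.

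Next I would verify that $\psi$ is a morphism of digraphs, i.e.\ that edges are sent to edges. Take a directed edge $(\sigma,\tau)$ in $\mathcal{PG}^{(1)}$ with $\sigma = [v_0,v_1]$ and $\tau = [w_0,w_1]$. By definition there is a $0$-simplex $\alpha$ (a single vertex) and indices $i<j$ in $\{0,1\}$ with $d_i(\sigma)=\alpha=d_j(\tau)$; the only possibility is $i=0$, $j=1$. Recalling the face maps, $d_0(\sigma)=[v_1]$ deletes the source vertex and $d_1(\tau)=[w_0]$ deletes the target vertex, so the condition $d_0(\sigma)=d_1(\tau)$ reads $v_1 = w_0$, i.e.\ $t(\sigma)=s(\tau)$. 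This is precisely the condition defining a directed edge $(\psi(\sigma),\psi(\tau))$ in $\mathcal{L}(\GG)$ per Definition~\ref{deflinedigr}. The same chain of equivalences, read in reverse, shows that any edge of $\mathcal{L}(\GG)$ pulls back to an edge of $\mathcal{PG}^{(1)}$, so $\psi^{-1}$ is also a morphism.

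Finally, since $\psi$ is a bijection on vertices and both $\psi$ and $\psi^{-1}$ preserve edges, $\psi$ is an isomorphism of digraphs, which completes the argument. The only point requiring care — and the closest thing to an obstacle — is pinning down the indices $i<j$ in the $n=1$ case and checking that the face-map bookkeeping ($d_0$ deletes the $0$th vertex, hence the source; $d_1$ deletes the target) exactly matches the source/target compatibility $t(e_p)=s(e_q)$ in the definition of the line digraph, including the direction of the induced edge. Everything else is a routine identification of the two vertex sets with the edge set $E(\GG)$.
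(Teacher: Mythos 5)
Your proof is correct and follows essentially the same route as the paper's: identify the vertex sets of both digraphs with $E(\GG)$, note that for $n=1$ the condition $i<j$ forces $i=0$, $j=1$, and check that $d_0(\sigma)=d_1(\tau)$ translates to $t(\sigma)=s(\tau)$, which is exactly the line-digraph adjacency condition. Your version is slightly more explicit about the index bookkeeping, but the argument is the same.
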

\begin{proof}
When $n=1$, the vertices of $\mathcal{PG}^{(1)}$ are the edges of $\GG$. The face map $d_1$ applied to an edge $e$ of $\GG$, gives the source of $e$: $d_1(e)=s(e)$. Analogously, we have the relation $d_0(e)=t(e)$. Consequently, two edges $e$ and $f$ of $\GG$ are connected in $\mathcal{PG}^{(1)}$ by a directed edge $(e,f)$ if, and only if, they share a common vertex $d_0(e)=d_1(f)$ in $\GG$. Then the two constructions in Definition~\ref{deflinegr} and in Definition~\ref{npdigraph} are equivalent. 
\end{proof}	

For a digraph $\GG$, denote by $\mathrm{Cone}(\GG)$ the cone digraph obtained from $\GG$ by adding a new vertex~$v_P$ and, for each vertex $v$ in $\GG$, a new directed edge $(v,v_P)$; see Figure \ref{fig:polycone} for an illustration.

	\begin{figure}[h]
		\centering
		\newdimen\R
		\R=2.0cm
		\begin{tikzpicture}
			\draw[xshift=5.0\R, fill] (270:\R) circle(.05)  node[below] {$v_n$};
			\draw[xshift=5.0\R,fill] (225:\R) circle(.05)  node[below left]   {$v_1$};
			\draw[xshift=5.0\R,fill] (180:\R) circle(.05)  node[left] {$v_2$};
			\draw[xshift=5.0\R,fill] (135:\R) circle(.05)  node[above left] {$v_3$};
			\draw[xshift=5.0\R, fill] (90:\R) circle(.05)  node[above] {$v_4$};
			\draw[xshift=5.0\R,fill] (45:\R) circle(.05)  node[above right] {$v_5$};
			\draw[xshift=5.0\R,fill] (0:\R) circle(.05)  node[right] {$v_6$};
			\draw[xshift=5.0\R,fill] (315:\R) circle(.05)  node[below right] {$v_{n-1}$};
			\draw[xshift=5.0\R,fill] (0,0) circle(.03)  node[above right] {};
			
			\node[xshift=5.0\R] (p) at (0,0) { };
			\node[xshift=5.0\R] (v0) at (270:\R) { };
			\node[xshift=5.0\R] (v1) at (225:\R) { };
			\node[xshift=5.0\R] (v2) at (180:\R) { };
			\node[xshift=5.0\R] (v3) at (135:\R) { };
			\node[xshift=5.0\R] (v4) at (90:\R) { };
			\node[xshift=5.0\R] (v5) at (45:\R) { };
			\node[xshift=5.0\R] (v6) at (0:\R) { };
			\node[xshift=5.0\R] (vn) at (315:\R) { };
			
			\draw[thick, red, -latex] (v0)--(v1);
			\draw[thick, red, -latex] (v1)--(v2);
			\draw[thick, red, -latex] (v2)--(v3);
			\draw[thick, red, -latex] (v3)--(v4);
			\draw[thick, red, -latex] (v4)--(v5);
			\draw[thick, red, -latex] (v5)--(v6);
			\draw[thick, red, -latex] (vn)--(v0);

			\draw[thick, blue, -latex] (v0)--(p);			
			\draw[thick, blue, -latex] (vn)--(p);
			\draw[thick, blue, -latex] (v1)--(p);			
			\draw[thick, blue, -latex] (v2)--(p);
			\draw[thick, blue, -latex] (v3)--(p);			
			\draw[thick, blue, -latex] (v4)--(p);
			\draw[thick, blue, -latex] (v5)--(p);			
			\draw[thick, blue, -latex] (v6)--(p);
			
			\draw[xshift=5.0\R, fill] (292.5:\R) node[below right] {$e_{n-1}$};
			\draw[xshift=5.0\R,fill] (247.5:\R) node[below left] {$e_n$};
			\draw[xshift=5.0\R,fill] (202.5:\R)   node[left] {$e_1$};
			\draw[xshift=5.0\R,fill] (157.5:\R)  node[above left] {$e_2$};
			\draw[xshift=5.0\R, fill] (112.5:\R)   node[above] {$e_3$};
			\draw[xshift=5.0\R,fill] (67.5:\R) node[above right] {$e_4$};
			\draw[xshift=5.0\R,fill] (22.5:\R) node[right] {$e_5$};
			\draw[xshift=4.95\R,fill] (337.5:\R)  node {$\cdot$} ;
			\draw[xshift=4.95\R,fill] (333:\R)  node {$\cdot$} ;
			\draw[xshift=4.95\R,fill] (342:\R)  node {$\cdot$} ;
		\end{tikzpicture}
		\caption{The cone $\mathrm{Cone}(C_n)$ of the coherently oriented cyclic digraph $C_n$.} 
		\label{fig:polycone}
	\end{figure}
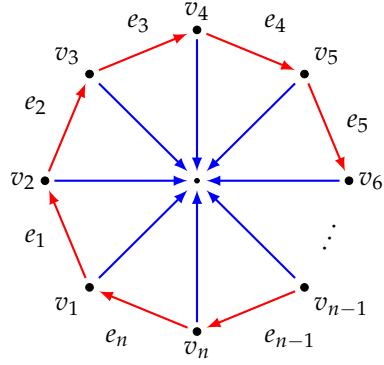

\begin{proposition}\label{prop:cone}
Let $C_n$ be the coherently oriented cyclic digraph on $n$ vertices, with $n\geq 3$. 
Then the the $2$-path digraph of the cone $\mathrm{Cone}(C_n)$  is isomorphic  to $C_n$. \end{proposition}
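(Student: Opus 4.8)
The plan is to compute $\mathcal{PG}^{(2)}(\mathrm{Cone}(C_n))$ directly from Definition~\ref{npdigraph}, by first enumerating all $2$-simplices of $\mathrm{dFl}(\mathrm{Cone}(C_n))$ and then determining their shared codimension-$1$ faces together with the orientations these impose.

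First I would classify the ordered $3$-cliques of $\mathrm{Cone}(C_n)$, since these are exactly the $2$-simplices, hence the vertices of $\mathcal{PG}^{(2)}$. Writing $v_P$ for the apex and $v_1,\dots,v_n$ for the cycle vertices (indices read mod $n$), the edges are the cycle edges $(v_i,v_{i+1})$ and the cone edges $(v_i,v_P)$. A $3$-clique avoiding $v_P$ would be a $3$-clique of $C_n$ itself; for $n\geq 4$ the cycle has no triangles, and for $n=3$ the directed triangle $C_3$ admits no total order realising it as an ordered clique (the edge $(v_3,v_1)$ points the "wrong" way), so there are none. Every $3$-clique therefore contains $v_P$ and consists of two cycle vertices joined by a cycle edge together with $v_P$. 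Since both cone edges point into $v_P$, the apex is always the maximal vertex, and we obtain exactly the $n$ ordered cliques
\[
T_i=(v_i,v_{i+1},v_P),\qquad i=1,\dots,n,
\]
which already matches the vertex count of $C_n$.

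Next I would compute the face maps $d_0(T_i)=(v_{i+1},v_P)$, $d_1(T_i)=(v_i,v_P)$, $d_2(T_i)=(v_i,v_{i+1})$, and record which $1$-simplices are shared between distinct triangles. The cycle-edge face $d_2(T_i)$ occurs only in $T_i$, so it contributes no edges. The cone edge $(v_i,v_P)$ is a face of precisely the two triangles containing $v_i$, namely $T_{i-1}$ (as $d_0(T_{i-1})$) and $T_i$ (as $d_1(T_i)$). Hence the only shared codimension-$1$ faces are the identifications $d_0(T_{i-1})=(v_i,v_P)=d_1(T_i)$.

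Finally I would apply the orientation rule of Definition~\ref{npdigraph}. For the shared face $(v_i,v_P)$ the relevant face indices are $0$ for $T_{i-1}$ and $1$ for $T_i$; since $0<1$ this yields the directed edge $(T_{i-1},T_i)$ and no edge in the reverse direction (which would demand $1<0$). Ranging over $i$, the edge set of $\mathcal{PG}^{(2)}(\mathrm{Cone}(C_n))$ is precisely $\{(T_{i-1},T_i)\}_{i=1}^{n}$, i.e.\ a single coherently oriented $n$-cycle, and the assignment $T_i\mapsto v_{i+1}$ is the desired isomorphism onto $C_n$. The step requiring the most care is the clique enumeration — in particular the $n=3$ case, where one must notice that the directed triangle $C_3$ is \emph{not} a coherently oriented clique — together with consistent mod-$n$ bookkeeping; everything afterwards is forced by the fact that all cone edges point toward the apex, which pins $v_P$ to the last coordinate and thereby determines both which face maps realise each shared face and the resulting edge orientations.
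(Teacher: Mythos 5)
Your proof is correct and follows essentially the same route as the paper's: identify the $2$-simplices as the triangles $(v_i,v_{i+1},v_P)$, observe that the only shared codimension-$1$ faces are the cone edges via $d_0(T_{i-1})=(v_i,v_P)=d_1(T_i)$, and read off the oriented $n$-cycle from the rule $0<1$. You supply more detail than the paper (which leaves the final check to the reader), notably the verification that no ordered $3$-clique avoids the apex even when $n=3$; this is a worthwhile addition but not a different argument.
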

\begin{proof}
For $n\geq 3$, if $C_n$ is the cyclic digraph on $n$ vertices with all the edges coherently oriented, then the cone~$\mathrm{Cone}(C_n)$ has $2$-simplices based at the edges of $C_n$. 
For each edge $(v_i,v_{i+1})$ in~$C_n$, the edge $(v_i,v_P)$ of  $\mathrm{Cone}(C_n)$ can be written as $d_1[v_i,v_{i+1},v_P]=d_0[v_{i-1},v_{i},v_P]$ (where the indices $i$ are taken modulo $n$). Then, it is easy to check that the associated $2$-path digraph is isomorphic to $C_n$.
\end{proof}

The result generalizes by induction to every $m$: let $\mathrm{Cone}^m(\GG)$ denote the $m$-th iterated cone of $\GG$, i.e. $\mathrm{Cone}^m\coloneqq \mathrm{Cone}\circ \dots \circ \mathrm{Cone}$, $m$ times. Then, we have the following:

\begin{proposition}Let $C_n$ be the coherently oriented cyclic digraph on $n$ vertices, with $n\geq 3$. Then,
	\[
	\mathcal{P}^{(m)}\mathrm{Cone}^m(C_n)\cong C_n,
	\]
	the  $m$-path digraph of the $m$-th cone $\mathrm{Cone}^m(C_n)$ of $C_n$  is isomorphic (as a directed graph) to $C_n$. 
\end{proposition}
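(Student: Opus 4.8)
The plan is to prove the statement by induction on $m$, peeling off one cone at a time and bottoming out at the already-established base case, Proposition~\ref{prop:cone}. The engine of the induction is a single reduction lemma: \emph{if $\GG$ is a digraph whose directed flag complex $\mathrm{dFl}(\GG)$ contains no $(k+1)$-simplices, then the assignment $\sigma\mapsto(\sigma,v_P)$, appending the cone apex $v_P$ of $\mathrm{Cone}(\GG)$, defines an isomorphism of digraphs}
\[
\mathcal{PG}^{(k)}(\GG)\xrightarrow{\ \cong\ }\mathcal{PG}^{(k+1)}(\mathrm{Cone}(\GG)).
\]
Once this lemma is available the result follows by descent: since $C_n$ is $1$-dimensional, each $\mathrm{Cone}^{j}(C_n)$ is $(j+1)$-dimensional and hence carries no simplices above dimension $j+1$, so the lemma applies at every stage and strips the cones off one by one until only Proposition~\ref{prop:cone} remains.

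First I would verify that $\sigma\mapsto(\sigma,v_P)$ is a bijection on vertex sets. The crucial observation is that in $\mathrm{Cone}(\GG)$ the apex $v_P$ is a global sink, since every new edge has the form $(v,v_P)$; therefore in any ordered clique containing $v_P$ the apex must occupy the last position. Consequently every $(k+1)$-simplex of $\mathrm{Cone}(\GG)$ is either a $(k+1)$-simplex already present in $\GG$ or else of the form $(\sigma,v_P)$ for a unique $k$-simplex $\sigma$ of $\GG$. The hypothesis that $\GG$ has no $(k+1)$-simplices eliminates the first alternative, so appending the apex is a bijection onto the vertex set of $\mathcal{PG}^{(k+1)}(\mathrm{Cone}(\GG))$.

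Next I would check compatibility with the directed edge relation of Definition~\ref{npdigraph}. Writing $\sigma=(v_0,\dots,v_k)$, the face maps of $(\sigma,v_P)$ decompose as $d_i(\sigma,v_P)=(d_i\sigma,v_P)$ for $i\le k$ and $d_{k+1}(\sigma,v_P)=\sigma$. Hence the common $k$-faces of $(\sigma,v_P)$ and $(\tau,v_P)$ that contain $v_P$ are exactly the cone faces, and $(d_i\sigma,v_P)=(d_j\tau,v_P)$ holds precisely when $d_i\sigma=d_j\tau$ in $\GG$; moreover the positions $i$ and $j$ are unchanged by the appending, so the defining inequality $i<j$ transfers verbatim. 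The only other possible common face is the one omitting $v_P$, which forces $\sigma=\tau$ and so produces no edge between distinct vertices; since no simplex has two coinciding faces, there are no self-loops on either side to reconcile. As the computation is an equivalence, the bijection both preserves and reflects edges, hence is a digraph isomorphism.

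With the lemma in hand the induction closes. Each inductive step applies it to $\GG=\mathrm{Cone}^{j-1}(C_n)$, whose $j$-dimensionality guarantees the no-$(j+1)$-simplices hypothesis, yielding $\mathcal{PG}^{(j+1)}(\mathrm{Cone}^{j}(C_n))\cong\mathcal{PG}^{(j)}(\mathrm{Cone}^{j-1}(C_n))$; iterating down to $j=2$ and invoking Proposition~\ref{prop:cone} gives the claimed isomorphism with $C_n$. I expect the main obstacle to lie in the edge-compatibility step: one must confirm not merely that shared faces in $\GG$ lift to shared faces in $\mathrm{Cone}(\GG)$, but that the total order on the face maps — and therefore the orientation recorded by the condition $i<j$ — survives the appending of the apex intact, and that the extraneous face omitting $v_P$ never contributes an edge between distinct cone-simplices. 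The remaining ingredients (the sink property of $v_P$ and the dimension count for iterated cones) are routine.
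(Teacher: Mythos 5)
Your reduction lemma is correct, and it supplies the induction that the paper only asserts ("the result generalizes by induction") without writing down; the paper's proof of Proposition~\ref{prop:cone} is essentially the $k=1$ instance of your face-map bookkeeping, via the identity $d_1[v_i,v_{i+1},v_P]=d_0[v_{i-1},v_i,v_P]$. The sink property of the apex, the resulting bijection on vertices, the decomposition $d_i(\sigma,v_P)=(d_i\sigma,v_P)$ for $i\le k$ versus $d_{k+1}(\sigma,v_P)=\sigma$, the observation that the face omitting $v_P$ could only be shared with $i=j=k+1$ and hence never produces an edge, and the dimension count $\dim\mathrm{dFl}(\mathrm{Cone}^{j}(C_n))=j+1$ guaranteeing the hypothesis at each stage are all verified correctly.

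The one genuine problem is the indexing of the conclusion. Your chain $\mathcal{PG}^{(j+1)}(\mathrm{Cone}^{j}(C_n))\cong\mathcal{PG}^{(j)}(\mathrm{Cone}^{j-1}(C_n))$ telescopes to
\[
\mathcal{P}^{(m+1)}\mathrm{Cone}^{m}(C_n)\;\cong\;\mathcal{P}^{(2)}\mathrm{Cone}(C_n)\;\cong\;C_n,
\]
equivalently $\mathcal{P}^{(m)}\mathrm{Cone}^{m-1}(C_n)\cong C_n$ --- not the displayed $\mathcal{P}^{(m)}\mathrm{Cone}^{m}(C_n)\cong C_n$ that you claim to have reached. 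The displayed version is in fact false as literally written: for $m=1$ it would say that the line digraph of $\mathrm{Cone}(C_n)$ is $C_n$, but that digraph has $2n$ vertices, and in general $\mathrm{Cone}^{m}(C_n)$ has $n(m+1)$ ordered $(m+1)$-cliques rather than $n$. The version your argument proves is the one consistent with the base case (Proposition~\ref{prop:cone} is exactly its $m=2$ instance), so the discrepancy originates in the proposition's statement rather than in your lemma; but you should say explicitly that the iteration lands on $\mathcal{P}^{(m+1)}\mathrm{Cone}^{m}$ and that the exponent in the statement must be shifted by one, instead of asserting that the telescoping "gives the claimed isomorphism" as displayed.
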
 

We have seen that, for every $m$, the $m$-path digraph can be an oriented cycle. Cycles of ordered $n$-cliques have a rigid structure, as each subsequent element in the sequence is determined by the preceding one, and by the face maps:

\begin{remark}\label{rem:rigid}
Let $\sigma$ and $\sigma'$ be two $n$-simplices of~$\mathrm{dFl}(\GG)$, and assume $d_i(\sigma)=d_j(\sigma')$
for $i<j$, and~$n>0$. If $\sigma=[v_0,\dots,v_n]$, denote by $\sigma[h]$ the $h$-entry $v_h$ of $\sigma$. Then we have 
\[
\sigma'[h]=
\begin{cases}
\sigma[h] & \text{for } h<i,\ h> j \\
\sigma[h+1] & \text{for } i\leq h < j
\end{cases}
\]
and $\sigma'[j]$ is the  vertex in which $\sigma$ and $\sigma'$ differ.  
For example, assume $\sigma=[0,1,2,3]$ and $\sigma'=[1,4,2,3]$; then $d_0(\sigma)=d_1(\sigma')$ and here the face maps correspond to the indices $i=0$, and $j=1$. When $h=i=0$, we have $\sigma'[0]=\sigma[1]$, and $\sigma'[h]=\sigma[h]$ for $h=2,3$; on the other hand, when  $h=j$ (case $h=j=1$), we have $\sigma'[h]=4$, the  vertex in which $\sigma$ and $\sigma'$ differ.
\end{remark}

Such rigidity implies that taking $n$-path digraphs preserves acyclicity:

\begin{proposition}\label{prop:orcl}
	The $n$-path digraph $\mathcal{PG}^{(n)}$ of a digraph $\GG$ without oriented cycles does not have oriented cycles.
\end{proposition}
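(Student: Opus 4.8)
The plan is to construct a real-valued ``potential'' on the $n$-simplices of $\mathrm{dFl}(\GG)$ that strictly increases along every directed edge of $\mathcal{PG}^{(n)}$; the existence of such a monotone function immediately forbids oriented cycles. The case $n=0$ is trivial, since $\mathcal{PG}^{(0)}=\GG$ is acyclic by hypothesis, so I would assume $n\geq 1$.

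First I would fix a topological ordering of $\GG$. Because $\GG$ is finite and has no oriented cycles, there is a linear order $\prec$ on $V(\GG)$ with $x\prec y$ whenever $(x,y)\in E(\GG)$; let $r\colon V(\GG)\to\N$ be the associated rank. Note that every ordered clique $(v_0,\dots,v_k)$ of $\GG$ then satisfies $v_0\prec v_1\prec\cdots\prec v_k$, since $(v_a,v_b)\in E(\GG)$ for all $a<b$. I then define, for an $n$-simplex $\sigma$ of $\mathrm{dFl}(\GG)$, the quantity $S(\sigma)\coloneqq \sum_{v\in \mathrm{Vert}(\sigma)} r(v)$.

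The heart of the argument is to show that $S(\tau)>S(\sigma)$ whenever $(\sigma,\tau)$ is a directed edge of $\mathcal{PG}^{(n)}$. Such an edge means $d_i(\sigma)=d_j(\tau)$ for some $i<j$, and Remark~\ref{rem:rigid} describes $\tau$ explicitly in terms of $\sigma$: the vertex $u\coloneqq\sigma[i]$ is deleted and a single new vertex $w\coloneqq\tau[j]$ is inserted at position $j$, while the common face $d_i(\sigma)$ is untouched. Hence $\mathrm{Vert}(\tau)=(\mathrm{Vert}(\sigma)\setminus\{u\})\cup\{w\}$ and $S(\tau)-S(\sigma)=r(w)-r(u)$, so it suffices to prove $u\prec w$. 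Reading off the two ordered cliques via Remark~\ref{rem:rigid}: in $\tau$ the vertex $w=\tau[j]$ sits immediately after $\tau[j-1]=\sigma[j]$, so $\sigma[j]\prec w$; and in $\sigma$ the vertex $u=\sigma[i]$ sits immediately before $\sigma[i+1]$, so $u\prec\sigma[i+1]$. Since $i<j$ gives $i+1\le j$, the clique order of $\sigma$ yields $\sigma[i+1]\preceq\sigma[j]$, and concatenating these relations produces $u=\sigma[i]\prec\sigma[i+1]\preceq\sigma[j]\prec w$, whence $u\prec w$ and $r(u)<r(w)$.

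With strict monotonicity established, any oriented cycle $\sigma_0\to\sigma_1\to\cdots\to\sigma_m=\sigma_0$ in $\mathcal{PG}^{(n)}$ would force $S(\sigma_0)<S(\sigma_1)<\cdots<S(\sigma_m)=S(\sigma_0)$, a contradiction; hence $\mathcal{PG}^{(n)}$ has no oriented cycles. The one step needing care---and the place where acyclicity of $\GG$ is genuinely used, both to produce $\prec$ and to make the clique inequalities meaningful---is the chain $u\prec\sigma[i+1]\preceq\sigma[j]\prec w$: I must check that the indices $i+1$ and $j$ are legal (guaranteed by $i<j\le n$, so $\sigma[i+1]$ and $\sigma[j]$ exist), invoke Remark~\ref{rem:rigid} correctly to locate $u$ and $w$, and handle the boundary case $i+1=j$, where $\sigma[i+1]=\sigma[j]$ turns the middle inequality into an equality without affecting the conclusion $u\prec w$.
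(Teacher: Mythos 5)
Your proof is correct, but it takes a genuinely different route from the paper's. The paper argues by contradiction: starting from a hypothetical oriented cycle $\sigma_0\to\cdots\to\sigma_k$ in $\mathcal{PG}^{(n)}$, it uses Remark~\ref{rem:rigid} to extract, from the steps where the minimal face index occurs, an explicit closed sequence of edges $e_h=(\sigma_h[0],\sigma_h[1])$ in $\GG$ itself, contradicting acyclicity of $\GG$; this requires some bookkeeping over the indices $h_0,\dots,h_s$ where $i_h$ attains its minimum. You instead fix a topological order on $V(\GG)$ (which exists precisely because $\GG$ is finite and acyclic) and show that the potential $S(\sigma)=\sum_{v\in\mathrm{Vert}(\sigma)}r(v)$ strictly increases along every single edge of $\mathcal{PG}^{(n)}$, via the chain $\sigma[i]\prec\sigma[i+1]\preceq\sigma[j]=\tau[j-1]\prec\tau[j]$ read off from Remark~\ref{rem:rigid} and the ordered-clique structure. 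Your argument is entirely local (one edge at a time), avoids the global cycle bookkeeping, and the index checks you flag ($i+1\le j\le n$, the case $i+1=j$, and $w\neq u$ forced by $u\prec w$) all go through; what it costs is the auxiliary choice of a topological order, whereas the paper's construction produces a concrete witness cycle in $\GG$ directly from the hypothetical cycle in $\mathcal{PG}^{(n)}$. Both proofs lean on Remark~\ref{rem:rigid} as the key structural input.
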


\begin{proof}
	We proceed by contradiction.
	Assume  $\mathcal{PG}^{(n)}$ has an oriented cycle~$\gamma$ given by $n$-simplices 
	\begin{equation}\label{eq:cycle}
	  \sigma_0\to\sigma_1\to\dots\to\sigma_{k-1}\to\sigma_{k}
	\end{equation}
	of $\mathrm{dFl}(\GG)$. All the discussion below is given modulo $k$.
	
	The oriented cycle in Equation~\eqref{eq:cycle} corresponds to a closed path of ordered $(n+1)$-cliques of~$\GG$, and for each $\sigma_h\to\sigma_{h+1}$ there are indices $i_h$ and $j_{h}$ such that $d_{i_h}(\sigma_h)=d_{j_h}(\sigma_{h+1})$, with  $i_h<j_{h}$. Without loss of generality assume that $i_h=0$ for some $h$ -- otherwise let $i=\min\{i_h\mid h=0,\dots,k\}$ and replace in the discussion below the index~$0$ with such minimal index~$i$. 
	
	 Starting with the cycle~$\gamma$, it is now possible to construct oriented closed paths in $\GG$ as follows. First, as  $i_h=0$ for some $h$, then we have $\sigma_h[1]=\sigma_{h+1}[0]$ by Remark~\ref{rem:rigid}. Furthermore, as the simplex~$\sigma_h$ represents an ordered clique of $\GG$, this means that there is a directed edge~$e_h$ between the $0$-th and $1$-st entry of $\sigma_h$, i.e. $e_h\coloneqq (\sigma_h[0],\sigma_h[1])$, and we can see $e_h$ as an edge between the vertices $\sigma_h[0]$ and 	$\sigma_{h+1}[0]$. The idea is now to use these edges~$e_h$ to construct a cycle $\gamma_0$ in~$\GG$. To this goal, consider only the indices $h$ in $\{0,\dots,k\}$ for which~$i_h=0$, say~${h_0},\dots,{h_s}$. For all other indices~$r$, we have~$\sigma_r[0]=\sigma_{h_r+1}[0]$, where $h_r\coloneqq \min_{{h_0},\dots,{h_s}}\{h_j<r\}$. Then, starting with $v_0\coloneqq\sigma_0[0]=\sigma_{{h_0}}[0]$, we have the sequence of edges
	$$
	v_0=\sigma_{{h_0}}[0]\xrightarrow{e_{{h_0}}}\sigma_{{h_0}+1}[0]=\sigma_{h_1}[0]\xrightarrow{e_{{h_1}}}\sigma_{{h_1}+1}[0]\xrightarrow{}\dots\xrightarrow{e_{{h_s}}}\sigma_{{h_s}+1}[0]=\sigma_{{h_0}}[0]=v_0
	$$
	terminating again in $v_0$, as $\gamma$ was a cycle of simplices. But, this is not possible because $\GG$ has no oriented cycles, leading to a contradiction.
\end{proof}

In the next example we show that the directed structure inherited by the path digraphs is more informative than the undirected one. We
 apply the constructions of Definition~\ref{npgraph2} and   of Definition~\ref{npdigraph} to the digraphs shown in Example~\ref{expaths}:

\begin{example}\label{exHH2}
	Let $\GG_1$ and $\GG_2$ be the graphs of Example~\ref{expaths}.   
The $2$-path graphs $\mathcal{G}_1^{(2)}$ and $\mathcal{G}_2^{(2)}$ have three vertices, corresponding to the three $2$-simplices, and two edges, corresponding to the two edges in common; the obtained path graphs are isomorphic. 
On the other hand, the associated $2$-path \emph{digraphs} $\mathcal{PG}_1^{(2)}$ and $\mathcal{PG}_1^{(2)}$ are not isomorphic. In fact, we have
	\[			
	\mathcal{PG}_1^{(2)}\cong
	\begin{tikzpicture}[baseline=(current bounding box.center)]
		
		\tikzset{vertex/.style = { auto}}
		\tikzset{edge/.style = {->,> = latex'}}
		\node[vertex] (0) at  (-2,0) {$\bullet$};
		\node[vertex] (1) at  (0,0) {$\bullet$};
		\node[vertex] (2) at  (2,0) {$\bullet$};
		\draw[edge] (1) to (2);
	\end{tikzpicture}
	\]
	\[			
	\mathcal{PG}_2^{(2)}\cong
	\begin{tikzpicture}[baseline=(current bounding box.center)]
		
		\tikzset{vertex/.style = { auto}}
		\tikzset{edge/.style = {->,> = latex'}}
		\node[vertex] (0) at  (-2,0) {$\bullet$};
		\node[vertex] (1) at  (0,0) {$\bullet$};
		\node[vertex] (2) at  (2,0) {$\bullet$};
		\draw[edge] (1) to (2);
		\draw[edge] (0) to (1);
	\end{tikzpicture}
	\]
	 showing that the extension from 2-path graphs to 2-path digraphs provides additional non-trivial information.
\end{example}

We finish by establishing our main result concerning \(n\)-path digraphs, which is their functoriality.

\begin{theorem}\label{functoriality}
	For each $n$ in $\N$,  $$\mathcal{P}^{(n)}\colon \mathbf{DiGraph}\to\mathbf{DiGraph}, \quad \GG\mapsto \mathcal{PG}^{(n)}$$   is an endofunctor of the category  of directed graphs.
\end{theorem}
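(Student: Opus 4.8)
The plan is to extend $\mathcal{P}^{(n)}$ to morphisms using the simplicial maps already induced on directed flag complexes, and then to check that these preserve the extra incidence data defining the edges of the path digraph. First I recall from Remark~\ref{rem:digraph_morphism_on_cliques} that a morphism of digraphs $\phi\colon\GG_1\to\GG_2$ induces a simplicial map $f_\phi\colon\mathrm{dFl}(\GG_1)\to\mathrm{dFl}(\GG_2)$ sending an ordered clique $(v_0,\dots,v_k)$ to $(\phi(v_0),\dots,\phi(v_k))$. By Observation~\ref{obsmorgr} a digraph morphism sends cliques to cliques without collapsing, so the images $\phi(v_0),\dots,\phi(v_n)$ are pairwise distinct and $f_\phi$ carries $n$-simplices to $n$-simplices. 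I would then define $\mathcal{P}^{(n)}(\phi)$ to be the restriction of $f_\phi$ to the $n$-simplices, which are exactly the vertices of $\mathcal{PG}^{(n)}$.

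The key computational step is that $f_\phi$ commutes with the standard face maps on $n$-simplices: since $f_\phi$ acts entrywise and is injective on the vertices of each clique, one has $f_\phi(d_i(\sigma)) = d_i(f_\phi(\sigma))$ for every $n$-simplex $\sigma$ and every $i$. Granting this, suppose $(\sigma,\tau)$ is an edge of $\mathcal{PG}_1^{(n)}$, witnessed by an $(n-1)$-face $\alpha$ with $d_i(\sigma)=\alpha=d_j(\tau)$ and $i<j$ as in Definition~\ref{npdigraph}. Applying $f_\phi$ yields $d_i(f_\phi(\sigma)) = f_\phi(\alpha) = d_j(f_\phi(\tau))$ with the same indices $i<j$, so $f_\phi(\sigma)$ and $f_\phi(\tau)$ share the common $(n-1)$-face $f_\phi(\alpha)$ along a pair respecting the order condition.

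The step I expect to be the genuine obstacle is ruling out that $f_\phi(\sigma)=f_\phi(\tau)$: the target lives in $\mathbf{Digraph}$, whose digraphs have no self-loops, so an equality of endpoints would produce a forbidden loop rather than an edge. Here I would invoke the rigidity of Remark~\ref{rem:rigid}. From $d_i(\sigma)=d_j(\tau)$ with $i<j$ one reads off $\tau[i]=\sigma[i+1]$, while $(\sigma[i],\sigma[i+1])$ is an ordered edge of $\GG_1$ because $\sigma$ is an ordered clique. If $f_\phi(\sigma)=f_\phi(\tau)$, then comparing the $i$-th entries gives $\phi(\sigma[i])=\phi(\sigma[i+1])$, so $\phi$ would collapse the edge $(\sigma[i],\sigma[i+1])$, contradicting that $\phi$ is a morphism of digraphs. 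Hence $f_\phi(\sigma)\neq f_\phi(\tau)$ and $(f_\phi(\sigma),f_\phi(\tau))$ is a genuine directed edge of $\mathcal{PG}_2^{(n)}$, so $\mathcal{P}^{(n)}(\phi)$ is indeed a morphism of digraphs.

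It remains to verify functoriality and dispose of the boundary cases. Since $f_\phi$ is defined entrywise on vertices, $f_{\mathrm{id}}$ is the identity and $f_{\psi\circ\phi}=f_\psi\circ f_\phi$, so $\mathcal{P}^{(n)}$ preserves identities and composition. For $n=0$ the assignment is the identity on $\mathbf{Digraph}$ by convention, and in general no emptiness pathology arises, exactly as in Observation~\ref{linefunctor}: if $\GG_1$ has an $n$-simplex, its $f_\phi$-image is an $n$-simplex of $\GG_2$, so $\mathcal{PG}_1^{(n)}$ nonempty forces $\mathcal{PG}_2^{(n)}$ nonempty, while if $\mathcal{PG}_1^{(n)}$ is empty the induced map is the unique morphism out of the initial object. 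This establishes that $\mathcal{P}^{(n)}$ is an endofunctor of $\mathbf{Digraph}$.
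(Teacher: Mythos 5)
Your proposal is correct and follows essentially the same route as the paper's proof: both define $\mathcal{P}^{(n)}(\phi)$ via the induced map on ordered $(n+1)$-cliques from Remark~\ref{rem:digraph_morphism_on_cliques}, check that the $i<j$ incidence condition is preserved, and then confront the same crucial issue of whether two adjacent vertices of $\mathcal{PG}_1^{(n)}$ could be identified into a forbidden self-loop. Your resolution of that issue — using Remark~\ref{rem:rigid} to extract the edge $(\sigma[i],\sigma[i+1])$ that $\phi$ would have to collapse — is the contrapositive of the paper's observation that simplices mapping to the same image can only share a face via equal indices, and is if anything stated more explicitly than in the paper.
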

	
	\begin{proof}
	First, note that if $n=0$, then $\mathcal{P}^{(n)}$ is the identity functor by definition; if $n=1$, then, by Proposition~\ref{obs1pathareline}, $\mathcal{P}^{(n)}$ coincides with the line digraph functor (see Remark~\ref{linefunctor}).  
	
	Let now $n \geq 2$. We first observe that, if a digraph $\GG$ has no ordered $(n+1)$-cliques, then the associated $n$-path digraph is the empty digraph. 
	By Remark~\ref{obsmorgr}, a morphism of digraphs $\phi\colon \GG_1\to\GG_2$
	induces a function between the sets of $(n+1)$-cliques. As a consequence, if $\GG_1$ and $\GG_2$ are two digraphs and  the set of $(n+1)$-cliques of $\GG_2$ is empty, then the set $\mathrm{Hom}_{\mathbf{Digraph}}(\GG_1,\GG_2)$ of morphisms of digraphs between $\GG_1$ and $\GG_2$ is also empty. 
	By Remark~\ref{rem:digraph_morphism_on_cliques} we also have an induced function between the sets of ordered $(n+1)$-cliques of $\GG_1$ and $\GG_2$, that preserves the relative order of the faces. This function may not be surjective (there might be cliques that are not images of any clique in \(\GG_1\)) nor injective (as it may send different cliques to the same one).
	
	For a morphism of digraphs $\phi\colon \GG_1\to\GG_2$, we have gotten a function $\mathcal{P}^{(n)}(\phi)$ between the sets of vertices of the associated $n$-path digraphs. We now want to promote it to a morphism of $n$-path digraphs.
	To this end, let $c,c'$ be two ordered $(n+1)$-cliques of $\GG_1$, and let  $\sigma_c$ and $\sigma_{c'}$ be the associated $n$-simplices in  $\mathrm{dFl}(\GG_1)$. It may happen that  $\mathcal{P}^{(n)}(\phi)$ sends both $\sigma_c$ and $\sigma_{c'}$ to the same simplex of $\mathrm{dFl}(\GG_2)$. However, observe that if $\sigma_c$ and $\sigma_{c'}$ share an $(n-1)$-face $\tau$, and are sent to the same $n$-simplex of $\mathrm{dFl}(\GG_2)$, then the ordered $(n+1)$-cliques $c,c'$ share the face~$\tau$ such that there exists an index~$i$ with $d_i(\sigma_c) = d_i(\sigma_{c'})=\tau$ (as the linear ordering should be preserved).
	Therefore, if $\sigma_c$ and $\sigma_{c'}$
	are collapsed to the same vertex of  $\mathcal{PG}_2^{(n)}$, then $\sigma_c$ and $\sigma_{c'}$ represent two non-adjacent vertices of $\mathcal{PG}_1^{(n)}$. Furthermore, the relative incidence relations are preserved, and the morphism of digraphs $\phi\colon \GG_1\to\GG_2$ induces a function $\mathcal{P}^{(n)}(\phi)$  such that $(\mathcal{P}^{(n)}(\phi)(\sigma_c),\mathcal{P}^{(n)}(\phi)(\sigma_{c'}))\in \mathcal{PG}_2^{(n)}$ for every $(\sigma_c,\sigma_{c'})$ belonging to $\mathcal{PG}_1^{(n)}$, i.e. $\mathcal{P}^{(n)}(\phi)$ is a morphism of digraphs.
	
To conclude, it is now easy to see that $\mathcal{P}^{(n)}$ of the identity is the identity and that, if $\phi_1$ and $\phi_2$ are morphisms of digraphs, then $\mathcal{P}^{(n)}(\phi_1\circ\phi_2)=\mathcal{P}^{(n)}(\phi_1)\circ\mathcal{P}^{(n)}(\phi_2)$.   This shows that $\mathcal{P}^{(n)}$ is an endofunctor of the category $\mathbf{Digraph}$.
	\end{proof}

By Proposition~\ref{prop:orcl}, we get also the functoriality when restricting to acyclic digraphs:

\begin{proposition}
	 The funtor $\mathcal{P}^{(n)}$ restricts to a functor $\mathbf{DiGraph}_0\to\mathbf{DiGraph}_0$ on the category of finite digraphs without oriented cycles. 
\end{proposition}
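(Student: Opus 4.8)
The plan is to deduce the statement directly from the two results just established, namely the endofunctoriality of $\mathcal{P}^{(n)}$ on $\mathbf{Digraph}$ (Theorem~\ref{functoriality}) and the preservation of acyclicity (Proposition~\ref{prop:orcl}). Since $\mathbf{Digraph}_0$ is by definition the full subcategory of $\mathbf{Digraph}$ on the finite digraphs without oriented cycles, restricting a functor amounts to checking only that it sends objects and morphisms of $\mathbf{Digraph}_0$ back into $\mathbf{Digraph}_0$; preservation of identities and of composition is then inherited verbatim from Theorem~\ref{functoriality}.

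First I would handle the object level. Given $\GG$ in $\mathbf{Digraph}_0$, i.e.\ a finite digraph without oriented cycles, Proposition~\ref{prop:orcl} asserts precisely that $\mathcal{PG}^{(n)}$ again has no oriented cycles, while finiteness is immediate since the vertices of $\mathcal{PG}^{(n)}$ are the ordered $(n+1)$-cliques of $\GG$, a finite set. Hence $\mathcal{P}^{(n)}(\GG)$ is an object of $\mathbf{Digraph}_0$.

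Next, at the morphism level, let $\phi\colon\GG_1\to\GG_2$ be a morphism in $\mathbf{Digraph}_0$. Because $\mathbf{Digraph}_0$ is a full subcategory, $\phi$ is in particular a morphism in $\mathbf{Digraph}$, so Theorem~\ref{functoriality} produces a morphism of digraphs $\mathcal{P}^{(n)}(\phi)\colon\mathcal{PG}_1^{(n)}\to\mathcal{PG}_2^{(n)}$. By the previous paragraph both source and target lie in $\mathbf{Digraph}_0$, so $\mathcal{P}^{(n)}(\phi)$ is a morphism of $\mathbf{Digraph}_0$ as well. Finally, $\mathcal{P}^{(n)}(\id)=\id$ and $\mathcal{P}^{(n)}(\phi_1\circ\phi_2)=\mathcal{P}^{(n)}(\phi_1)\circ\mathcal{P}^{(n)}(\phi_2)$ hold as in Theorem~\ref{functoriality}, since composition in the subcategory is computed exactly as in $\mathbf{Digraph}$.

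There is no genuine obstacle at this stage: all the substance has already been carried out, the nontrivial input being Proposition~\ref{prop:orcl} (whose own proof rests on the rigidity of ordered cliques recorded in Remark~\ref{rem:rigid}) together with Theorem~\ref{functoriality}. The only point requiring care is the elementary bookkeeping that a functor restricts to a functor between full subcategories precisely when it preserves, on objects, the property defining the subcategory — which is exactly what the two cited results jointly guarantee.
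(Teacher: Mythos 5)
Your argument is correct and matches the paper's reasoning exactly: the paper states this proposition as an immediate consequence of Proposition~\ref{prop:orcl} (preservation of acyclicity) combined with Theorem~\ref{functoriality}, which is precisely the deduction you carry out, with the added (harmless) bookkeeping about full subcategories made explicit.
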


\begin{remark}\label{rem:relax}
The idea behind Definition~\ref{npdigraph} corresponds to the intuition that flows in a directed graph follow the direction of the edges, from the source to the target. As we have remarked in Lemma~\ref{obs1pathareline}, the source and target of a directed edge are given by the face maps $d_1$ and $d_0$, respectively. The condition  $i<j$ in the construction of the path digraph follows and generalizes this principle to the higher simplices as well. This condition can be relaxed to $i\leq j$, which has the effect that the path digraphs might have reciprocal edges, and therefore oriented cycles.
\end{remark}

As described in Section~\ref{secflcompl}, one of the possible approaches to an homology theory of digraphs is given by the ordinary homology of the associated directed flag complexes; recall that this is constructed by using the set of ordered cliques in a digraph. When applied to $1$-path digraphs, we have the following consequence:

\begin{remark}
   Let $\GG$ be a digraph without oriented cycles. Then,  the directed flag complex $\mathrm{dFl}(\mathcal{PG}^{(1)}(\GG))=\mathrm{dFl}(\mathcal{L}(\GG))$ of the $1$-path digraph has simplices of dimension at most $1$. 
\end{remark}

The above remark is not true for $n$-path digraphs. For example,  it is easy to see that the $2$-path digraphs may contain $3$-cliques, and as a consequence the associated directed flag complexes can possibly be of dimension at least $2$:

\begin{example}\label{ex:2clfl}
Consider the digraph $\GG$ on five vertices with directed edges as follows:
	\begin{center}
		\begin{tikzpicture}
			\tikzstyle{arc}=[shorten >= 3pt, red, shorten <= 3pt,->, thick]
			
			\node[] (0) at  (0,0) {0};
			\node[] (1) at  (0,1.5) {1};
			\node[] (2) at  (1.5,0) {2};
			\node[] (3) at  (1.5,1.5) {$3$};
			\node[] (4) at  (2.5,2) {4};
			
			\draw[arc] (0) to (1);
			\draw[arc] (0) to (2);
			\draw[arc] (1) to (3);
			\draw[arc] (1) to (2);
			\draw[arc] (2) to (3);
			\draw[arc] (1) to[bend left] (4);
			\draw[arc] (4) to[bend left] (2);
	\end{tikzpicture}
	\end{center}
	Then, $\GG$ contains three ordered cliques (corresponding to the simplices $[0,1,2]$, $[1,2,3]$ and $[1,4,2]$. The boundary relations show that the associated $2$-path digraph is the ordered clique on three vertices, and the associated directed flag complex is a $2$-simplex.
	\end{example}
	
{In the case of \(q\)-connectivity the homotopy types of the connectivity digraphs can be studied through the order complex construction, recall Example \ref{ex:q_graphs_spheres}.} Analogously, it is then natural to ask what is the dimension of the directed flag complex of an $n$-path digraph, and what is its homotopy type: 

\begin{question}\label{q:htypef}
For a given digraph $\GG$, what is the homotopy type of the directed flag complex associated to $\mathcal{PG}^{(n)}$, or to the relaxed $n$-path digraph of Remark~\ref{rem:relax}? What are the distributions of the associated Betti numbers like?
\end{question}
A partial answer to this is given in \cite{Riihimaki_simplicial_connectivities} when \(\GG\) is the 1-skeleton of so called pseudomanifold; for example, the Cone(\(C_n\)) in Figure \ref{fig:polycone} is an example of a directed 2-pseudomanifold. In this case the directed flag complexes of the connectivity digraphs we have considered are 1-dimensional. The digraph in Example \ref{ex:2clfl} does not have the structure of a 2-pseudomanifold: there is a "singular" edge (1,2) to which three different 2-simplices are attached.
We conclude the section with examples showing that the digraphs of Example~\ref{expaths} and Example~\ref{exspheres} can now be distinguished by using the homology groups associated to the $2$-path digraphs:

\begin{example}\label{exHHdigraps}
Consider the digraphs $\GG_1$ and $\GG_2$ illustrated in Example~\ref{expaths}.
	The associated $2$-path digraph $\mathcal{PG}_1^{(2)}$ -- cf.~Example~\ref{exHH2} -- has two connected components, whereas the $2$-path digraph~$\mathcal{PG}_2^{(2)}$  has one connected component. 
	Then all the homology theories described in Section~\ref{homologies} can now distinguish the two digraphs.
\end{example}

\begin{example}\label{exspheresHH}
	Consider the  digraphs $\mathcal{S}_1$ and $\mathcal{S}_2$ of Example~\ref{exspheres}.  
	The associated $2$-path digraph~$\mathcal{PS}_1^{(2)}$ is  the following:
	\[
		\begin{tikzpicture}[baseline=(current bounding box.center)]
			\tikzset{vertex/.style = {minimum size=1.5em}}
			\tikzset{edge/.style = {->,> = latex'}}
			
			\node[vertex] (0) at  (0,1.5) {$\bullet$}; 
			\node[vertex] (1) at  (2,1.5) {$\bullet$}; 
			\node[vertex] (2) at  (0,0) {$\bullet$}; 
			\node[vertex] (3) at  (2,0) {$\bullet$}; 
			\draw[edge] (0) to (2) ;
			\draw[edge] (1) to (3);
			\draw[edge] (0) to[bend left] (1);
			\draw[edge] (1) to[bend left] (0);
			\draw[edge] (2) to[bend left] (3);
			\draw[edge] (3) to[bend left] (2);
		\end{tikzpicture}
	\]
	The digraph $\mathcal{PS}_2^{(2)}$, instead, is a disconnected digraph:
	\begin{center}
		\begin{tikzpicture}[baseline=(current bounding box.center)]
			\tikzset{vertex/.style = {minimum size=1.5em}}
			\tikzset{edge/.style = {->,> = latex'}}
			
			\node[vertex] (0) at  (0,1.7) {$\bullet$}; 
			\node[vertex] (1) at  (2,1.7) {$\bullet$}; 
			\node[vertex] (2) at  (0,0) {$\bullet$}; 
			\node[vertex] (3) at  (2,0) {$\bullet$}; 
			\draw[edge] (0) to[bend left] (1);
			\draw[edge] (1) to[bend left] (0);
			\draw[edge] (2) to[bend left] (3);
			\draw[edge] (3) to[bend left] (2);
		\end{tikzpicture}
	\end{center}
	with two connected components. Again, all the homology theories described in Section~\ref{homologies} can now distinguish the two digraphs representing 2-spheres.
\end{example}

\section{Persistent Hochschild homology of digraphs}\label{sec:extendedhomoloies}

The goal of this section is to introduce a persistent homology framework for Hochschild homology of directed graphs, using connectivity digraphs as an intermediate step.
The reason why we restrict our analysis to Hochshild homology is simple. As observed in Remark~\ref{obs:flagsforgetdir}, the ordinary homology of (the directed flag complex associated to) a directed graph loses some information given by the directionality. On the other hand, path homology groups in degree $0$ and $1$ are strictly related to the homology of the directed flag complex, and in degrees $i\geq 2$ are of difficult computation. 

One of the disadvantages of Hochschild homology for digraphs is that it is trivial in degrees $i\geq 2$. The use of connectivity digraphs is meant to solve this issue. We first show that the $n$-path digraphs introduced in Section~\ref{secnpathgr} allow us to construct a persistent homology functorially in the case of acyclic digraphs. We then extend the persistence pipeline to general digraphs; we lose functoriality but we obtain a new persistence descriptor for digraphs.

\subsection{Persistent Hochschild homology of acyclic digraphs}\label{sec:PHH}                                 
In this subsection we mainly follow~\cite{vertechi}, where an abstract categorical framework in which to develop persistent homology theories has been introduced. In this framework, one replaces filtrations of topological spaces with filtrations in an arbitrary category (for us, the category of directed graphs) and the homology functors with any functor with values in a regular ranked category (for us, Hochschild homology over a field~$\K$), compare with \cite[Table~1]{vertechi}.

We start by considering  the poset $(\bR, \leq)$ of real numbers with the  induced natural partial order $\leq$. The poset $(\bR, \leq)$ can  be seen as a category in a standard way, as	every poset can be  seen as a category: the category $\mathbf{P}$ associated to a poset $P=(S,\leq)$ has the set $S$ as a collection of objects and a (unique) morphism  $x\to y$ for any $x\leq y$.  A morphism of posets is then a functor between them, equivalently an order-preserving map of posets.

In persistent homology applications, one usually considers diagrams of spaces indexed by the natural numbers, or more generally by the real numbers. These diagrams are referred to as filtrations:

\begin{definition}
	A (real-indexed) \emph{filtration} in a category $\mathbf{C}$ is a functor $\mathcal{F}\colon(\bR, \leq)\to \mathbf{C}$.
\end{definition}

\begin{remark}\label{rem:tameness}
    Following \cite{vertechi} we will always consider \emph{tame} filtrations. Essentially, a filtration \(\mathcal{F}\) is tame if there is a finite sequence \(\{t_i\}_{i \in \N}\) such that \(\mathcal{F}(a) \ra \mathcal{F}(b)\) may fail to be an isomorphism only if \(a < t_i \leq b\) for some \(i\). The concept of tameness extends to subposets of \((\bR, \leq)\).
\end{remark}

\begin{example}
Let $\{\GG_n\}_{n\in \N}$ be a family of digraphs, with $\GG_n\to\GG_{n+1}$ a morphism of digraphs for each $n\in \N$. Then, the family  $\{\GG_n\}_{n\in \N}$ yields a filtration $(\N, \leq) \ra \mathbf{Digraph}$; if we assume $\GG_n$ to be without oriented cycles, the filtration takes values in $\mathbf{Digraph}_0$.  Observe that if $\{\GG_n\}_{n\in \N}$ is a family of subgraphs of a given directed graph $\GG$, then by Remark~\ref{rem:tameness} the resulting filtration is tame.
\end{example}

Let $\mathcal{F}$ be a filtration in $\mathbf{Digraph}_0$. In the following discussion, we restrict to the $n$-path digraph functor~$\mathcal{PG}^{(n)}$, but everything is the same by replacing $\mathcal{PG}^{(n)}$ with any functorial construction of connectivity digraphs. By Theorem~\ref{functoriality}, composition with the $n$-path digraph functor~$\mathcal{PG}^{(n)}$ induces, for each $n$ in $\N$, a filtration in  $\mathbf{Digraph}_0$; by Proposition~\ref{prop:orcl}, the $n$-path digraph of a digraph without oriented cycles does not have oriented cycles. We then get the following composition of functors:
\[
(\bR, \leq)\xrightarrow{\mathcal{F}}\mathbf{Digraph}_0\xrightarrow{\mathcal{P}^{(n)}}\mathbf{Digraph}_0 \ .
\]
Let $\mathbf{FinVect}$ be the category of finite dimensional vector spaces over an algebraically closed field~$\K$.  By Remark~\ref{obs:HHfunctorfin}, the Hochschild homology groups yield  functors with values in $\mathbf{FinVect}$. 

\begin{remark}
The category $\mathbf{FinVect}$, equipped with the dimension function assigning to a vector space its dimension, is a ranked category -- cf.~\cite[Definition~2.1]{vertechi}.
\end{remark}

Before putting all together, we need the definition of a persistence function, generalizing persistent Betti numbers from classical persistent homology:
 
 \begin{definition}\cite[Definition~3.2]{vertechi}\label{def:cat_pers_func}
 	Let $\mathbf{C}$ be a category. An integer-valued lower-bounded function $p$ on the morphisms of $\mathbf{C}$ is a \emph{categorical persistence function} if, for all $u_1\to u_2\to v_1 \to v_2$ the following hold:
 	\begin{enumerate}
 		\item $p(u_1\to v_1)\leq p(u_2\to v_1)$ and $p(u_2\to v_2)\leq p(u_2\to v_1)$;
 		\item $p(u_2\to v_1)- p(u_1\to v_1)\geq p(u_2\to v_2)- p(u_1\to v_2)$.
 	\end{enumerate}
 \end{definition}
 
 Let $\mathcal{F}\colon (\bR , \leq) \to\mathbf{Digraph}_0$ be a filtration, and consider the following composition of functors:
\begin{equation}\label{eq:digraph_filtration}
(\bR, \leq) \xrightarrow{\mathcal{F}}\mathbf{Digraph}_0\xrightarrow{\mathcal{P}^{(n)}}\mathbf{Digraph}_0\xrightarrow{\K-} \K\text{-}\mathbf{Alg}\xrightarrow{\mathrm{HH}_1}\mathbf{FinVect}
\end{equation}
By \cite[Proposition~3.6]{vertechi}, any functor $\mathbf{C}\to \mathbf{FinVect}$ yields a categorical persistence function. In order to get an analogue of persistent Betti numbers usually associated to pairs of real numbers, it is then sufficient to have a filtration and a categorical persistence function \cite[Remark~3.8]{vertechi}. In our context,  $\mathcal{F}\colon (\bR , \leq) \to\mathbf{Digraph}_0$ is a filtration, and the composition of functors in~\eqref{eq:digraph_filtration}
yields the categorical persistence function. Furthermore, for each pair of real numbers $u\leq v$, Hochschild homology gives the linear maps
\[\mathrm{HH}_0(\K\mathcal{PG}_u^{(n)})\to \mathrm{HH}_0(\K\mathcal{PG}_v^{(n)}) \ \text{ and  } \ \mathrm{HH}_1(\K\mathcal{PG}_u^{(n)})\to \mathrm{HH}_1(\K\mathcal{PG}_v^{(n)})\]
of finite dimensional vector spaces. By taking the images of these maps, we get the desired persistent Betti numbers:

\begin{definition}
	The $(n,1)$-persistent Betti number of a filtration in $\mathbf{Digraph}_0$ is the persistent Betti number induced by $\mathrm{HH}_1(\K\mathcal{PG}_u^{(n)})\to \mathrm{HH}_1(\K\mathcal{PG}_v^{(n)})$, in dimension $1$. The $(n,0)$-persistent Betti number for dimension $0$ is defined analogously.
\end{definition}

\begin{remark}
Note that degrees of Hochschild homology are only 0 and 1. The higher "homological" degrees \(n\), and the \(n\)-th Betti numbers, are defined by the connectivity digraphs of \(n\)-simplices. This is in our view the lifting of Hochschild homology beyond degree 1.
\end{remark}

In this functorial framework, given a categorical persistence function $p$ and a (tame) filtration $\mathcal{F}$, it is possible to define a persistence diagram~$D\mathcal{F}$ as well ~\cite{vertechi}. In the case of real-indexed filtrations, the morphisms \(u \leq v \in \bR\) are in bijection with the positive half-plane \(\Delta^+ = \{(u,v) \in \bR^2 \ | \ u \leq v\}\). We therefore get an induced persistence function \(p_\mathcal{F} \colon \Delta^+ \ra \Z\) given by \((u,v) \mapsto p(\mathcal{F}(u \leq v))\). For \(u < v\) we define the multiplicity \(\mu(u,v)\) as
\[\min_{I_u,I_v}\{p_\mathcal{F}(\text{sup}(I_u), \text{inf}(I_v)) - p_\mathcal{F}(\text{inf}(I_u), \text{inf}(I_v)) - p_\mathcal{F}(\text{sup}(I_u), \text{sup}(I_v)) + p_\mathcal{F}(\text{inf}(I_u), \text{sup}(I_v))\},\]
where \(I_u\) and \(I_v\) range over disjoint connected neighborhoods of \(u\) and \(v\). The persistence diagram \(D\mathcal{F}\) associated to \(p_\mathcal{F}\) is then defined by those points \((u,v)\) such that \(\mu(u,v) > 0\), together with the diagonal $\{(u,u)\mid u\in [0,\infty)\}$ \cite[Definition~6]{vertechi}. Note that for small enough neighborhoods \(I_u\) and \(I_v\) we have \(\text{inf}(I_u) \ra \text{sup}(I_u) \ra \text{inf}(I_v) \ra \text{sup}(I_v)\), and the above minimized expression is exactly that of condition 2. in Definition \ref{def:cat_pers_func} with strict inequality.

We get an immediate stability theorem, in fact an isometry theorem, between our Hochschild homology valued filtrations and their persistence diagrams. Let $d_{\mathbf{FinVect}}(\mathcal{F},\mathcal{G})$ be the interleaving distance between two filtrations $\mathcal{F},\mathcal{G} \colon (\bR, \leq) \ra \mathbf{FinVect}$ and let $d(D\mathcal{F},D\mathcal{G})$ be the bottleneck distance between the associated persistence diagrams.

\begin{theorem}\label{thm:stability}
	Let $\mathcal{F,G}\colon (\bR, \leq)\to\mathbf{Digraph}_0$ be two filtrations of digraphs. Then,
	\[
	d_{\mathbf{FinVect}}(\mathrm{HH}_1\circ \K\mathcal{P}^{(n)}\circ\mathcal{F},\mathrm{HH}_1\circ \K\mathcal{P}^{(n)}\circ\mathcal{G})= d(D\mathcal{F},D\mathcal{G})
	\]
	where $D\mathcal{F}$ and $D\mathcal{G}$ represent the persistence diagrams associated to $\mathcal{F}$ and $\mathcal{G}$.
\end{theorem}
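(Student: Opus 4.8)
The plan is to recognize that the whole pipeline assembles into a single functor valued in the ranked category $\mathbf{FinVect}$, so that the statement becomes a direct instance of the abstract isometry theorem underlying the categorical persistence framework of~\cite{vertechi}. Writing $H \coloneqq \mathrm{HH}_1 \circ (\K-) \circ \mathcal{P}^{(n)}$, I would first record that $H$ is a well-defined functor $\mathbf{Digraph}_0 \to \mathbf{FinVect}$: functoriality of $\mathcal{P}^{(n)}$ is Theorem~\ref{functoriality} together with its preservation of acyclicity (Proposition~\ref{prop:orcl}), functoriality of $\K-$ is Observation~\ref{obsHHfunctor}, and the finite-dimensionality of the values, together with functoriality of $\mathrm{HH}_1$, is Observation~\ref{obs:HHfunctorfin}. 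Consequently $H \circ \mathcal{F}$ and $H \circ \mathcal{G}$ are functors $(\bR, \leq) \to \mathbf{FinVect}$, i.e.\ real-indexed persistence modules of finite-dimensional vector spaces, and these are precisely the objects whose interleaving distance $d_{\mathbf{FinVect}}$ appears on the left-hand side.

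The second step is to check that these composites remain tame, so that the constructions of~\cite{vertechi} apply. Since functors preserve isomorphisms, the map $H\mathcal{F}(a \leq b)$ can fail to be an isomorphism only where $\mathcal{F}(a \leq b)$ already does; hence the finite set of critical values witnessing tameness of $\mathcal{F}$ (Remark~\ref{rem:tameness}) also witnesses tameness of $H \circ \mathcal{F}$, and likewise for $\mathcal{G}$. The third step identifies the persistence diagrams: by~\cite[Proposition~3.6]{vertechi} the functor $H \circ \mathcal{F}$ induces a categorical persistence function $p_{\mathcal{F}}$, given on $u \leq v$ by the rank $\dim \mathrm{im}\bigl(H\mathcal{F}(u) \to H\mathcal{F}(v)\bigr)$, and $D\mathcal{F}$ is the diagram attached to $p_{\mathcal{F}}$ through the multiplicity formula recalled above (analogously for $D\mathcal{G}$). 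In particular $D\mathcal{F}$ and $D\mathcal{G}$ are, by construction, the persistence diagrams of the $\mathbf{FinVect}$-valued modules $H \circ \mathcal{F}$ and $H \circ \mathcal{G}$.

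With the hypotheses in place, the final step is to invoke the isometry theorem of~\cite{vertechi}, which for tame filtrations valued in a (regular) ranked category equates the interleaving distance of two such modules with the bottleneck distance of their persistence diagrams; applied to $H \circ \mathcal{F}$ and $H \circ \mathcal{G}$ this gives the asserted equality. I expect the only genuine obstacle to be conceptual bookkeeping rather than a new estimate: one must be careful that $d_{\mathbf{FinVect}}$ is read as the interleaving distance of the $\mathbf{FinVect}$-valued composites (the digraph filtrations themselves carry no such distance), and that the diagram $D\mathcal{F}$ attached to the digraph filtration is \emph{defined} to be that of its image $H \circ \mathcal{F}$. Once this identification is made explicit, the equality follows with no further computation beyond citing the isometry result; the substantive analytic content lives entirely in~\cite{vertechi}, and our contribution is verifying that the Hochschild pipeline satisfies its hypotheses.
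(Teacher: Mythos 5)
Your proposal is correct and follows essentially the same route as the paper, which simply invokes the isometry results of~\cite{vertechi} (Theorems~3.27 and~3.29); your additional bookkeeping --- verifying functoriality of the composite pipeline via Theorem~\ref{functoriality}, Proposition~\ref{prop:orcl}, and Observations~\ref{obsHHfunctor} and~\ref{obs:HHfunctorfin}, checking tameness, and identifying $D\mathcal{F}$ with the diagram of the $\mathbf{FinVect}$-valued composite --- is exactly the hypothesis-checking the paper leaves implicit. No discrepancy to report.
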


\begin{proof}
The proof follows directly from \cite[Theorem~3.27 \& 3.29]{vertechi}.
\end{proof}

The theorem says that the persistent Hochschild homology groups associated to a filtration of digraphs are stable, one of the required properties in persistent homology applications.  
We refer to this as \emph{persistent Hochschild homology}. Note that in the terminology of \cite{Bubenik_metrics} we have introduced a generalised persistence along with a \emph{hard stability theorem}: the map from persistence modules to discrete invariants is 1-Lipschitz. This still leaves open the (very hard) problem of proving a stable persistent Hochschild homology pipeline starting from the space of (acyclic) digraphs. The main obstacle here lies in the difficulty of having an appropriate metric for digraphs that behaves well with filtrations. 

Note that persistent Hochschild homology does not behave as the usual persistent homology; in fact, we have the following:
\begin{remark}\label{rem:Hochclasses}
Consider an edge weighted directed graph $\GG$, and consider the filtration induced by sorting the weights in an increasing order -- the first digraph in the sequence being the spanning subgraph on the vertices of $\GG$. Then, generators in persistent Hochschild homology applied to such filtration always persist until $\infty$. Therefore we cannot talk about births and deaths as is usually done in the context of barcodes. 
\end{remark}

Similarly, as for Hochschild homology,  we can consider compositions 
\[
(\bR, \leq) \xrightarrow{\mathcal{F}}\mathbf{Digraph}\xrightarrow{\mathcal{P}^{(n)}}\mathbf{Digraph}\xrightarrow{\mathrm{Ch}\circ\mathrm{dFl}} 
\mathbf{Ch}\xrightarrow{\mathrm{H}_n} \mathbf{Ab}
 \]
involving the homology of directed flag complexes, or the path homology functors. For $n=0$, these compositions  yield the usual persistence homology in the first case, and the persistent path homology introduced in~\cite{inbook_Chowdhury}, in the second. Henceforth, composition with the higher connectivity digraphs allows us to extend the usual classical pipelines. 

\subsection{A persistent Hochschild homology pipeline for directed graphs}\label{secHHpipe}

As seen in the previous section, Hochschild homology gives rise to a persistence function when considering directed graphs without oriented cycles; however, this approach fails with graphs having oriented cycles, due to Proposition~\ref{prop:HH finite}. We now introduce a persistence-like framework for Hochschild homology that extends our set-up to the whole category $\mathbf{Digraph}$. 

Our pipeline proceeds as follows:
\begin{enumerate}
    \item We start with a filtration~$\mathcal{F}\colon (\bR , \leq) \to\mathbf{Digraph}$.
    \item At each filtration step \(t\) we obtain a connectivity digraph \(E\mathcal{F}_t\) by applying the $n$-path digraph $\mathcal{P}^{(n)}$ for some \(n\), or the \(q\)-graph for some \((q,\widehat{d_i},\widehat{d_j})\).
    \item The resulting digraphs \(E\mathcal{F}_t\) can in general have oriented cycles. We therefore consider the condensation $c(E\mathcal{F}_t)$ (Definition~\ref{def:condensation}).
    \item    By Remark~\ref{rem:condensation wo cycles}, $c(E\mathcal{F}_t)$ does not have oriented cycles. Hence, we compute the Hochschild characteristic $\mathcal{X}_{\mathrm{HH}}(c(E\mathcal{F}_t)) = \dim_k \mathrm{HH}_0(A)-\dim_k \mathrm{HH}_1(A)$, where $A$ is the associated path algebra. Note that $\dim_k \mathrm{HH}_0(A)$ agrees with the number of connected components and $\dim_k \mathrm{HH}_1(A)$ with the formula 	$1-n +\sum_{e\in E(\GG)}\dim_k e_{t(e)}A e_{s(e)}$ of Theorem~\ref{thmhapp}. As the digraph $c(E\mathcal{F}_t)$ does not have oriented cycles, this is exactly the characteristic introduced in Definition~\ref{HHX}.
\end{enumerate}
Diagrammatically, we have:
\begin{equation}\label{eq:compvar}
    (\bR, \leq) \xrightarrow{\mathcal{F}}\mathbf{Digraph}\xrightarrow{E}\mathbf{Digraph}\xrightarrow{c}\mathbf{Digraph}_0\xrightarrow{k-} k\text{-}\mathbf{Alg}\xrightarrow{\mathcal{X}_{\mathrm{HH}}}\mathbf{FinVect} \ .
\end{equation}

 Note that the process lands in the category of finite vector spaces, but the composition is not functorial anymore; we refer to the discussion after Remark \ref{rem:condensation wo cycles}.
\begin{remark}
Observe that taking the condensation of a digraph and the connectivity digraphs do not commute. In particular, killing the cycles in the condensation process may kill also ordered cliques; this is the reason why we first apply $E$ and then the condensation $c$.
\end{remark}

We demonstrate our persistent Hochschild homology pipeline, by applying it to the filtrations of the following digraphs:
\begin{itemize}
    \item Random Erd\"{o}s–Rényi (ER) digraph with probability 0.5 for directed edges between any pair of vertices. We make it randomly edge weighted by replacing each non-zero entry of the adjacency matrix with a value sampled uniformly from \([0,1)\).
    \item Random necklace digraph, as represented in Figure~\ref{fig:necklace}. Similarly, to make it random we first construct the associated adjacency matrix, which has ones on the first upper and lower diagonals, and we then replace these with a value sampled uniformly from \([0,1)\).
    	\begin{figure}[h]
		\centering
		\begin{tikzpicture}[baseline=(current bounding box.center)]
			\tikzset{vertex/.style = {minimum size=1.5em}}
			\tikzset{edge/.style = {->,> = latex',red,thick}}
			
			\node[vertex] (0) at  (0,0) {$\bullet$};
			\node[vertex] (1) at  (1.5,0) {$\bullet$};
			\node[vertex] (2) at  (3,0) {$\bullet$};
			\node[vertex] (4) at  (4.5,0) {...};
			\node[vertex] (5) at  (6,0) {$\bullet$};
			\node[vertex] (6) at  (7.5,0) {$\bullet$};
			\node[vertex] (7) at  (9,0) {$\bullet$};
			\draw[edge] (0) to[bend left] (1);
			\draw[edge] (1) to[bend left] (0);
			\draw[edge] (2) to[bend left] (4);
			\draw[edge] (4) to[bend left] (2);
			\draw[edge] (1) to[bend left] (2);
			\draw[edge] (2) to[bend left] (1);
			\draw[edge] (4) to[bend left] (5);
			\draw[edge] (5) to[bend left] (4);
			\draw[edge] (5) to[bend left] (6);
			\draw[edge] (6) to[bend left] (5);
			\draw[edge] (6) to[bend left] (7);
			\draw[edge] (7) to[bend left] (6);
		\end{tikzpicture}
		\caption{The necklace digraph.}
		\label{fig:necklace}
	\end{figure}
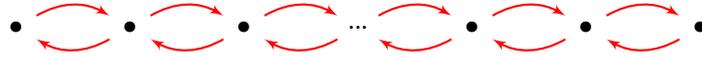
\end{itemize}
In both examples, we take the digraph filtration induced by the random entries of the associated adjacency matrices: at a filtration value \(t\) we take the digraph induced by keeping only edges whose weight is \(\leq t\).  The digraphs we used had 20 vertices, so were represented by $20 \times 20$ adjacency matrices.

Figure \ref{fig:kg} shows the persistent Hochschild characteristics for the random Erd\"{o}s–Rényi and  necklace digraphs, and for their associated 1-path digraphs. 
Figure \ref{fig:q_graph_PHH} shows the results for \(q\)-graphs with \((q,\widehat{d_i},\widehat{d_j})\) equal to \((q,\widehat{d_0},\widehat{d}_{q+1})\) for \(q \in \{1,2,3,4\}\) for the random Erd\"{o}s–Rényi, and \((0,\widehat{d_0},\widehat{d_1})\) for the necklace digraph. 

As observed in Remark~\ref{rem:Hochclasses}, generators in persistent Hochschild homology always persist until $\infty$, and therefore the associated Betti curves yield monotone functions. Experiments involving the persistent Hochschild characteristics of random ER digraphs, and more structured necklace digraphs -- \emph{cf.}~Figure~\ref{fig:kg} -- show instead that the associated curves are not monotone. This is caused by the condensation step in the pipeline introduced in this section. 
Condensation also has the effect of reducing the number of edges and paths in the graph, a number that is correlated to the first Hochschild homology group -- \emph{cf.}~Theorem~\ref{thmhapp}. 
The effect of condensation is seen in the plots as zig-zagging; particularly large positive jumps correspond to large cyclic components being killed.

Note that a common feature in all plots is that the early parts of the filtrations are dominated by the connected components. This occurs until a certain saturation point in which more structured graphs appear and the number of edges, paths, and of oriented cycles, is more prominent. It is interesting to note that this saturation point is reached very soon when dealing with random digraphs and much later for the necklace digraphs. Essentially this is observed in the plots when the value of $\mathcal{X}_{\mathrm{HH}}$ drops to negative. The plots for ER \(q\)-digraphs also begin slightly positive before more paths begin to dominate dropping the values very negatively. Exception is the ER (4,0,5)-digraph: due to the required high-dimensional connecting faces the digraphs are predominantly rather empty of edges and dominated by connected components.

When oriented cycles are more likely to be created with long paths, the variations in $\mathcal{X}_{\mathrm{HH}}$ are stronger. Compare this effect on the persistent Hochschild characteristics of random ER digraphs and of necklace digraphs in Figure \ref{fig:kg}. Necklace digraphs, perturbed with addition of white noise, present small cycles, so that the persistent Hochschild characteristics is changing almost linearly. Changing the associated connectivity digraph may change completely the behaviour of $\mathcal{X}_{\mathrm{HH}}$. Note the change in the persistent Hochschild characteristics associated to the line digraphs of the same random and necklace digraphs. 

Finally, we remark that the $\mathcal{X}_{\mathrm{HH}}$ of the \(q\)-digraphs in Figure \ref{fig:q_graph_PHH} show drastically larger values compared to Figure \ref{fig:kg}. Recall that these digraphs have as vertices all the simplices of dimension \(\geq q\). The simplicial face inclusions are also always near resulting in edges (Definition~\ref{def:q_near_directed}). Therefore the \(q\)-digraphs are larger and more dominated by paths along the filtration. This is particularly visible in the plot for ER (2,0,3)-digraph.

\begin{figure}[h!]
	\begin{center}
	     \begin{subfigure}[b]{0.49\textwidth}
         \centering
		\includegraphics[width=1.0\columnwidth]{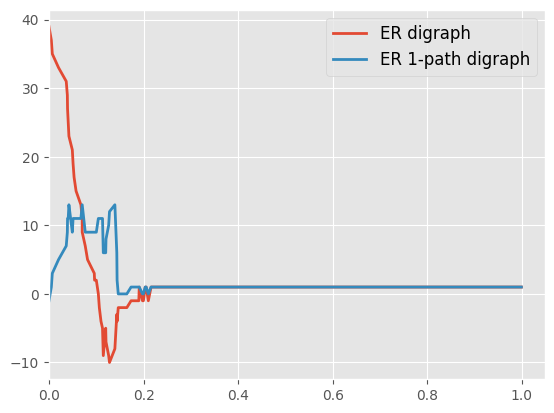}
		     \end{subfigure}
     \hfill
     \begin{subfigure}[b]{0.49\textwidth}
         \centering
		\includegraphics[width=1.0\columnwidth]{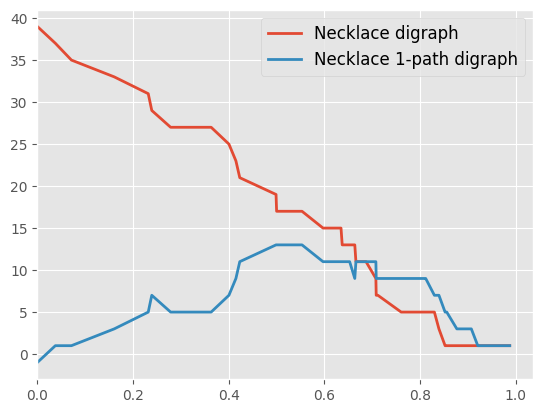}
		\end{subfigure}
	\end{center}
	\caption{Persistent Hochschild characteristics. Left: condensed digraph and condensed $1$-path digraph of a random Erd\"{o}s–Rényi. Right: condensed digraph and condensed $1$-path digraph of a random necklace digraph. Both digraphs had 20 vertices.}
	\label{fig:kg}
\end{figure}

\begin{figure}[h!]
	\begin{center}
	     \begin{subfigure}[b]{0.49\textwidth}
         \centering
		\includegraphics[width=1.0\columnwidth]{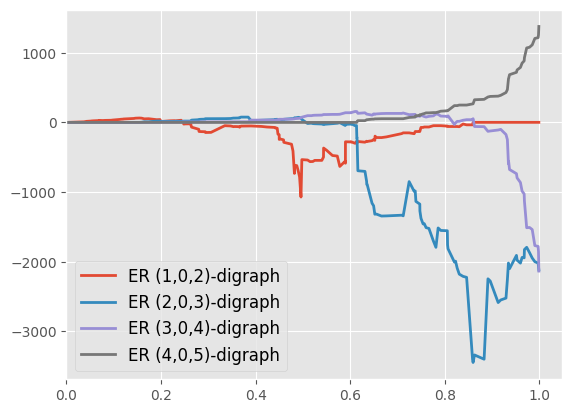}
		     \end{subfigure}
     \hfill
     \begin{subfigure}[b]{0.49\textwidth}
         \centering
		\includegraphics[width=1.0\columnwidth]{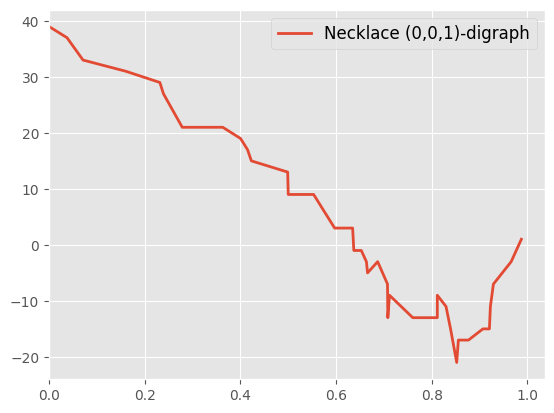}
		\end{subfigure}
	\end{center}
	\caption{Persistent Hochschild characteristics. Left: condensed \(q\)-digraphs of a random Erd\"{o}s–Rényi. Right: condensed \(q\)-digraph of a random necklace digraph. The choices for \((q,\widehat{d_i},\widehat{d_j})\) are shown in the figures. Both digraphs had 20 vertices. }
	\label{fig:q_graph_PHH}
\end{figure}

As already mentioned, the pipeline we have introduced uses the condensation of a digraph to kill the oriented cycles. By producing acyclic digraphs, this also has an effect in the computational efficiency since many graph algorithms, for example finding paths, have lower complexity; this was taken advantage of in \cite{Riihimaki_simplicial_connectivities} in computing \(q\)-connected pathways of simplices. Other approaches are possible, for example in \cite[Algorithm 1]{kaul}  a Berger and Shor algorithm~\cite{10.5555/320176.320203} has been used for the same task. Using condensation leads to our pipeline not being functorial, and we do not know if the persistent Hochschild characteristic defined this way is stable in the sense of Theorem~\ref{thm:stability}; this leaves open the following question:
 
\begin{question}
Is it possible to modify the composition of Equation~\eqref{eq:compvar} in a functorial way? Is the composition of Equation~\eqref{eq:compvar} stable in the sense of Theorem~\ref{thm:stability}?
\end{question}

\bibliographystyle{plain}
\bibliography{PersistentHH}

\end{document}